\definecolor{halfgray}
{gray}{0.55}
\definecolor{webgreen}
{rgb}{0,0.4,0}
\definecolor{webbrown}
{rgb}{.8,0.1,0.1}
\definecolor{red}
{rgb}{1,0,0}
\newcommand \R {{ \mathbb R}}
\newcommand \Q {{ \mathbb Q}}
\newcommand \C {{ \mathbb C}}
\newcommand \Z {{ \mathbb Z}}
\newcommand \N {{ \mathbb N}}
\newcommand \T {{ \mathbb T}}
\newcommand \re {{%
\operatorname{Re}
}}
\newcommand \im {{%
\operatorname{Im}
}}
\newcommand{\SL}{%
\operatorname{SL}
}
\newcommand{\GL}{%
\operatorname{GL}
}
\newcommand{\<}{{\langle}} 
\renewcommand{\>}{{\rangle}}
\newtheorem{theorem}{Theorem}[section]
\newtheorem {lemma} [theorem]{Lemma}
\newtheorem {proposition}[theorem]{Proposition}
\newtheorem{remark}[theorem]{Remark}
\newtheorem{claim}[theorem]{Claim}
\newtheorem{definition}[theorem]{Definition}
\newtheorem{conjecture}[theorem]{Conjecture}
\newtheorem{exercise}[theorem]{Exercise}
\newtheorem{problem}[theorem]{Problem}
\title[Effective unique ergodicity and weak mixing of translation flows]%
{Effective unique ergodicity \\ and weak mixing of translation flows}
  \author{Giovanni Forni}
\address{Department  of Mathematics\\
  University of Maryland \\
  College Park, MD USA}
\email
    {gforni@math.umd.edu}
\keywords
      {Translation surfaces and flows, weak mixing}
\subjclass[2010]
        {37C10, 37E35, 37A20, 30E20, 31A20.}
\date{\today}
\begin{document}
 
   \maketitle
   \tableofcontents

\section{Introduction}
 
\noindent Translation surfaces and translation flows were introduced in the study of the dynamics of billiards in rational polygons and interval exchange transformations (IET's). Later the renormalization theory of translation flows and IET's provided a fundamental tool for the study of the dynamics of smooth locally Hamiltonian (area-preserving) flows on higher genus surfaces with singularity of saddle type.

In these notes, after reviewing basic definitions and results in the ergodic theory of translation flows, we outline a cohomological approach, based on Hodge theory, to the basic effective ergodic properties of translation flows: polynomial unique ergodicity and polynomial weak mixing.  

We emphasize how this point of view allows for a largely unified treatment to the theory of unique ergodicity and weak mixing of translation flows (and related dynamical systems).  Methods of Hodge theory were introduced 
in the subject in the seminal papers of Kontsevich and Zorich \cite{Ko97}, \cite{KZ97}, developed in the work of the author on effective unique ergodicity starting in \cite{F02} (and in a different direction by M.~M\"oller \cite{M\"o06}),  and perfected in the more recent work of S.~Filip  \cite{Fi16a}, \cite{Fi16b}, \cite{Fi17}.  

The power of the Hodge theoretic approach consist in formulas
and results about the Lyapunov exponents of a linear cocycle (the so-called Kontsevich--Zorich cocycle) which on the one hand encodes  the topological  (homological) behavior of trajectories of translation flows, and on the other hand is related to the tangent cocycle of the  Teichm\"uller flow on the moduli space of Abelian differential. Results on the Lyapunov structure of the Kontsevich--Zorich cocycle are therefore fundamental for both the ergodic theory of translation flows and for the theory of the Teichm\"uller flow and of the related $\SL(2,\R)$ action  (see for instance \cite{ABEM12}, \cite{EM18}). 

Similar methods of Hodge theory (for the twisted cohomology) were first introduced in the author's work on effective weak mixing \cite{F22a}, and have provided a new point of view on earlier work on mixing (in particular a new cohomological proof of a fundamental weak mixing criterion of Veech \cite{Ve84}), as well as formulas and estimates onthe Lyapunov exponents of a ``twisted cohomology cocycle''  which lead to bounds on the speed of weak mixing of typical translation flows.  A similar linear
cocycle was introduced in the work of A.~Bufetov and B.~Solomyak \cite{BS14}, \cite{BS18}, \cite{BS20}, \cite{BS21} on H\"older
property of spectral measures which motivated our own, although the Bufetov-Solomyak cocycle differs as it is not cohomological  
and the analysis of its Lyapunov spectrum is based on a different method (Erd\"os-Kahane argument), not related to Hodge theory.

These notes are thus based mostly on  \cite{F02}, \cite{AtF08}, \cite{FM14} (and also \cite{F06}, \cite{FMZ12})  for unique ergodicity and on~\cite{AvF07}, \cite{F22a} for the weak mixing property of translation flows. The cohomological proof of the Veech criterion for weak mixing was so far unpublished. 

There are many excellent surveys on interval exchange transformations, translation surfaces and flows, Veech surfaces, polygonal billiards, dynamics on moduli space, which can complement these notes, for instance (the list is not exhaustive): \cite{Fi},  \cite{MT02}, \cite{HS06}, \cite{Zo06}, \cite{Yo10}, \cite{FM14}, \cite{Wr15b}.  However, with the exception of \cite{FM14} (which
does not cover weak mixing) none of the above sources covers applications of Hodge theory to the ergodic theory of translation flows.

The notes are an expanded version of lectures given at the CIME school {\it Modern Aspects of Dynamical Systems} in Cetraro, Italy, in August 2021. 

\noindent I am very grateful to the organizers Claudio Bonanno, Alfonso Sorrentino and Corinna Ulcigrai for the opportunity to lecture there.

 \section{Background: definitions and results} 
 
 \subsection{Polygonal Billiards}
 A billiard in an euclidean (planar) polygon $P$ is a dynamical systems defined by the motion of a point mass on the billiard table
 with undergoes ``elastic'' collisions at the boundary edges (according to the reflection law of geometric optics: angle of incidence
 equal angle of reflection).  We refer to the survey of H.~Masur and S.~Tabachnikov \cite{MT02} for an introduction to billiards in
 polygons and translation surfaces and flows.
 
\noindent  We note that the phase space of the polygonal billiard in $P$, restricted to an energy surface, can be take to be $P \times S^1$ since the dynamics on all energy surfaces are isomorphic and the velocity can be taken to be of unit norm, hence an element of $S^1= \{ v\in \R^2 \vert  \Vert v \Vert =1\}$.  We also note that the dynamics of a polygonal billiard is not defined (for 
 all times) for all trajectories that end up in corners, hence it is defined almost everywhere (on the complement of the union of a countable set of lines in the phase space). 
 
 \noindent Polygonal billiards in right triangles are famously related to the motion of two point masses on an interval with elastic collisions between the masses and at the endpoints.
 
 \begin{exercise} Prove that the Hamiltonian system given by  point masses $m_1$ and $m_2$ on the interval $[0,1]$ with elastic
 collisions between the masses and between the masses and the endpoints of the interval is isomorphic to the Hamiltonian system
 given by a right triangle billiard with an angle $\tan^{-1} \sqrt{m_1/m_2}$.
 \end{exercise}

\medskip
\noindent The {\it unfolding construction } of Zemlyakov and Katok \cite{KZ75} uncovers a fundamental dichotomy in the dynamics of polygonal billiards between the so-called {\it rational billiards} and {\it non-rational billiards}. The geometric idea of the construction is to continue a trajectory as a straight line beyond the reflection at an edge by reflecting the table with respect to that edge. 
It replaces therefore the motion with reflections on the billiard table by a straight line flow on a surface endowed with a flat
(indeed a translation) structure. The dichotomy appears since this (minimal) unfolding surface can be a a closed (finite genus) orientable surface (rational case) or a non-closed, infinite surface (non-rational case). Indeed, the unfolding surface is given by glueing  (by translations) along boundary edges of  a number of copies of the given table $P$ equal to the cardinality of the group $G_P$ generated by reflections with respect to lines (through he origin) parallel to edges of $P$. 

\begin{definition} The table $P$ is called rational if and only if the group $G_P$ has finite order. If $P$ is simply connected
the condition that $G_P$ has finite order is equivalent to the condition that all angles of $P$ are rational multiples of $\pi$.
\end{definition} 

\noindent If $P$ is rational, the billiard flow in $P$ is equivalent to the straight line flow on a {\it translation surface}, that is, on
a closed orientable surface $M_P$ with a well-defined horizontal (and vertical) directions (away from finitely many conical points). The straight line flow on a translation surface has a first integral: the angle between the unit tangent vector and the horizontal direction. It follows that the phase space $P\times S^1$  is foliated by invariant surfaces (level surfaces of the prime integral) which are isomorphic to $M_P$ (as translation surfaces).  

\noindent In other terms, the angle of the unit tangent vector with the horizontal on the original table, although not invariant since the trajectory undergoes reflections at the boundary edges, is invariant modulo the action of $G_P$  on $S^1$. If $G_P$ is finite (rational case),
then the angle defines a prime integral with values in the interval $S^1/G_P$, hence the phase space of the billiard flow is foliated
by invariant surfaces. Billiards in rational polygons are a main example of {\it pseudo-integrable}  Hamiltonian systems. 

\smallskip 
\noindent An alternative construction  going back to R.~Fox and R.~Kershner~\cite{FK36} consists in forming the double $S_P$ of the table $P$ (given by two copies of $P$ glued along corresponding edges), which is a flat sphere with finitely many conical singularities with angle which are twice the angle of $P$. 

\medskip
\noindent If all angles are rational multiple of $\pi$, it is possible to consider a branched cover of $S_P$ with suitable branching orders at the cone points to get a higher genus surface endowed with a flat metric with trivial holonomy (that is, with well defined horizontal and vertical directions) and, in particular, with cone singularities of total angles which are {\it integer multiples of }$2\pi$. 

\smallskip
\noindent We conclude this brief introduction to billiards in polygons with the formula for the genus of the unfolding surface of a rational polygonal billiard.

\begin{proposition} (see for instance \cite{MT02})  Let $G_P$ be as above the group generated by all reflections with respect to 
the edges of the rational  polygon $P$. Let $N_P :=  \# G_P/2$ (since $G_P$ is a dihedral group, it has even order). Let 
$\{ \pi m_i  /n_i \vert  i=1, \dots, \sigma\}$ denote the (rational) angles of $P$. Then the genus $g_P$ of $M_P$ is given by the formula:
$$
g_P = 1 + \frac{N_P}{2} ( \sigma -2 - \sum_{i=1}^\sigma \frac{1}{n_i} )\,.
$$
\end{proposition}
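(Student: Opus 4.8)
I would prove this by computing the Euler characteristic of $M_P$ from its natural cell decomposition and then using $\chi(M_P)=2-2g_P$. Recall that $M_P$ is assembled from $\#G_P=2N_P$ isometric copies of $P$, one for each $g\in G_P$, glued in pairs along parallel edges: the edge of the copy labelled by $g$ lying in a line $\ell$ through the origin parallel to a side of $P$ is identified, by the translation matching the two edges, with the corresponding edge of the copy labelled by $r_\ell g$, where $r_\ell\in G_P$ is the reflection in $\ell$. This presents $M_P$ as a CW complex whose $2$-cells are the $2N_P$ copies of $P$, so $F=2N_P$; and since $r_\ell g\neq g$, no copy is glued to itself and the $2N_P\sigma$ edges of the copies are matched in disjoint pairs, so $E=N_P\sigma$.

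The substantive step is the count of $0$-cells, which amounts to understanding the local structure of $M_P$ at the conical points lying over each vertex of $P$. Fix the vertex $v_i$, with interior angle $\pi m_i/n_i$, and let $r_a,r_b\in G_P$ be the reflections in the two lines parallel to the sides of $P$ at $v_i$. Starting from any copy and alternately crossing the two edges emanating from $v_i$, one visits the copies labelled by $g,\ r_ag,\ r_br_ag,\ r_ar_br_ag,\dots$; because $\gcd(m_i,n_i)=1$, the product $r_ar_b$ is a rotation of order exactly $n_i$, the subgroup $\langle r_a,r_b\rangle$ is dihedral of order $2n_i$, and this cyclic list of copies closes up after precisely $2n_i$ steps through $2n_i$ distinct copies. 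Hence each conical point of $M_P$ over $v_i$ is surrounded by exactly $2n_i$ corners and has cone angle $2n_i\cdot(\pi m_i/n_i)=2\pi m_i$; the $2N_P$ corners at $v_i$ thus split into $N_P/n_i$ such points (note $n_i\mid N_P$, since the cyclic rotation subgroup of $G_P$, of order $N_P$, contains the order-$n_i$ rotation $r_ar_b$), and summing over the $\sigma$ vertices gives $V=\sum_{i=1}^{\sigma}N_P/n_i$.

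Putting the pieces together, Euler's formula yields
$$
\chi(M_P)=V-E+F=\sum_{i=1}^{\sigma}\frac{N_P}{n_i}-N_P\sigma+2N_P=N_P\Bigl(2-\sigma+\sum_{i=1}^{\sigma}\frac1{n_i}\Bigr),
$$
and then $\chi(M_P)=2-2g_P$ gives exactly $g_P=1+\frac{N_P}{2}(\sigma-2-\sum_{i=1}^{\sigma}1/n_i)$. One could reach the same displayed identity via Gauss--Bonnet, $\sum_p(2\pi-\theta_p)=2\pi\chi(M_P)$, feeding in the cone angles $\theta_p=2\pi m_i$ found above together with the angle-sum relation $\sum_i m_i/n_i=\sigma-2$ for a simple $\sigma$-gon; or via Riemann--Hurwitz for the degree-$N_P$ branched cover $M_P\to S_P$ onto the flat double of $P$, ramified to order $n_i$ over the cone point coming from $v_i$. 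The only genuinely delicate point is the local vertex analysis of the second paragraph --- correctly identifying the dihedral subgroup that governs the gluing around $v_i$, pinning down its order, and checking that the $2n_i$ copies it produces are distinct $2$-cells of $M_P$ rather than fewer; granting that, the remainder is bookkeeping.
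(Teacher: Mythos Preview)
Your argument is correct. Note, however, that the paper does not actually supply its own proof of this proposition: it is stated with a reference to \cite{MT02} and then immediately assigned as an exercise. So there is no ``paper's proof'' to compare against beyond the intended standard argument, and your Euler-characteristic computation via the natural CW structure on $M_P$ is precisely that standard argument. The face and edge counts are immediate, and your vertex analysis---identifying the dihedral subgroup $\langle r_a,r_b\rangle$ of order $2n_i$ acting freely on the $2N_P$ corners over $v_i$---is the one genuinely substantive step, handled correctly (the coprimality $\gcd(m_i,n_i)=1$ is exactly what pins the order of the rotation $r_ar_b$ to $n_i$). The alternative routes you sketch (Gauss--Bonnet on the flat cone metric, or Riemann--Hurwitz for the branched cover $M_P\to S_P$) are also valid and lead to the same computation.
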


 \begin{exercise}  
 Prove the above formula and find all the completely integrable polygons (characterized by $g_P=1$).
 \end{exercise}

\subsection{Moduli space of translation surfaces}
\label{subsec:Mod_spaces}
 The unfolding construction motivated the introduction of the notion of a translation surface.
 
 \begin{definition} The following definitions of a {\bf translation surface} are equivalent :
 \begin{itemize}
 \item A closed orientable surface endowed with a translation structure, that is, endowed with an equivalence class of atlases such that coordinate changes are given by translations of the plane;
 \item A closed orientable surface which can be obtained by glueing planar polygons along boundary edges such that all glueing maps are translations;
 \item  A closed orientable surface endowed with a flat metric with finitely many cone singularities (of total angles integer  multiples 
 of  $2\pi$) and trivial holonomy (that is, such that the holonomy representation of the fundamental group of the complement of the cone points
 is trivial);
 \item  A closed orientable surface endowed with a complex structure, hence a Riemann surface $M$, and a holomorphic differential $h$ on $M$;
 \item A closed orientable surface endowed with a pair $\{X,Y\}$ of transverse vector fields defined on the complement of finitely many points $\{p_1, \dots, p_\sigma\}$ such that $[X,Y]=0$  and such that, for every $i\in \{1, \dots, \sigma\}$ there exists $k_i\in \N$
 and a complex coordinate $z_i$ defined on a neighborhood $U_i$ of $p_i$ such that 
 $$
 X =  \re ( z_i^{-k_i} \frac{ \partial }{\partial z_i} )  \quad \text{ and } \quad  Y=- \im( z_i^{-k_i} \frac{ \partial }{\partial z_i} )  \quad \text{ on } \, U_i\,.
 $$
  \end{itemize}
 The vector fields $X$ and $Y$ are called respectively the horizontal and vertical vector fields of the translation surface.  
\end{definition} 
 
\begin{exercise}  
 Prove the equivalence of the above definitions.
\end{exercise} 
 
\noindent  Linear flows on translation surfaces are called {\it translation flows}:

 \begin{definition} 
 A {\bf translation flow} on a translation surface  $M$ is the flow (defined almost everywhere, outside the union of countable many lines)  generated by a vector field $V$ which is parallel with respect to the flat metric. All translation flows are generated by linear combinations with constant coefficients of the horizontal and vertical vector fields of the translation surface.
 \end{definition}  
 
 \noindent We are interested in the ergodic theory of the ``typical'' translation flow. A precise notion of a typical translation flow can
 be immediately given on every given translation surface since linear flows for normalized generators are naturally parametrized 
 by their angle with the horizontal direction and the set of angles is naturally endowed endowed with the (normalized) Lebesgue measure.  However, most ergodic properties have been proved, at least initially for ``typical'' translation surfaces. This notion
 requires the introduction of finitely many parameters on the space of all translation surfaces and of natural measures on such a
 space, thereby providing motivation for the introduction of a {\it Teichm\"uller space} and a  {\it moduli space} of translation surfaces.
 
\begin{definition}  The moduli space $\mathcal H_g$  of translation surfaces of genus $g\geq 1$ is the quotient of the space of all translation structures on a given smooth closed orientable surface $S$ of genus $g$ under the action of the group $\text{ \rm Diff}^\infty (S)$ of all smooth diffeomorphisms of the surface $S$.  The Teichm\"uller space $\hat{ \mathcal H}_g$ of translation surfaces of genus $g\geq 1$ is the quotient of the space of all translation structures on a given smooth closed orientable surface $S$ of genus $g$ under the action of the subgroup $\text{ \rm Diff}^\infty_0(S)$ of all smooth diffeomorphisms isotopic to the identity.  
Consequently, the moduli space $\mathcal H_g$ can be naturally identified to the quotient $\hat{ \mathcal H}_g/\Gamma_g$ of the
Teichm\"uller space $\hat{ \mathcal H}_g$ under the action of the mapping class group $\Gamma_g :=\text{ \rm Diff}^\infty(S)/\text{ \rm Diff}^\infty_0(S)$. 
\end{definition} 
 
 \noindent Taking into account the equivalent definition of a translation surface as the data of a holomorphic differential on a Riemann surface,
 it can be proved that the Teichm\"uller and moduli spaces of translation surfaces can be respectively identified with finite dimensional vector bundles (of dimension $g$) over  the classical Teichm\"uller and moduli spaces of Riemann surfaces.  This connection allows for methods of Riemann surface theory to be extended and applied to the study of translation surfaces. In particular, the various extensions and refinements of the Deligne-Mumford compactification of the moduli space of Riemann surfaces
 have played a crucial role in the theory of translation surfaces \cite{Ma76}, \cite{MW17}, \cite{BCGGM18} .

 \medskip 
 \noindent The Teichm\"uller and moduli spaces of translation structures are {\it stratified} according to the pattern of multiplicity
 of zeroes of Abelian differentials, or equivalently or total cone angles. For every $ \kappa= (\kappa_1, \dots, \kappa_\sigma)
 \in \N^\sigma$, such that $\sum_{i=1}^\sigma k_i = 2g-2$, the subsets of translation surfaces of genus $g\geq 1$ 
 with a set of cone points 
 of cardinality $\sigma \geq 1$ and multiplicities $\kappa$ is invariant under the action of the diffeomorphism group, hence 
 it descends to a subset  $\hat{\mathcal H}_\kappa \subset  \hat{\mathcal H}_g $  of the Teichm\"uller space and
 to a subset  ${\mathcal H}_\kappa \subset  {\mathcal H}_g$  of the moduli space, called a {\it stratum}. 
 
 \noindent We summarize below several fundamental structures on strata of  Teichm\"uller and moduli spaces of translation structures:
 
 \begin{itemize} 
 \item A function ${\mathcal A}_\kappa: \hat{\mathcal H}_\kappa \to \R^+$ which gives the total area of the flat metric of the translation surface; the function ${\mathcal A}_\kappa$ is invariant under the mapping class group hence is well-defined on 
 the stratum ${\mathcal H}_\kappa$ of the moduli space;
 \item An atlas of affine charts given by the so-called period maps 
 $$
 \hat {\mathcal H}_\kappa \to  H^1(M, \Sigma; \C)\,,
 $$
 with values in the relative cohomology (with complex coefficients) of the surface $M$ relative to a set of $\Sigma=\{p_0, \dots, p_{\sigma-1}\}$ of cone points; the period maps are locally defined on the stratum ${\mathcal H}_\kappa$ of the moduli space
 since the mapping class group acts properly discontinuously on $\hat {\mathcal H}_\kappa$;
 \item A measure class defined on ${\mathcal H} _\kappa$,  and on  $\hat{\mathcal H}_\kappa$, by the pull-back of the Lebesgue measure class on the euclidean space $H^1(M, \Sigma, \C)$  via the local coordinates given by the period map;
 \item A natural action of the Lie group $\GL(2, \R)$ on the stratum $\hat{\mathcal H}_\kappa$, which descends to an action
 on the stratum ${\mathcal H}_\kappa$ since the action of  $\GL(2, \R)$ and of the mapping class group $\Gamma_g$ commute.
\end{itemize}  
 The period maps can be defined  as the (locally defined) map into the relative cohomology with complex coefficients
 $$
 (M, h) \to  [h] \in H^1 (M, \Sigma, \C)
 $$
 or more explicitly as follows: let $\{a_1, b_1, \dots, a_g, b_g, \gamma_1, \dots, \gamma_{\sigma-1}\}$
 denote a standard basis of the relative homology $H_1(M, \Sigma, \Z)$ (recall that $\{a_1, b_1, \dots, a_g, b_g\}$ is a canonical
 basis of the homology and $\{\gamma_1, \dots, \gamma_{\sigma-1}\}$ is  a set of relative cycles joining, for instance, $p_0$
 to $p_1, \dots, p_{\sigma-1}$ respectively). The the period map can be defined as follows:
 $$
 (M, h) \to   (\int_{a_1} h, \int_{b_1} h,  \dots, \int_{a_g} h, \int_{b_g} h, \int_{\gamma_1} h, \dots, \int_{\gamma_{\sigma-1} } h) \in \C^{2g+\sigma-1} \,.
 $$
 The action of the group $\GL(2,\R)$ on $\mathcal H_\kappa$ (hence on $\mathcal H_g$) can be defined as follows:
on translation structures given as the data of a translation atlas, the group $\GL(2, \R)$ acts by post-composition of all charts (which are maps with values in $\R^2$) by a given element of the group;  on translation structures as the data of planar polygons glued along the edges by translations,  the group $\GL(2, \R)$ acts by the natural action on $\GL(2, \R)$ on planar polygons;  on translation structures given as the data $(M,h)$  of a Riemann surface $M$ and a holomorphic differentials $h$, the action
can be described as follows: for all $A\in \GL(2, \R)$, 
$$
A\cdot (M, h) =   (M_A, h_A)    \quad \text{ with }    \begin{pmatrix} \re( h_A) \\   \im(h_A)\end{pmatrix}  =
(A^t)^{-1}   \begin{pmatrix} \re(h)  \\   \im(h) \end{pmatrix} = 
$$
and $M_A$ the unique Riemann surface such that $h_A$ is holomorphic on $M_A$; on translation structures given as the data 
$(X,Y)$ of a pair of infinitesimally commuting vector fields, the action can be described as follows: for all $A=(a_{ij})\in \GL(2, \R)$, 
\begin{equation}
\label{eq:GL_action}
A\cdot (X, Y) =   (X_A, Y_A)    \quad \text{ with }    \begin{pmatrix} X_A \\  Y_A \end{pmatrix}  =
A  \begin{pmatrix} X \\   Y\end{pmatrix}  =   \begin{pmatrix} a_{11} X + a_{12} Y  \\  a_{21} X + a_{22} Y  \end{pmatrix} 
\end{equation}
Since dilations of the translation structure do not affect the dynamical properties of its translation flows, it is natural to restrict
consideration to the subset of translation surfaces of unit total area:
$$
\hat {\mathcal H}^{(1)} _\kappa   = \mathcal A_\kappa^{-1} \{1 \} = \hat {\mathcal H} _\kappa/ \R^+   \quad \text{ and } \quad  {\mathcal H}^{(1)} _\kappa:= \hat {\mathcal H}^{(1)} _\kappa / \Gamma_g\,.
$$

\begin{exercise} Prove that by Riemann bilinear relations the subset $\hat {\mathcal H}^{(1)} _\kappa \subset \hat {\mathcal H} _\kappa  $ is given locally
by a quadratic equation with respect to period coordinates.
\end{exercise} 

\noindent It can be verified that the area function $\mathcal A_\kappa$ is invariant under the action of $\SL(2,\R) <\GL(2, \R)$, hence the action of $\GL(2, \R)$ on $\hat {\mathcal H} _\kappa$ introduced above induces an action of $\SL(2,\R)$ on 
$\hat {\mathcal H}^{(1)} _\kappa$,  that descends to an action of $\SL(2,\R)$ on ${\mathcal H}^{(1)} _\kappa$. 

\smallskip 
\noindent The sub-actions of the above action of $SL(2, \R)$ on the union ${\mathcal H}^{(1)} _g$ of all strata ${\mathcal H}^{(1)} _\kappa$ given by the subgroups $g_\R$ and $h^\pm_\R$, defined (for $t\in \R$) as
\begin{equation}
\label{eq:subgroups}
g_t = \begin{pmatrix} e^t & 0 \\ 0 & e^{-t}   \end{pmatrix}   \quad \text{ and } \quad  h^+_t = \begin{pmatrix} 1& t \\ 0 & 1   \end{pmatrix}
\,, \quad  h^-_t = \begin{pmatrix} 1& 0 \\ t & 1   \end{pmatrix}
\,, 
\end{equation}
are known as the {\it Teichm\"uller geodesic flow} $g_\R$ (introduced by H.~Masur \cite{Ma82}), and the unstable and stable 
{\it Teichm\"uller horocycle flows} $h^\pm_\R$ (introduced in \cite{Ma85}).

\medskip
\noindent A fundamental theorem of H.~Masur \cite{Ma82} and W.~Veech \cite{Ve82} establishes for every stratum the existence
of a probability absolutely continuous measure in the Lebesgue measure class. 

 \begin{theorem} (\cite{Ma82}, \cite{Ve82}) For every stratum $\mathcal H_\kappa$, there exists a 
 unique $\GL(2, \R)$-invariant measure $\mu_\kappa$ on $\mathcal H_\kappa$,  which belongs to  the Lebesgue measure class and projects  to a probability  $\SL(2, \R)$-invariant measure $\mu^{(1)}_\kappa$ on $\mathcal H^{(1)}_\kappa$.  
  \end{theorem}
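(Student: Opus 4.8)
The plan is to construct $\mu_\kappa$ by hand in period coordinates, to read off its invariance from the fact that the $\GL(2,\R)$-action is \emph{linear} there, and then---this being the substantive point---to prove that the induced measure on the unit-area locus has \emph{finite} total mass. For the construction I would work first on the Teichm\"uller cover $\hat{\mathcal H}_\kappa$ and pull back, through the period charts $\hat{\mathcal H}_\kappa\to H^1(M,\Sigma;\C)\cong\C^{2g+\sigma-1}$, the Lebesgue measure of $\C^{2g+\sigma-1}$. Any two period charts differ by the action on $H^1(M,\Sigma;\Z)$ of an element of the mapping class group $\Gamma_g$, which is an integral automorphism of the relative cohomology lattice, hence lies in $\GL(2g+\sigma-1,\Z)$ and has determinant $\pm1$; so the pulled-back measure is independent of the chart, it defines a locally finite $\Gamma_g$-invariant measure on $\hat{\mathcal H}_\kappa$, and---since $\Gamma_g$ acts properly discontinuously---it descends to a locally finite measure $\mu_\kappa$ on $\mathcal H_\kappa$, lying by construction in the Lebesgue measure class.

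For invariance, in period coordinates the $\GL(2,\R)$-action of \eqref{eq:GL_action} is the tensor action $\mathrm{Id}_{H^1(M,\Sigma;\R)}\otimes(A^t)^{-1}$ on $H^1(M,\Sigma;\R)\otimes\R^2$, whose Jacobian determinant is $(\det A)^{-(2g+\sigma-1)}$; this equals $1$ on $\SL(2,\R)$, so $\mu_\kappa$ is $\SL(2,\R)$-invariant while the dilation subgroup merely rescales it. The area function $\mathcal A_\kappa$ is, by the Riemann bilinear relations, a quadratic expression in the \emph{absolute} periods, positive on the stratum, and homogeneous of degree $2$ under the overall scaling $h\mapsto\rho h$; hence $\{\mathcal A_\kappa=1\}$ is a smooth hypersurface and $\hat{\mathcal H}_\kappa\cong\hat{\mathcal H}^{(1)}_\kappa\times\R^+$. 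Disintegrating $\mu_\kappa$ along $\mathcal A_\kappa$, i.e.\ writing $d\mu_\kappa=\tfrac12\,t^{\,2g+\sigma-2}\,dt\,d\mu^{(1)}_\kappa$ with $t=\mathcal A_\kappa$, produces the ``cone'' measure $\mu^{(1)}_\kappa$, which is $\SL(2,\R)$-invariant (both $\mu_\kappa$ and $\mathcal A_\kappa$ are) and descends to $\mathcal H^{(1)}_\kappa$; the $\GL(2,\R)$-invariant measure in the Lebesgue class of the statement is then $\mathcal A_\kappa^{-(2g+\sigma-1)}\mu_\kappa$, equivalently $\mu^{(1)}_\kappa$ tensored with the multiplicative Haar measure $dt/t$ on the area parameter, and its normalization is fixed by requiring $\mu^{(1)}_\kappa$ to be a probability measure.

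The remaining, crucial point is finiteness: $\mu^{(1)}_\kappa(\mathcal H^{(1)}_\kappa)<\infty$, equivalently (by the cone relation) $\mu_\kappa(\{0<\mathcal A_\kappa\le1\})<\infty$, i.e.\ control of the Lebesgue volume near the ``boundary at infinity'' of moduli space, where the translation surface degenerates (a saddle connection or a closed geodesic becoming arbitrarily short). I would reduce this to a combinatorial renormalization model: Rauzy--Veech induction identifies a finite cover of $\mathcal H_\kappa$ with a suspension (zippered rectangles) over a space of interval exchange transformations carrying a renormalization map, and one has to show this map admits an absolutely continuous invariant measure of finite total mass---equivalently, that its first-return time is integrable, a quantitative estimate on how fast a typical interval exchange simplifies under induction. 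Alternatively one can work directly on moduli space, using that near a short saddle connection its holonomy $w\in\C$ is one of the period coordinates, so that integrating Lebesgue over $\{|w|\le\delta\}$ contributes a convergent factor $\sim\delta^2$, and then patching over the finitely many combinatorial degeneration types and organizing the induction via recurrence of the Teichm\"uller geodesic flow $g_\R$, as in Masur's original argument. I expect this estimate, together with the bookkeeping of the cusp stratification, to be the principal difficulty; the rest is essentially formal.

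Finally, for uniqueness: any $\GL(2,\R)$-invariant measure in the Lebesgue class has a density $f$ with respect to $\mu_\kappa$, which by $\SL(2,\R)$-invariance (unit Jacobian in period coordinates) is an $\SL(2,\R)$-invariant measurable function. Ergodicity of the $\SL(2,\R)$-action---equivalently of $g_\R$, by the Mautner phenomenon---on each connected component of every level set $\{\mathcal A_\kappa=t\}$, proved by Masur and Veech together with finiteness, forces $f$ to be constant on each such component, hence to depend on the area alone; invariance under the dilation subgroup of $\GL(2,\R)$ then forces $f=c\,\mathcal A_\kappa^{-(2g+\sigma-1)}$, and the probability normalization pins down $c$, giving uniqueness of $\mu_\kappa$.
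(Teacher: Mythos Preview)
The paper does not prove this theorem: it is stated in the background section with a citation to \cite{Ma82} and \cite{Ve82}, and no argument is supplied. There is therefore nothing in the paper against which to compare your proposal.

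That said, your outline is the standard one and is essentially correct. The construction of $\mu_\kappa$ via pullback of Lebesgue in period charts, with well-definedness coming from the fact that transition maps lie in $\GL(2g+\sigma-1,\Z)$, is exactly right; so is the Jacobian computation for $\SL(2,\R)$-invariance and the cone disintegration producing $\mu^{(1)}_\kappa$. You correctly isolate finiteness of $\mu^{(1)}_\kappa$ as the substantive point and name both of the historical approaches (Veech's via integrability of the return time for Rauzy--Veech induction, Masur's via direct volume estimates near degenerations). What you have written there is an honest sketch rather than a proof; the actual estimate---e.g.\ controlling the contribution of surfaces with a short saddle connection, summed over all combinatorial types and iterated over successive degenerations---is where the real work lies, and your paragraph does not carry it out. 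Your uniqueness argument via ergodicity of the $\SL(2,\R)$-action is correct in spirit, but note two points: ergodicity is itself part of the Masur--Veech package (proved alongside finiteness, not independently of it), and strata can be disconnected (Kontsevich--Zorich), so ``unique'' should be read per connected component, as you implicitly acknowledge.
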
 
 
 \noindent We can now introduce several precise definition of  ``typical''  (in measure sense) for translation surfaces and flows.
 
 \begin{definition} Let $\mu$ denote any $\SL(2, \R)$-invariant probability measure on $\mathcal H^{(1)}_\kappa$. A translation surface in $\mathcal H^{(1)}_\kappa$ is called {\bf $\mu$-typical} if it chosen randomly with respect to $\mu$. It is called {\bf Masur--Veech typical} if it is $\mu$-typical for $\mu=\mu^{(1)}_\kappa$, the Masur--Veech measure. 
 A translation flow is called $\mu$-typical
 if it the horizontal (or vertical) flow of a $\mu$-typical translation surface and Masur--Veech typical if $\mu =\mu^{(1)}_\kappa$ is
 the Masur--Veech measure. Finally, a translation flow is called {\bf directionally typical} for a given translation surface with 
 horizontal/vertical vector fields $(X,Y)$, if it chosen randomly with respect to the Lebesgue measure on the circle $S^1$, among the flows generated by vector fields in the one parameter family
 $$
 \{ V_\theta:=\cos \theta \cdot  X + \sin \theta \cdot Y  \,, \quad \theta \in S^1\}\,.
 $$
 \end{definition} 
 
 \noindent A well-known difficulty in the study of the dynamics of rational polygonal billiards is that they are never Masur--Veech typical (for genus $g\geq 2$),  hence the wealth of results for Masur--Veech typical translation surfaces are not directly relevant for  the dynamics of rational billiards.  
 
 \noindent The strategy to go beyond results that can be proved {\it for all translation surfaces},  for directionally typical translation flows (which of course are relevant for rational billiards), is to consider a natural $\SL(2, \R)$-invariant measure $\mu$ supported  on the $\SL(2, \R)$-orbit closures of a given translation surface, and to prove results for $\mu$-typical translation
 surfaces, hoping that they can be extended to the initial surface. 
 
 This program presents {\it a priori} several very serious difficulties, beginning with the fact that since $\SL(2, \R)$ is not an amenable group, there is no guarantee that an $\SL(2, \R)$-invariant measure supported on any $\SL(2, \R)$-orbit closure even exists. 
The celebrated theorems of Eskin and  Mirzakhani~\cite{EM18}  and Eskin, Mirzakhani  and Mohammadi~\cite{EMM15} made the above strategy possible.

\begin{theorem}  \cite{EM18}, \cite{EMM15} For any $(M,h) \in \mathcal H_\kappa$, its orbits closure $\GL(2,\R)$-orbit closure
$\mathcal M := \overline {\GL(2,\R) (M,h)}$ is locally in period coordinates an affine manifold. In addition, every ergodic probability
measure invariant for the (amenable)  maximal parabolic subgroup $P$ of upper triangular matrices is $\SL(2, \R)$-invariant, hence every $\GL(2,\R)$-orbit closure supports a $\GL(2,\R)$-invariant ergodic measure.  Conversely, every ergodic $\SL(2, \R)$-invariant probability measure  on $\mathcal H_\kappa$ is supported on a codimension $1$ submanifold of an orbit closure (given by
translation surface of a given total area) and belongs to the Lebesgue measure class on it. 
\end{theorem}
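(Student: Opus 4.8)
\section*{Proof proposal}

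The plan is to first observe that the existence half of the statement is a formality given the classification half: the set of $P$-invariant probability measures supported on a fixed orbit closure is compact and convex, and $P$ is amenable, so a Markov--Kakutani fixed point argument produces $P$-invariant probability measures; ergodic decomposition then yields an ergodic one. So the real content is to show that every ergodic $P$-invariant probability measure $\nu$ on $\mathcal H_\kappa$ is in fact $\SL(2,\R)$-invariant and that $\operatorname{supp}\nu$ is, in period coordinates, an affine submanifold. Writing $P=AN$ with $A=g_\R$ the Teichm\"uller geodesic flow and $N=h^+_\R$ the unstable horocycle flow, and recalling the commutation $g_t h^+_s g_{-t}=h^+_{e^{2t}s}$, the goal reduces to showing that such a $\nu$ is additionally invariant under the opposite horocycle $h^-_\R$, since $P$ together with $h^-_\R$ generates all of $\SL(2,\R)$.

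Next I would deploy the conditional-measure machinery in the style of Ratner, Margulis--Tomanov and Einsiedler--Katok--Lindenstrauss. Over the system $(g_t,\nu)$ one has the Oseledets decomposition of the Kontsevich--Zorich cocycle on $H^1$, with the tautological plane carrying the exponents $\pm 1$ and the remaining $2g-2$ exponents organized as $1>\lambda_2\ge\cdots\ge\lambda_g\ge 0\ge\cdots$; in period coordinates these exponents stratify the strong-unstable horospherical foliation of $g_t$. Since $\nu$ is $N$-invariant, its conditional measures along the $N$-direction are Lebesgue. The crux is to understand the conditional measures along the non-tautological Lyapunov subspaces and to show that they too are affine (Lebesgue on affine subspaces), since the affineness of $\operatorname{supp}\nu$ will be read off from these conditionals.

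The technical heart is the exponential drift argument of Benoist--Quint, in the non-homogeneous adaptation of Eskin--Mirzakhani. One chooses two $\nu$-generic points $x,y$ on a common strong-unstable leaf but not on a common $N$-orbit, pushes them forward by $g_t$, and exploits Poincar\'e recurrence to a large ``good'' set on which the Oseledets frames are nearly constant and nearly synchronized between $x$ and $y$. Tracking the period-coordinate difference $y-x$ under the cocycle, one argues that along a subsequence of times it renormalizes to a nonzero vector lying in a single Lyapunov subspace, and that $\nu$ is invariant under the corresponding translation flow in period coordinates. The indispensable inputs here are: (i) semisimplicity of the Kontsevich--Zorich cocycle, which rigidifies the Oseledets splitting and lets one pin down the drift direction; (ii) the analysis of the ``$u$-inert'' and time-change subspaces and of the Jordan structure of the cocycle; (iii) the quantitative factorization theorem and the accompanying estimate on the rate at which the two Oseledets frames converge. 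This $C^1$-bootstrap is where the bulk of the difficulty lies, and it is the step I expect to be the main obstacle.

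Finally, once one extra translation invariance has been produced $\nu$-a.e., a bootstrap shows that the space of such directions is a.e.\ a subspace stable under the $A$- and $N$-actions; transporting it around the orbit closure forces it to contain $h^-_\R$, giving $\SL(2,\R)$-invariance. Affineness of $\operatorname{supp}\nu$ follows because the extra invariance is by translations in period coordinates, so the support is locally a finite union of affine subspaces, and ergodicity collapses this to a single affine manifold; the Eskin--Mirzakhani--Mohammadi isolation and closing argument then promotes ``$\nu$ has locally affine support'' to ``$\overline{\GL(2,\R)(M,h)}$ is an affine manifold'', with only countably many such closures. The converse direction is comparatively soft: the unit-area locus is $\SL(2,\R)$-invariant by the Masur--Veech theorem and sits inside the affine orbit closure as a real-codimension-one submanifold cut out by the Riemann bilinear relation, and horocycle invariance forces the conditional measures, hence $\nu$ on this locus, to be Lebesgue.
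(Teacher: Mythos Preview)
The paper does not prove this theorem. It is stated in Section~2 as background, with attribution to \cite{EM18} and \cite{EMM15}, and no argument is given; the paper then moves on to Filip's refinements and to the program of classifying orbit closures. So there is no ``paper's own proof'' to compare your proposal against.

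That said, your outline is a recognizable high-level sketch of the actual Eskin--Mirzakhani and Eskin--Mirzakhani--Mohammadi arguments: amenability of $P$ to get existence (modulo a non-escape-of-mass input you do not mention, since strata are not compact), conditional measures along Lyapunov leaves, the Benoist--Quint exponential drift as adapted to the non-homogeneous setting, semisimplicity and the factorization theorem as the technical core, and then the isolation/equidistribution step of \cite{EMM15} to pass from measure classification to the orbit-closure statement. Two small corrections: the logical order is that the measure classification \cite{EM18} comes first and the orbit-closure theorem \cite{EMM15} is deduced from it (your final paragraph suggests the reverse dependence); and the fact that the invariant measure is in the Lebesgue class on its support is part of the hard classification theorem, not a ``comparatively soft'' converse. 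But none of this is at odds with the paper, which simply quotes the result.
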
 

\noindent S.~Filip \cite{Fi16a}, \cite{Fi16b} later refined the above result and proved several algebraic properties (real multiplication and torsion) of orbit closure characterizing (together with a dimensional constraint)  orbit closures, which imply that they are always algebraic varieties defined over $\overline {\Q}$.

\noindent The above mentioned results of Eskin, Mirzakhani and Mohammadi, and Filip have motivated a program of classification of orbit closures. However, from the point of view of questions concerning the dynamical properties of arbitrary translation surfaces,
and of rational polygonal billiards in particular, the above theorem do not establish that the initial translation surface is ''typical''
in its orbit closure (with respect to the associated Lebesgue measure class). 

\noindent In fact, for the purpose of understanding the dynamics of the directionally typical translation flow, for all translation surfaces,
it may be necessary to understand the limits of orbit segments of the circle subgroup or of the unipotent subgroups of $\SL(2, \R)$ 
as they are pushed under the action of the diagonal subgroup. 

\noindent Recent results (see \cite{CW22}, \cite{CSW}) on the dynamics of the unipotent subgroup (Teichm\"uller horocycle flow) have confirmed that it has a complicated dynamical behavior, although they do
not address in general the question of limits of geodesic push-forwards  of horocycle arcs or invariant measures (see \cite{F21} for a partial result).

\subsection{Ergodic properties of translation flows:unique ergodicity} 

\noindent A celebrated theorem proved independently by  H.~Masur  and W.~Veech  states that, in the terminology introduced above:
 
 \begin{theorem}  
 \label{thm:Keane_conj}
 \cite{Ma82} ,  \cite{Ve82} 
 For all strata of translation surfaces, the Masur--Veech typical translation flow is uniquely ergodic. 
 \end{theorem}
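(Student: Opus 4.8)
The plan is to derive the theorem from a geometric criterion of Masur, which links the behaviour of Teichm\"uller geodesics to unique ergodicity of translation flows, together with the Poincar\'e recurrence theorem applied to the probability measure $\mu^{(1)}_\kappa$ supplied by the Masur--Veech theorem on the existence of the invariant measure. Recall that $\mu^{(1)}_\kappa$ is a finite measure on $\mathcal H^{(1)}_\kappa$ invariant under the whole $\SL(2,\R)$ action, hence in particular under the Teichm\"uller geodesic flow $g_{\R}$.

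\emph{Step 1: Masur's criterion.} The statement to establish is: if for $(M,h)\in\mathcal H^{(1)}_\kappa$ the forward semiorbit $\{g_t(M,h):t\ge 0\}$ is \emph{non-divergent}, i.e.\ it meets some fixed compact set $K\subset\mathcal H^{(1)}_\kappa$ along a sequence $t_n\to+\infty$, then the translation flow of $(M,h)$ that is renormalized by $g_{\R}$ (say the horizontal one) is uniquely ergodic; applying this to $(M,ih)$, i.e.\ to the backward semiorbit, handles the vertical flow. I would argue by contraposition. If that flow is not minimal, then it has a periodic component or a saddle connection in its direction, whose core curve, respectively connecting segment, is horizontal and therefore contracted by $g_t$; hence $g_t(M,h)\to\infty$, and we may assume the flow minimal. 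If the flow is minimal but \emph{not} uniquely ergodic, then (as in the Katok--Veech analysis of the cone of transverse invariant measures of minimal foliations) it carries two distinct ergodic transverse invariant probability measures, and from these one produces, for every $\varepsilon>0$ and all large $t$, a relative homology class $\gamma=\gamma(t)\in H_1(M,\Sigma;\Z)$ whose holonomy computed on the renormalized surface $g_t(M,h)$ has modulus $<\varepsilon$; such a class is realized by a saddle connection (or a flat cylinder core curve) of $g_t(M,h)$ of length $<\varepsilon$. By Masur's compactness criterion for strata --- surfaces in $\mathcal H^{(1)}_\kappa$ all of whose saddle connections have length $\ge\delta$ form a compact set --- the existence of arbitrarily short saddle connections along the forward geodesic forces $g_t(M,h)\to\infty$, contradicting non-divergence.

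\emph{Step 2: Poincar\'e recurrence.} Since $\mu^{(1)}_\kappa$ is a finite $g_{\R}$-invariant measure, I would exhaust $\mathcal H^{(1)}_\kappa$ by compact sets $K_1\subset K_2\subset\cdots$ with $\mu^{(1)}_\kappa(K_n)\to1$ and apply Poincar\'e recurrence to the time-one maps $g_{\pm1}$ on each $K_n$: for $\mu^{(1)}_\kappa$-a.e.\ $(M,h)$ both semiorbits $\{g_{\pm t}(M,h):t\ge0\}$ return infinitely often to some compact set, hence are non-divergent. Step 1 then gives that for $\mu^{(1)}_\kappa$-a.e.\ $(M,h)$ both the horizontal and the vertical translation flow are uniquely ergodic; in particular the Masur--Veech typical translation flow is uniquely ergodic.

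\emph{Where the difficulty lies.} The heart of the argument is Step 1, and inside it the implication ``a second ergodic invariant measure of the minimal translation flow $\Rightarrow$ genuinely short saddle connections on the renormalized surfaces $g_t(M,h)$''. This is the one place where the measure theory of the flow is converted into the cusp geometry of the stratum, and it rests on the structure theory of transverse invariant measures of minimal foliations (equivalently, in Veech's route, on the combinatorics of Rauzy--Veech induction together with the contraction of the associated nonnegative integral cocycle on the positive cone, which yields unique ergodicity as soon as the induction matrices become positive infinitely often with the right balance --- an event of full measure by ergodicity of the renormalization map). I would emphasize that this proof is purely qualitative and gives no rate of convergence of ergodic averages; the cohomological, Hodge-theoretic method developed in the remainder of these notes refines ``recurrence'' into quantitative control of the Lyapunov spectrum of the Kontsevich--Zorich cocycle, upgrading unique ergodicity to \emph{polynomial} unique ergodicity.
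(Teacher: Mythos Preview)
Your overall architecture --- Masur's criterion plus Poincar\'e recurrence for the Masur--Veech probability measure --- is exactly the route the paper takes. Step~2 is identical to the paper's one-line deduction after Theorem~\ref{thm:Masur_criterion}.

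The genuine difference is in Step~1. You sketch Masur's original \emph{geometric} proof of the criterion (non-unique ergodicity forces short saddle connections, hence divergence), whereas the paper gives a \emph{cohomological} proof via Hodge theory: one assigns to each invariant probability measure $\mu$ its flux class $F(\mu)\in H^1(M,\R)$, observes that under recurrence to a compact set $K$ the Hodge norm of $g^{\mathrm{KZ}}_t(M,h,F(\mu))$ is bounded by $C_K e^{-t}$, and then invokes the spectral-gap Lemma~\ref{lemma:spectral_gap} to force $F(\mu)$ into the line $\R[\im(h)]$; injectivity of $\mu\mapsto F(\mu)$ (Lemma~\ref{lemma:inj}) finishes. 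No saddle-connection geometry enters.

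Your sketch of the geometric route has one soft spot worth flagging: the passage ``a relative homology class $\gamma(t)$ of holonomy $<\varepsilon$ \dots\ is realized by a saddle connection of length $<\varepsilon$'' is not valid as stated --- short holonomy does not by itself produce a short saddle connection (think of separating classes). Masur's actual argument builds the short saddle connections directly from the geometry of the two transverse measures (thin rectangles/slits), not by first finding a short homology class. This is precisely the ``difficulty'' you yourself isolate, but your phrasing suggests a shortcut that does not exist.

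As you anticipate in your final paragraph, the payoff of the paper's cohomological proof is that the same Hodge-norm inequality, once one tracks the visit frequency to $K$ rather than mere recurrence, upgrades immediately to the polynomial bound of Theorem~\ref{thm:eff_UE_typ}; the geometric proof gives no such rate.
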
 
 \noindent Note that, as remarked above, this theorem does not apply to rational billiards. 
 
 \smallskip
\noindent  An equivalent statement for IET's had been conjectured by 
 M.~Keane for IET's and was then known as the {\it Keane conjecture}.  Keane had investigated conditions for the minimality
 of IET's \cite{Ke75}  and had first conjectured that minimality implies unique ergodicity. Counterexamples to this first version  \cite{KN76} prompted him to formulate a revised form of the conjecture. 
 
\noindent  Equivalent results on the minimality of translation flows had been proved independently by Zemlyakov and Katok~\cite{KZ75}, and it appears their paper was not immediately accessible to mathematicians in the West.
   
 \noindent We recall that a dynamical system is uniquely ergodic  if the cone of all probability invariant measures is one-dimensional, a property that, for minimal continuous dynamical systems, is equivalent to the uniform convergence of ergodic averages to the mean, for all continuous functions.  This characterization holds for IET's and translation flows (despite the fact that they are not continuous dynamical systems) for all points with well defined semi-orbits.

The above mentioned Masur--Veech theorem was later extended by Kerckhoff, Masur and Smillie to all translation surfaces. 

\begin{theorem} \cite{KMS86} 
\label{thm:KMS} 
For all translation surfaces, the directionally typical translation flow is uniquely ergodic. In particular, for any rational billiard table
and for Lebesgue almost all directions, the billiard flow is uniquely ergodic on the corresponding invariant surface.
   \end{theorem}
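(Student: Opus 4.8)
The plan is to reduce unique ergodicity of the flow in a direction $\theta$ to the \emph{recurrence} of the associated Teichm\"uller geodesic, and then to establish that recurrence for Lebesgue-almost every $\theta$ by a measure estimate on the circle of directions. Fix the translation surface $(M,h)$, normalized to unit area, write $\mathrm{sys}(Y)$ for the length of the shortest saddle connection of a translation surface $Y$, and for $\theta\in S^1$ let $r_\theta\in \operatorname{SO}(2)<\SL(2,\R)$ be the rotation carrying the direction $\theta$ to the vertical, so that the flow in direction $\theta$ on $(M,h)$ is the vertical flow of $r_\theta(M,h)$. Discarding the countable (hence Lebesgue-null) set of directions that contain a saddle connection, we may assume the direction-$\theta$ foliation is minimal; then unique ergodicity is equivalent to uniform convergence of ergodic averages to the mean, and its failure means there exist two distinct ergodic invariant probability measures. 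The key reduction is \emph{Masur's criterion} \cite{Ma82}: if the forward geodesic $\{g_t\, r_\theta(M,h) : t\ge 0\}$ returns to a fixed compact subset of the stratum $\mathcal H^{(1)}_\kappa$ along a sequence $t_n\to\infty$, then the direction-$\theta$ flow on $(M,h)$ is uniquely ergodic. The mechanism, which I would recall, is that two distinct ergodic invariant measures of the direction-$\theta$ flow split the surface and force, under renormalization by $g_t$, a sequence of saddle connections whose length tends to $0$, i.e. the surfaces $g_t\, r_\theta(M,h)$ eventually leave the thick part $\{\mathrm{sys}\ge\delta\}$ for every $\delta>0$; conversely, recurrence to a compact set, together with uniform hyperbolicity of the renormalization cocycle on that compact set (this is where a Hodge-theoretic control of the Kontsevich--Zorich cocycle, as discussed above, is the natural tool), forces all such measures to coincide. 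Granting the criterion, the theorem reduces to: for a.e. $\theta$, the geodesic $g_t\, r_\theta(M,h)$ does not diverge to the boundary of $\mathcal H^{(1)}_\kappa$.

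Next I would set up the combinatorics of divergent directions. Divergence of $g_t\, r_\theta(M,h)$ is equivalent to $\mathrm{sys}\big(g_t\, r_\theta(M,h)\big)\to 0$, and since the systole is always realized by a saddle connection whose holonomy vector $v\in\R^2$ on $(M,h)$ becomes $g_t\, r_\theta v$ on $g_t\, r_\theta(M,h)$, divergence in direction $\theta$ implies: for every $\rho>0$ there are a saddle connection of $(M,h)$, with holonomy vector $v_j$ of length $\ell_j$, and a time $t\ge 0$, such that $|g_t\, r_\theta v_j|<\rho$. An elementary minimization gives $\min_{t\ge 0}|g_t\, r_\theta v_j|\asymp \ell_j\,|\sin 2(\theta-\arg v_j)|^{1/2}$ up to a one-sided truncation (the truncation kills the half-turn where $g_t$ only expands $v_j$), so for $\rho<\mathrm{sys}(M,h)$ the set $I_{j,\rho}$ of directions for which $v_j$ ever becomes $\rho$-short is contained in an arc of length $O(\rho^{2}/\ell_j^{2})$ centered at $\arg v_j$. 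Combined with the classical quadratic bound $\#\{j:\ell_j\le L\}\asymp L^{2}$ on the number of saddle connections of $(M,h)$, this exhibits the set of non-uniquely-ergodic directions as a subset of $\bigcap_{\rho>0}\bigcup_j I_{j,\rho}$ with $|I_{j,\rho}|=O(\rho^{2}/\ell_j^{2})$.

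The main obstacle is to prove that this last set has measure zero, and a naive union bound does \emph{not} suffice: the quadratic growth of saddle connections forces $\ell_j\asymp\sqrt{j}$, which places $\sum_j\ell_j^{-2}$ exactly on the divergent side of the borderline, so $\sum_j\rho^{2}/\ell_j^{2}=+\infty$ for every $\rho$. Overcoming this is the quantitative heart of the theorem, and there are two routes. The first, following Kerckhoff--Masur--Smillie \cite{KMS86}, organizes the bad directions by dyadic scales $\rho=2^{-n}$: a saddle connection that only becomes $2^{-n}$-short at a late time must already have been $2^{-n+1}$-short at an earlier time, while the family of short saddle connections at a fixed scale decomposes the surface into subsurfaces of strictly smaller topological complexity, on which one recurses; the resulting recursive covering yields not merely measure zero but Hausdorff dimension $\le 1/2$ for the divergent set. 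The second route invokes a contraction inequality of Eskin--Masur type: a proper function $\Phi$ on $\mathcal H^{(1)}_\kappa$, comparable to $1/\mathrm{sys}$, and constants $c\in(0,1)$, $b>0$ with $\int_{S^1}\Phi\big(g_t\, r_\theta Y\big)\,d\theta\le c\,\Phi(Y)+b$ for all $Y$ and all large $t$; iterating the associated averaging operator and comparing, via the Cartan decomposition of $\SL(2,\R)$, the geodesic flow with a random walk keeps $\int_{S^1}\Phi\big(g_n\, r_\theta(M,h)\big)\,d\theta$ bounded in $n$, so $\Phi\big(g_n\, r_\theta(M,h)\big)\to\infty$ only on a $\theta$-set of measure zero. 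In either approach the genuine difficulty is uniform control of the geometry of the thin part of the stratum near its boundary; the rest is bookkeeping. The second assertion of the theorem then follows at once: by the unfolding construction described above, the billiard flow in a rational polygon $P$ is the direction-$\theta$ translation flow on $M_P$ for the corresponding $\theta$, and unfolding identifies the relevant circles of directions up to the finite group $G_P$, so a null set of exceptional directions on $M_P$ pulls back to a null set of exceptional directions for the billiard in $P$.
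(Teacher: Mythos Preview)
Your approach is essentially the same as the paper's: reduce to Masur's criterion (which the paper proves cohomologically in Theorem~\ref{thm:Masur_criterion}) and then invoke the almost-sure recurrence of Teichm\"uller geodesics for directions on a fixed surface, which the paper simply attributes to \cite{KMS86} without proof. Your proposal goes further than the paper by correctly identifying the borderline divergence $\sum_j \ell_j^{-2}=+\infty$ as the real obstruction and sketching the two standard ways around it (the KMS complexity recursion and the later Eskin--Masur contraction inequality); the paper gives no such detail, so there is nothing to compare beyond noting that your outline of Masur's criterion leans on the original ``short saddle connection'' mechanism whereas the paper's version runs through the Hodge-norm spectral gap of Lemma~\ref{lemma:spectral_gap}.
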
 
   
  \noindent An effective version of the above unique ergodicity result was given by Y.~Vorobets \cite{Vo97}, who proved bounds in mean which however are not polynomial (power-law) in time, but have the significant advantage to come with explicit estimates
   for the constants in terms of the genus of the surface. Indeed, the main result of \cite{Vo97} is a condition for the ergodicity (on the $3$-dimensional phase space) of a non-rational polygonal billiard flow in terms of its rational approxximations.
   
   \smallskip
   \noindent  The polynomial unique ergodicity of directionally typical translation flows on all translation surfaces was proved in 
  \cite{AtF08}.
  
  \begin{theorem} 
  \label{thm:eff_erg} 
  For each stratum $\mathcal H^{(1)}_\kappa$ of translation surfaces there exists a constant $\alpha_\kappa>0$
  such that, for all translation surfaces $(M,h) \in \mathcal H^{(1)}_\kappa$ and for the directionally typical translation flow 
  $\phi^{V_\theta}_\R$ on $(M,h)$ there exists a constant $K_h(\theta) >0$ such that the following holds. For all functions 
  $f \in H^1(M)$  (the Sobolev space of functions with square integrable first weak derivative) of zero average, and for all 
  $(x,T) \in M \times\R^+$ such that $x$ has infinite forward orbit, we have
  $$
  \Big \vert  \frac{1}{T}  \int_0^T  f \circ \phi^{V_\theta}_t (x)  dt   \Big\vert   \leq  K_h(\theta) T^{-\alpha_\kappa}\,.
  $$
  \end{theorem}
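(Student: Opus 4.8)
The plan is to follow the cohomological (Hodge--theoretic) strategy of \cite{F02} and \cite{AtF08}. After rotating the translation surface, i.e. replacing $(M,h)$ by $R_\theta(M,h)$, one may assume $\theta=0$, so that $V_\theta$ is the horizontal vector field $X$, the flow $\phi^{V_\theta}_\R$ is the horizontal flow of a translation surface in the same stratum $\mathcal H^{(1)}_\kappa$, and directional typicality becomes typicality of the surface along the circle orbit $\theta\mapsto R_\theta(M,h)$. The first step is to represent the ergodic integral cohomologically: closing up the orbit arc $\gamma_x^T=\{\phi^X_t(x):0\le t\le T\}$ by a bounded transversal, on which $\re(h)$ vanishes, produces a real $1$-current (a cycle up to this bounded arc) $\beta_{x,T}$ with $\int_0^T f\circ\phi^X_t(x)\,dt=\beta_{x,T}(f\,\re(h))$. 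The non-closed form $f\,\re(h)$ is then split, via a partial solution of the cohomological equation $Xu=f$ at a renormalized scale, into a harmonic (hence closed) part, whose period over $\beta_{x,T}$ is a pairing in cohomology, plus remainders controlled by Sobolev trace estimates for $f$ along transversals; this is Forni's basic inequality, which bounds $\big|\int_0^T f\circ\phi^X_t(x)\,dt\big|$ by $\|f\|_{H^1}$ times the Hodge norm of the projection $\beta_{x,T}^{\mathrm{red}}$ of the orbit current onto the complement of the tautological plane, plus lower--order terms.

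The second step is Teichm\"uller renormalization. Set $t_0\asymp\log T$ and let $(M_\tau,h_\tau)=g_\tau(M,h)$. Under $g_{t_0}$ the orbit of $x$ of horizontal length $T$ rescales to an orbit of bounded length on $(M_{t_0},h_{t_0})$, while the homology class $[\beta_{x,T}]$ is transported by the Kontsevich--Zorich cocycle: its tautological component grows like $e^{t_0}$, which is the factor that the ergodic average divides by, whereas the component in the complement grows with the reduced cocycle. The hypothesis $\int_M f=0$ is exactly what annihilates the tautological (Lyapunov exponent $1$) contribution, so that $\int_0^T f\circ\phi^X_t(x)\,dt$ is controlled by the reduced Hodge norm $\|\beta_{x,T}^{\mathrm{red}}\|$, giving $\frac1T\big|\int_0^T f\circ\phi^X_t(x)\,dt\big|\le C\,\|f\|_{H^1}\,e^{-t_0}\,\|\beta_{x,T}^{\mathrm{red}}\|$. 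It therefore suffices to prove $\|\beta_{x,T}^{\mathrm{red}}\|\le K_h(\theta)\,e^{(1-\alpha_\kappa)t_0}$, that is, that the reduced part of the cocycle grows along this geodesic with exponent bounded below $1$ by a stratum--dependent margin.

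The third step is the Hodge--norm estimate, which is the technical heart. Forni's variational formula for the Hodge norm along the Teichm\"uller flow shows that the logarithmic derivative of $\|c(\tau)\|_{\mathrm{Hodge},g_\tau}$, for a class $c(\tau)$ transported by the cocycle, is always $\le 1$ and is bounded by a continuous function $\Lambda_\kappa<1$ whenever $g_\tau(M,h)$ lies in a fixed compact subset $\mathcal K_\epsilon$ of the stratum; the bound $\sup_{\mathcal K_\epsilon}\Lambda_\kappa<1$ depends only on $\kappa$. To turn this into an estimate along the orbit of an arbitrary surface in an a.e.\ direction, one invokes quantitative non-divergence of the Teichm\"uller geodesic flow along the circle orbit (in the spirit of \cite{KMS86} and its quantitative refinements, as assembled in \cite{AtF08}): for Lebesgue-a.e.\ $\theta$ the geodesic $\tau\mapsto g_\tau R_\theta(M,h)$ spends a proportion at least $1-\epsilon$ of every interval $[0,t_0]$ inside $\mathcal K_\epsilon$, and the total logarithmic cost of the excursions outside $\mathcal K_\epsilon$, where only the trivial bound $\le 1$ is available, is at most a fixed multiple of $\epsilon\,t_0$ plus an additive constant depending on $(M,h)$ and $\theta$. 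Choosing $\epsilon$ small relative to $1-\sup_{\mathcal K_\epsilon}\Lambda_\kappa$ and summing the gains on $\mathcal K_\epsilon$ against the losses during the excursions yields $\|\beta_{x,T}^{\mathrm{red}}\|\le K_h(\theta)\,e^{(1-\alpha_\kappa)t_0}$ with $\alpha_\kappa>0$ depending only on $\kappa$ and $K_h(\theta)<\infty$ for a.e.\ $\theta$ encoding the first entrance time into $\mathcal K_\epsilon$ and the depths of the individual cusp excursions (finite a.e.\ by recurrence and a Borel--Cantelli argument). Combining with the second step gives $\big|\frac1T\int_0^T f\circ\phi^{V_\theta}_t(x)\,dt\big|\le K_h(\theta)\,\|f\|_{H^1}\,T^{-\alpha_\kappa}$, as claimed.

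I expect the main obstacle to be the coupling of the two estimates in the third step: one needs the quantitative recurrence statement not merely to control the \emph{frequency} of cusp excursions but also their \emph{depth}, so that the accumulated Hodge--norm loss during an excursion is only polynomial, rather than exponential, in its length and in $T$; doing this uniformly over all directions outside a null set, and reconciling the scale at which non-divergence is measured with the scale at which the cohomological equation is partially solved, is the delicate work carried out in \cite{AtF08}. A secondary but genuine difficulty is the low regularity $f\in H^1(M)$: the boundary and trace terms produced by solving the cohomological equation only approximately must be estimated by Sobolev embedding on one--dimensional transversals, which is sharp in $H^1$ and leaves essentially no room to spare.
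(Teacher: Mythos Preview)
Your proposal is correct and follows essentially the same route as the paper: represent ergodic integrals by orbit currents, control the growth of their (reduced) cohomology classes under the Kontsevich--Zorich cocycle via the Hodge-norm variational formula (Lemma~\ref{lemma:first_var} and the function $\Lambda$ of Lemma~\ref{lemma:fund_form}), and feed in Athreya's quantitative recurrence~\cite{At06} for the directional input.

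One correction to your ``expected obstacle'': the depth of cusp excursions is \emph{not} an issue. The variational formula gives $\Lambda\le 1$ everywhere on the stratum, not just on compact sets, so the Hodge-norm growth during an excursion of duration $\ell$ is at most $e^{\ell}$ regardless of how deep into the cusp the geodesic goes. Hence only the total \emph{duration} of excursions matters, and Athreya's positive-frequency statement (formula~\eqref{eq:f_K}) is already sufficient; no Borel--Cantelli control on depths is needed. This is exactly the content of Lemma~\ref{lemma:spectral_gap} and the concluding paragraph after Theorem~\ref{thm:eff_UE_typ}. A minor organizational difference: the paper packages the argument via a ``transfer cocycle'' on a Sobolev bundle of $1$-currents (Lemma~\ref{lemma:spectral_gap_transfer}), closes orbits at a sequence of return times $(t_n)$ to $K$, and then handles general $T$ by an Ostrowski-type decomposition, rather than renormalizing at a single scale $t_0\asymp\log T$ as you propose; this sidesteps the question of whether $t_0$ itself is a return time.
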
 
   We outline below a cohomological proof of the Masur--Veech unique ergodicity theorem which, thanks to the results of
   J.~Athreya \cite{At06}, can be strengthened to a proof of the above theorem on effective unique ergodicity.

   \subsection{Ergodic properties of translation flows:weak mixing} 
   \noindent We recall the definition of the weak mixing property. It is a standard result of ergodic theory  that weak mixing
   can be defined is several equivalent ways. We consider below the equivalent formulations which are more relevant
   for our purposes. 
   
   \begin{definition} 
   \label{def:wm}
   A flow $\phi_\R$ on a probability space $(M, \mu)$ is \textbf{weakly mixing} if it satisfies any of the
   following equivalent properties: 
   \begin{itemize}
   \item The flow $\phi_\R$ has no non-constant (square-integrable) eigenfunctions, that is, there exists no function
   $u \in L^2(M, \mu)$ of zero average with the property that for some $\lambda \in \R$ 
   $$
   u \circ \phi_t =  e^{2\pi i  \lambda t}  u\,,     \quad \text{ for all } t\in \R\,.
   $$
   \item The spectral measures of the flow $\phi_\R$ for all  functions $f \in L^2(M, \mu)$ of zero average, defined as the Fourier transforms of the self-correlation  functions $\langle f \circ \phi_t, f \rangle$ for $t\in \R$,  are continuous, that is, have no atoms.
  \item The Cesaro averages of correlations with respect to the flow $\phi_\R$ of all pairs of square integrable functions of zero average converge to zero: 
  for all $f, g\in L^2(M,\mu)$ of zero average, we have
  $$
  \frac{1}{T}   \int_0^T  \big \vert \langle  f \circ \phi_t, g  \rangle \big\vert  dt \quad \to \quad 0\;
  $$
\item For all $c\in \R$, the product $\phi_\R\times R^c_\R$ of the flow $\phi_\R$ with the linear flow $R^c_\R$ on $\T$, 
defined as 
$$
R^c_t (y)= y+ ct \,\, \text{ \rm mod. } \Z\,, \quad \text{ for all } (y,t) \in \T\times \R\,,
$$
 is ergodic on the product $(M \times \T, \mu \times \text{\rm Leb}_\T)$.
\end{itemize} 
 \end{definition} 
 
 \begin{exercise} 
 Prove the equivalence of all the properties listed in Definition~\ref{def:wm}.
\end{exercise} 
 
\noindent  We recall that weak mixing is a $G_\delta$ dense properties in the space of measure preserving transformations
 \cite{Ha44}, while mixing is not \cite{Ro48}. For IET's and translation flows, A.~Katok \cite{Ka80} proved that they are never mixing.
 
\smallskip
\noindent The weak mixing property of Masur--Veech typical translation flows on surfaces of higher genus was first proved in~\cite{AvF07}.

\begin{theorem} 
\label{thm:weak_mixing}
For all strata of translation surfaces of genus $g \geq 2$, the Masur--Veech typical translation flow 
is weakly mixing. 
\end{theorem}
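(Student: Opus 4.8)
The plan is to deduce weak mixing from the \emph{absence of non-trivial eigenvalues}. Since the Masur--Veech typical surface is uniquely ergodic by Theorem~\ref{thm:Keane_conj}, its vertical translation flow is ergodic, so by Definition~\ref{def:wm} it suffices to show that for Masur--Veech almost every $(M,h)$ the vertical flow has no eigenvalue $t\neq 0$; the statement in every direction then follows from the $\SL(2,\R)$--invariance of the Masur--Veech measure. First I would realize $(M,h)$ by zippered rectangles over an interval exchange $T$ with combinatorial datum $\pi$, length datum $\lambda\in\R^d$ and suspension (height) datum $\tau\in\R^d$ with positive entries, so that the vertical flow becomes the suspension flow over $T$ with roof function equal to $\tau_\alpha$ on the $\alpha$-th exchanged interval; an eigenvalue $t$ with $L^2$ eigenfunction $u$ then restricts on the base to a measurable $\psi$ of constant modulus solving $\psi\circ T = e^{2\pi i t\tau}\psi$.

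Next I would run the Rauzy--Veech renormalization in the Zorich acceleration, so that the renormalization cocycle is log-integrable for an absolutely continuous invariant probability measure on normalized data. Passing to the first-return maps $T^{(n)}$ on the nested subintervals, the suspension flow renormalizes to the suspension flow over $T^{(n)}$ with roof datum $\tau^{(n)}=B(n)\tau$, where $B(n)\in\SL(d,\Z)$ with non-negative entries is the Rauzy--Veech (Kontsevich--Zorich) cocycle acting on suspension data, and $\psi$ restricted to the subinterval remains an eigenfunction of $T^{(n)}$ with the same $t$ and roof $\tau^{(n)}$. Since $T^{(n)}$ converges to the identity on the shrinking subintervals while $\psi$ is a fixed $L^2$ function, the multipliers $e^{2\pi i t\tau^{(n)}_\alpha}$ must cluster at $1$ along the renormalization --- this is Veech's weak mixing criterion, for which these notes supply a cohomological proof through the twisted cohomology cocycle at the parameter $t$: an eigenvalue $t$ obstructs the growth of that cocycle, in the concrete form
$$
\operatorname{dist}\bigl(B(n)(t\tau),\,\Z^d\bigr)\ \longrightarrow\ 0 \qquad (n\to\infty),
$$
and it is this condition that I then need to rule out for every $t\neq 0$ and almost every surface.

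The heart of the argument will be this parameter elimination. Rational $t=p/q$ I would dispose of first: such a $t$ makes $\psi^q$ an eigenfunction for the integer eigenvalue $p$, so by unique ergodicity of $T$ the $\R/\Z$-valued cocycle $p\tau$ is a coboundary over $T$, a non-generic, Masur--Veech null condition (a countable union of proper affine conditions in period coordinates, stable under renormalization by recurrence). For irrational $t$ I would invoke the Lyapunov structure of the Kontsevich--Zorich cocycle on $H^1(M,\R)$, and this is where the hypothesis $g\geq 2$ is essential: for $g=1$ the flow is a linear flow on $\T^2$ with genuine eigenfunctions, whereas for $g\geq 2$ the cocycle on the $2g$-dimensional space $H^1(M,\R)$ has simple top exponent $\lambda_1=1$ and, by Forni's theorem on positivity of the Kontsevich--Zorich exponents, a further exponent $1>\lambda_2>0$, and is pinching and twisting for the Masur--Veech measure. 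Using this non-degeneracy one proves, via a Borel--Cantelli estimate on the distribution modulo $\Z^d$ of $B(n)(t\tau)$ --- the two distinct exponents force $t\,B(n)\tau$ to spiral genuinely in a well-chosen $2$-plane rather than converge to a line --- that the set of irrational $t$ satisfying the displayed condition is Lebesgue-null for almost every surface; making this estimate uniform in $t$ on compact subsets of $\R\setminus\{0\}$, by a Fubini argument over moduli space together with the fact that rescaling $t$ corresponds to applying the Teichm\"uller flow, upgrades ``Lebesgue-null'' to ``empty''. Since the eigenvalue set is a subgroup of $\R$, it is then trivial, the ergodic vertical flow is weakly mixing, and the same holds in almost every direction.

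The hard part will be precisely this quantitative parameter elimination for irrational $t$, that is, the passage from ``$t$ is not an eigenvalue for Lebesgue-almost every $t$'' to ``for no $t\neq 0$'': this is where the genus hypothesis really bites, through the second positive Lyapunov exponent and the twisting and pinching of the Kontsevich--Zorich cocycle, and it demands a large-deviation / Borel--Cantelli control of the $\Z^d$-fractional parts of $B(n)(t\tau)$ that is uniform over $t$ in compact sets and over the surface --- the substantial analytic input carried out in \cite{AvF07}.
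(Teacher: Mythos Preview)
Your strategy --- Veech's criterion plus control of the Kontsevich--Zorich cocycle --- is the right one and matches \cite{AvF07}, but your elimination scheme is closer to the harder IET argument than to what the paper actually does for translation flows, and it carries avoidable complications.

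The paper's route (Section~\ref{ssec:typical_wm}) is a direct \emph{linear elimination}. One introduces the weak stable space $\mathcal W^s_{(M,h)}\subset H^1(M,\R)$ of cohomology classes $c$ whose $g^{KZ}_\R$-orbit accumulates on the integer lattice (Definition~\ref{def:weak_stable}); by the Veech criterion, the horizontal flow fails to be weakly mixing only if $[\re(h)]\in \R\cdot\mathcal W^s_{(M,h)}$. A Hausdorff-dimension lemma gives $\text{H-codim}\,\mathcal W^s_{(M,h)}\ge \dim E^+_{(M,h)}$, so $\R\cdot\mathcal W^s_{(M,h)}$ has Hausdorff dimension at most $1+2g-\dim E^+$. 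Crucially, $\mathcal W^s_{(M,h)}$ depends only on the stable leaf (on $[\im(h)]$), and the Masur--Veech measure has a product structure with unstable dimension $2g$ in absolute cohomology. A Fubini argument along the \emph{unstable} foliation --- varying $[\re(h)]$, not $t$ --- then shows that for almost every $[\re(h)]$ the whole line $\R[\re(h)]$ misses $\mathcal W^s_{(M,h)}$, provided $2g>1+2g-\dim E^+$, i.e.\ $\dim E^+\ge 2$. All putative eigenvalues $t\neq 0$ are eliminated simultaneously; there is no rational/irrational split, no Borel--Cantelli over $t$, and no ``Lebesgue-null $\Rightarrow$ empty'' upgrade.

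Two specific points. First, your rational-$t$ argument is shaky: that the $\T$-valued cocycle $p\tau$ be a \emph{measurable} coboundary over $T$ is not an affine condition in period coordinates, so ``countable union of proper affine conditions'' does not follow; in any case the step is unnecessary. Second, ``pinching and twisting'' (the Avila--Viana input for simplicity) is not used here; the only spectral input is $\lambda_2>0$ (equivalently $\dim E^+\ge 2$), which is \cite{F02}. Your two-stage plan (null set of bad $t$, then a uniform-in-$t$ upgrade via Teichm\"uller rescaling) is essentially the route one is \emph{forced} into for IET's, where the roof is fixed; for translation flows the freedom to move $[\re(h)]$ along the unstable leaf gives the one-step shortcut above.
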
 

\noindent We note that on the related problem of weak mixing of interval exchange transformations which are not rotations, 
Katok and Stepin \cite{KS67} proved the weak mixing property for IET's on $3$ intervals, and Veech \cite{Ve84} 
generalized the  result to IET's of rotation type on any number of intervals by introducing an important criterion 
for weak mixing which will be explained below. 

\smallskip
\noindent We also note that, as pointed out above, Theorem~\ref{thm:weak_mixing} does not imply any weak mixing property
for rational polygonal billiards. The argument can be generalized to prove the weak mixing property for $\mu$-typical
translation flows for all $\SL(2, \R)$-invariant measures supported on orbit closure of {\it rank} at least $2$ (in the
sense of A.~Wright \cite{Wr15a}). In the rank one case, directionally typical weak mixing was proved by Avila 
and Delecroix \cite{AD16} for translation surfaces, which are not torus covers, on closed $\SL(2, \R)$ orbits 
(non-arithmetic Veech surfaces). This class includes billiards in regular polygons with at least $5$ edges.  A
generalization to all rank one orbifolds has been announced by Aulicino, Avila and Delecroix. 

\smallskip
\noindent Finally, it is well known that weak mixing cannot be directionally typical for translation surfaces which factor over a circle
$\T=\R/\Z$ (that is, that have a completely periodic directional foliation with commensurable cylinders), since non-constant eigenfunctions for the linear flow on $\T$ give by pull-back non-constant eigenfunctions for all ergodic directional flow on the
translation surface.  The presence of exceptional surfaces makes the problem of characterizing surfaces with directionally 
typical weak mixing more challenging. The most natural and simplest  conjecture can be stated as follows:

\begin{conjecture}
For all translation surface which do not factor over the circle (and in particular are not torus covers),  the directionally
typical translation flow is weakly mixing. 
\end{conjecture}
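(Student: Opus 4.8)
The plan is to reduce weak mixing, as in Definition~\ref{def:wm}, to the absence of non-constant measurable eigenfunctions, and then to rule out non-zero eigenvalues by a Hodge-theoretic version of the Veech criterion carried out along the forward Teichm\"uller geodesic through the circle of directions on the given surface. Fix $(M,h)\in\mathcal H^{(1)}_\kappa$; by Theorem~\ref{thm:KMS} the directional flow $\phi^{V_\theta}_\R$ is uniquely ergodic, hence ergodic, for Lebesgue-almost every $\theta\in S^1$, so it suffices to show that for almost every such $\theta$ the only eigenvalue is $\lambda=0$. For each real $\lambda$ one forms the twisted cohomology $H^1_\lambda(M)$ with coefficients in the flat complex line bundle whose holonomy around the flow of $V_\theta$ is multiplication by $e^{2\pi i\lambda}$, together with the twisted cocycle obtained by parallel transport over the Teichm\"uller flow; at $\lambda=0$ this is the ordinary Kontsevich--Zorich cocycle. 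The cohomological Veech criterion states that $\lambda$ is an eigenvalue of $\phi^{V_\theta}_\R$ only if a distinguished twisted class (built from the imaginary part of the Abelian differential in the direction $\theta$) has a \emph{bounded} orbit under the twisted cocycle along $(g_t\cdot V_\theta)_{t\ge0}$.

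The second step is to show that, for Lebesgue-almost every $\theta$, the twisted cocycle along the geodesic emanating from $V_\theta$ admits no such bounded orbit except through a genuine flat sub-line-bundle, i.e.\ through an actual factorization of $(M,h)$ over a circle in the direction $\theta$; since by hypothesis $(M,h)$ has no such factorization --- and even for a surface that does factor over the circle in some directions those directions form an at most countable set of saddle-connection directions --- this forces $\lambda=0$ and hence weak mixing. The key input is Hodge-theoretic control of the Lyapunov spectrum of the twisted cocycle with respect to the affine $\SL(2,\R)$-invariant measure $\mu$ supported on the orbit closure $\mathcal M=\overline{\GL(2,\R)(M,h)}$ furnished by the Eskin--Mirzakhani--Mohammadi theorem: at $\lambda=0$ the non-tautological part of the KZ spectrum is bounded away from the top exponent $1$, and, after quotienting by any deterministic invariant sub-bundle, has strictly negative exponents (a consequence of the Forni curvature/second-variation formula for the Hodge norm). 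By upper semicontinuity of the Lyapunov exponents in $\lambda$ and the corresponding variational formulas for the twisted cocycle, for $\lambda$ in a full-measure set the twisted cocycle is expanding transverse to the (possibly trivial) sub-bundle of circle-factor directions along the $\mu$-typical geodesic; a Fubini argument in $(\theta,\lambda)$, combined with exponential recurrence of the geodesic to a compact part (Athreya's theorem \cite{At06}) to upgrade $\mu$-typical estimates to quantitative bounds along almost every directional geodesic, then excludes all non-zero eigenvalues for a.e.\ $\theta$.

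The hard part will be precisely the passage from ``$\mu$-typical'' to ``directionally typical on the given surface'': one must understand the limits, under the Teichm\"uller geodesic flow, of the circle arc $\{V_\theta:\theta\in S^1\}$ lying over the fixed surface, or at least establish enough Birkhoff-genericity of almost every $V_\theta$ for the cocycle functionals appearing in the Veech criterion, a problem related to (but strictly harder than) the still largely open study of geodesic push-forwards of horocycle arcs alluded to in \S\ref{subsec:Mod_spaces}. For non-arithmetic Veech surfaces this step was carried out by Avila and Delecroix \cite{AD16} using the lattice property of the Veech group and a direct analysis of periodic cylinder decompositions; for higher-rank orbit closures it should be accessible through Chaika--Eskin-type equidistribution of almost every $g_t$-pushed direction together with Wright's cylinder-deformation theory, but for general rank-one loci beyond Veech surfaces, and a fortiori in the absence of any lattice, the needed equidistribution statement is not available and a genuinely new idea seems to be required. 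A secondary difficulty is to classify the deterministic invariant sub-bundles of the twisted cocycle on an arbitrary affine invariant submanifold --- ruling out ``accidental'' invariant pieces of $H^1_\lambda$ not coming from circle factors --- which amounts to a twisted analogue of Filip's semisimplicity and real-multiplication results.
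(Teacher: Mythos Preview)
The statement you are attempting to prove is labeled \emph{Conjecture} in the paper, and the paper offers no proof of it: it is presented as the ``most natural and simplest conjecture'' after noting that weak mixing fails for surfaces factoring over a circle, and the surrounding discussion makes clear that only special cases are known (Masur--Veech typical surfaces, $\mu$-typical surfaces on orbit closures of rank at least $2$, and non-arithmetic Veech surfaces via Avila--Delecroix, with a generalization to all rank-one orbifolds merely announced). There is therefore no ``paper's own proof'' to compare your proposal against.

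Your proposal is not a proof either, and you say so yourself: you identify the passage from $\mu$-typical to directionally typical on a fixed surface as requiring ``a genuinely new idea'', and you flag the classification of invariant sub-bundles of the twisted cocycle as a second open problem. Those are exactly the obstructions; the paper alludes to the first one when it remarks that understanding limits of geodesic push-forwards of horocycle or circle arcs is largely open. Two technical points in your outline are also not justified as stated: the Veech criterion in the paper (Theorem~\ref{thm:criterion}) is phrased in terms of the \emph{toral} Kontsevich--Zorich orbit of $[\lambda\,\re(h)]$ accumulating on $H^1_\kappa(M,\Z)$, not in terms of a bounded twisted-cocycle orbit, so your reformulation would need to be argued; and the appeal to ``upper semicontinuity of Lyapunov exponents in $\lambda$'' together with a Fubini argument in $(\theta,\lambda)$ is heuristic at best, since the twisted cocycle lives over the toral bundle and the relevant exponents are for the \emph{toral} KZ dynamics, where the parameter $\lambda$ is not fixed along the orbit. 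In short, your write-up is a reasonable sketch of a research program aligned with the paper's framework, but it should not be presented as a proof.
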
 
  
\noindent We now turn to the notion of polynomial weak mixing.   The study of effective weak mixing (in particular of the H\"older property of spectral measures) for substitution systems and translation flows was initiated by A.~Bufetov and B.~Solomyak in a series of papers~\cite{BS14},  \cite{BS18},   \cite{BS20},   \cite{BS21}.

\begin{definition} 
   \label{def:wm_eff}
   A smooth flow $\phi_\R$ on a probability space $(M, \mu)$ is \textbf{polynomially weakly mixing} for functions in a Banach space 
   $W \subset L^2(M, \mu)$  if it satisfies any of the  following (roughly equivalent) properties: 
   \begin{itemize}
   \item The spectral measures of the flow $\phi_\R$ for all  functions $f \in W$ of zero average satsfy a H\"older property:
   there exists $\alpha >0$ such that, for all $\lambda \in \R$ there exists a constant $C(\lambda)>0$ such that the following
   holds. For any $f \in W$ the spectral measure $\sigma_f$ of  $f\in W$ satisfies the bound
   $$
   \sigma_f( \lambda-r, \lambda + r) \leq  C(\lambda) \Vert f  \Vert _W  \cdot  r^\alpha \,, \quad \text{ for all }  r>0 \,;
   $$
   \item The Cesaro averages of correlations with respect to the flow $\phi_\R$ of all pairs of  functions of zero average in $W$ converge to zero polynomially:  there exists $\alpha' >0$ and $C>0$ such that  for all $f, g\in W$ of zero average, we have
  $$
\frac{1}{T}   \int_0^T  \big \vert \langle  f \circ \phi_t, g  \rangle \big\vert  dt \, \leq  \,C \Vert f \Vert_W \Vert g \Vert_W
 \cdot T^{-\alpha'}\,;
  $$
\item The twisted ergodic integrals of functions  in $W$ decay polynomially in mean: there exists $\alpha''>0$, and, for all $\lambda\in \R$,  there exist a constant $C(\lambda) >0$ such that, for all $f\in W$ and all $T>0$, we have
$$
\Vert   \frac{1}{T} \int_0^T  e^{2\pi i  \lambda t} f \circ \phi_t  dt  \Vert_{L^2(M, \mu)}  \, \leq \,   C(\lambda) \Vert f \Vert_W \cdot T^{-\alpha''}\,.
$$
\end{itemize} 
 \end{definition} 
 \begin{exercise} 
 \label{ex:wm_eff}
Discuss the equivalence of all the properties listed in Definition~\ref{def:wm_eff}.
\end{exercise}

\noindent The polynomial weak mixing of Masur--Veech typical translation flows in all higher genus strata was proved in \cite{F22a}
and by a somewhat different approach by A.~Bufetov and B.~Solomyak \cite{BS21}. We will outline below our cohomological approach to (effective) weak mixing based on (twisted) Hodge theory. 

\begin{theorem}
\label{thm:weak_mixing_eff}
For all strata of translation surfaces of genus $g \geq 2$, the Masur--Veech typical translation flow 
is polynomially weakly mixing. 
\end{theorem}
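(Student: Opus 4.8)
\smallskip
\noindent The plan is to establish the third (twisted ergodic integral) characterization in Definition~\ref{def:wm_eff}: for Masur--Veech almost every $(M,h)\in\mathcal H^{(1)}_\kappa$ and every $\lambda\in\R$ there is a constant $C_h(\lambda)>0$ and a power $\alpha_\kappa''>0$, the exponent \emph{independent of $\lambda$ and of $(M,h)$}, such that for every zero-average $f$ in a Sobolev space $W$ (for instance $W=H^1(M)$, as in Theorem~\ref{thm:eff_erg}) and all $T>0$
$$
\Big\Vert \frac1T\int_0^T e^{2\pi i\lambda t}\,f\circ\phi^X_t\;dt\,\Big\Vert_{L^2(M)}\;\le\;C_h(\lambda)\,\Vert f\Vert_{W}\;T^{-\alpha_\kappa''}\,,
$$
where $\phi^X_\R$ is the horizontal flow of $(M,h)$. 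The case $\lambda=0$ is exactly the quantitative unique ergodicity of Theorem~\ref{thm:eff_erg}, so the entire content lies in the oscillatory regime $\lambda\neq0$, and the point is that the power-law decay must survive with a single exponent $\alpha_\kappa''$ as $\lambda$ ranges over all of $\R$.

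\smallskip
\noindent The first step is cohomological. Since the flow time along a horizontal orbit is obtained by integrating $\re(h)$, the factor $e^{2\pi i\lambda t}$ is parallel transport in the flat line bundle $\mathcal L_\lambda$ over $M$ with connection $d-2\pi i\lambda\,\re(h)$ --- flat because $\re(h)$ is closed. A twisted ergodic integral over $[0,T]$ is then the pairing of the orbit segment with a class in the twisted cohomology $H^1(M,\mathcal L_\lambda)$, a finite-dimensional space carrying a Hodge-type inner product whose dimension drops by $2$ at a generic $\lambda\neq0$ (the tautological directions become trivial). Following the boundary-term analysis of \cite{F02}, \cite{AtF08} --- now for the twisted cohomological equation $Xu+2\pi i\lambda u=f$ --- one bounds the twisted ergodic integral by the twisted Hodge norm of the corresponding primitive, plus a lower-order contribution controlled by how fast the orbit approaches the cone points. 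This reduces the dynamical estimate to the growth of the \emph{twisted Kontsevich--Zorich cocycle}: the cocycle over the Teichm\"uller geodesic flow $g_\R$ on the bundle of twisted cohomologies, under which $g_t$ rescales the frequency parameter (because $g_t$ scales $\re(h)$), so that a forward orbit of length $\tau\approx\log T$ starting from parameter $\lambda$ reaches parameter $\approx\lambda\,T$.

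\smallskip
\noindent The second and main step is the Hodge-theoretic control of this cocycle. The decisive structural fact is that twisting kills the tautological direction which carries the top Kontsevich--Zorich exponent $1$: for $\lambda\neq0$ the class $[\re(h)]$ is twisted-exact (it equals $\nabla$ applied to the constant $-(2\pi i\lambda)^{-1}$), so, combined with the Forni spectral gap $\lambda_1>\lambda_2$ for Masur--Veech typical surfaces \cite{F02}, the top exponent of the twisted cocycle stays strictly below $1$ for all $\lambda\neq0$ --- uniformly on any frequency range $|\lambda|\ge\delta>0$, with additional decay as $|\lambda|\to\infty$ coming from integration by parts against the oscillation. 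Feeding in the quantitative recurrence of Teichm\"uller geodesics to a compact part of the stratum (Athreya \cite{At06}, as already used for Theorem~\ref{thm:eff_erg}) to upgrade the subexponential Oseledets behaviour and the cone-point boundary terms to genuine power laws, one then telescopes: split $[0,T]$ at the frequency scale $\sim 1/|\lambda|$ --- below it the twist is inert and Theorem~\ref{thm:eff_erg} supplies a $\lambda$-dependent constant --- while over the remaining $O(\log T)$ dyadic scales, where the renormalized frequency is $\ge1$, the product of twisted cocycle norms is $\ll T^{1-\alpha_\kappa''}$; dividing by $T$ gives the asserted decay. The qualitative weak mixing of Theorem~\ref{thm:weak_mixing} is recovered along the way, the strict inequality above being exactly the cohomological form of Veech's criterion \cite{Ve84} combined with non-degeneracy of the twisted cocycle.

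\smallskip
\noindent The hard part is the uniform Hodge control of the twisted cocycle across \emph{all} frequencies. Since a forward geodesic pushes the renormalized parameter $\lambda\,e^\tau$ through every scale, one needs the estimate to be uniform over the whole parameter range and to interpolate between the unique-ergodicity mechanism (small parameter) and the oscillatory decay (large parameter); the twisted cohomology $H^1(M,\mathcal L_\lambda)$ degenerates as $\lambda\to0$ (its dimension jumps and the twisted Hodge metric blows up near the killed tautological class, $\Vert(2\pi i\lambda)^{-1}\Vert\to\infty$), so the low-frequency regime cannot be treated by perturbation and must be grafted onto Theorem~\ref{thm:eff_erg} at the matching scale without spoiling the uniform exponent. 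Finally, making the spectral-gap estimate effective --- polynomial rather than merely positive --- requires the quantitative non-divergence input, and tracking the $\lambda$-dependence of $C_h(\lambda)$ so that $\alpha_\kappa''$ remains independent of $\lambda$. These are the core analytic contributions of \cite{F22a}.
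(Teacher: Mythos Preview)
Your cohomological framework --- twisted line bundle, twisted cohomology, twisted Kontsevich--Zorich cocycle, twisted Hodge norm --- is correct and is indeed the machinery of \cite{F22a}.  But the argument has two genuine gaps, the second of which is fatal.

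\smallskip
\noindent First, the mechanism you give for the twisted spectral gap is the wrong one.  You write that ``twisting kills the tautological direction'' and that combining this with the untwisted gap $\lambda_2<\lambda_1=1$ bounds the top twisted exponent below $1$.  But the twisted cohomology $H^1_\eta(M,\C)$ is not a subquotient of $H^1(M,\C)$; its Hodge structure and its cocycle are genuinely different, and the untwisted Oseledets spectrum does not control it.  The actual bound (Lemma~\ref{lemma:Lambda_twist} here, Lemma~5.4 of \cite{F22a}) is a direct Cauchy--Schwarz argument on the twisted second fundamental form $B_{(M,h,\eta)}$: one has $\Lambda^\#(M,h,\eta)<1$ \emph{if and only if} $[\eta]\notin H^1(M,\Z)$.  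The integer lattice, not the tautological plane, is where the gap closes.

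\smallskip
\noindent Second, and this is the real omission, your outline contains no \emph{elimination} step.  The effective Veech criterion (Theorem~\ref{thm:effective_criterion}) is conditional: it gives polynomial decay provided the toral Kontsevich--Zorich orbit $g^{KZ}_t(M,h,\lambda[\re(h)])$ spends a positive fraction of time \emph{away} from the zero section of $H^1_\kappa(M,\T)$.  Your dichotomy ``small $\lambda$ versus large $\lambda$'' does not address this: the renormalized parameter $\lambda e^t[\re(h_t)]$ being large in norm says nothing about its distance to the integer lattice, and for badly chosen $(M,h)$ that orbit can in fact converge to $H^1_\kappa(M,\Z)$ in density (this is exactly the effective weak stable space $\mathcal W^{eff}_{(M,h)}$ of Definition~\ref{def:eff_weak_stable}).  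What the paper does is bound the Hausdorff codimension of $\mathcal W^{eff}_{(M,h)}$ from below by the unstable dimension of the Kontsevich--Zorich cocycle, and then run a Fubini argument against the product structure of the Masur--Veech measure (the ``linear elimination'' of Theorem~\ref{thm:lin_elimin}) to conclude that for typical $(M,h)$ the line $\R[\re(h)]$ misses $\mathcal W^{eff}_{(M,h)}$ entirely.  This Diophantine step --- why typical $[\re(h)]$ is, for every $\lambda$, repelled from the integer lattice under the toral cocycle --- is the heart of the proof, and it is absent from your proposal.
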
 
 \noindent Once more, the argument can be generalized to prove the polynomial weak mixing property for $\mu$-typical
translation flows for all $\SL(2, \R)$-invariant measures supported on orbit closure of {\it rank} at least $2$. 
The rank one case, including the case of  non-arithmetic Veech surfaces is open.

\begin{problem} 
Establish polynomial weak mixing for directionally typical translation flows on non-arithmetic Veech translation surfaces.
\end{problem}
  
 \bigskip
 \noindent  All of the above results are concerned with translation flows and with rational billiard flows 
 We conclude this section with a digression on general polygonal billiards, whose ergodic theory is poorly understood. 
 \begin{conjecture}  
 For almost all (simply connected) polygonal tables the billiard flow is ergodic and weakly mixing (in the $3$-dimensional
 phase space).
 \end{conjecture}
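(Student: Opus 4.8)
\medskip
\noindent \emph{A research plan.} Since this is an open problem I can only propose a line of attack and indicate where it stalls. The plan would be to approximate a non-rational table $P$ by rational tables. Parametrize simply connected $k$-gons by their angles $(\theta_1,\dots,\theta_k)$ (subject to the angle-sum relation), suitably normalized, and put Lebesgue measure on this parameter space. Given $P$, I would pick rational tables $P_n$, with angles $\pi m_i^{(n)}/q_i^{(n)}$, so that $\varepsilon_n:=\max_i|\theta_i-\pi m_i^{(n)}/q_i^{(n)}|$ decreases extremely fast, and then work on the unfolding surfaces $M_{P_n}$. The goal is a Borel--Cantelli argument producing a full-measure set of tables $P$ for which the billiard flow inherits unique ergodicity on almost every invariant surface — equivalently, ergodicity on the $3$-dimensional phase space — which is essentially the ergodicity criterion of Vorobets \cite{Vo97}.

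\medskip
\noindent \emph{The steps.} First, fix a direction $\theta$ and a time horizon $T_n$, and apply the effective unique ergodicity estimate of Theorem~\ref{thm:eff_erg} on $M_{P_n}$ to control the ergodic averages of Lipschitz observables along the straight-line flow up to time $T_n$, with error of order $K_{h_n}(\theta)\,T_n^{-\alpha_{\kappa_n}}$. Second, a shadowing step: a billiard orbit in $P$ and the corresponding orbit in $P_n$ stay $O(\varepsilon_n)$-close for $O(1/\varepsilon_n)$ collisions, hence for time $O(1/\varepsilon_n)$, so one would take $T_n\sim\varepsilon_n^{-\delta}$ for a suitable $\delta\in(0,1)$. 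Third, transfer the estimate from $M_{P_n}$ back to the billiard in $P$ and sum over $n$: if one can arrange $\sum_n K_{h_n}(\theta)\,T_n^{-\alpha_{\kappa_n}}<\infty$ along a full-measure set of $P$, ergodicity follows; and, running the same machinery with twisted observables — the method behind Theorem~\ref{thm:weak_mixing_eff} together with a Veech-type criterion adapted to polygons — one would hope to reach weak mixing as well.

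\medskip
\noindent \emph{The main obstacle.} As stated the scheme is circular. Since $q_i^{(n)}\to\infty$, the genus formula quoted above forces the genus of $M_{P_n}$, hence the complexity of the stratum $\mathcal{H}_{\kappa_n}$, to blow up. Two things then go wrong. The exponent $\alpha_{\kappa_n}$ is not known to stay bounded away from $0$ over all strata — it is controlled by the spectral gap $1-\lambda_2$ of the Kontsevich--Zorich cocycle, which can degenerate — so $\alpha_{\kappa_n}$ may tend to $0$. More seriously, the constant $K_{h_n}(\theta)$ is governed by the recurrence of the Teichm\"uller geodesic through $M_{P_n}$ to a fixed compact part of the stratum, and for these very special, non-Masur--Veech-typical surfaces this recurrence is not quantified in terms of $n$. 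Without a quantitative equidistribution statement uniform in the stratum and in the arithmetic surface, there is no way to fix the rate $\varepsilon_n$ in advance so as to beat an a priori unknown rate of deterioration. Supplying such uniform bounds for the arithmetic surfaces coming from unfoldings — or, alternatively, finding a renormalization acting directly on the billiard and bypassing the unfolding — is, I think, the crux.

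\medskip
\noindent \emph{Why weak mixing is strictly harder.} Even granting ergodicity, the absence of eigenfunctions is not an approximation-stable property: a sequence of weakly mixing translation flows can, in the limit, acquire point spectrum from the degenerating geometry — for instance if infinitely many $P_n$ factor over a circle with commensurable cylinders in the relevant direction, as recalled before the Conjecture above. One would therefore also need to show that almost every $P$ admits approximations avoiding such resonances, together with a \emph{twisted} effective estimate uniform along the $M_{P_n}$, i.e.\ polynomial decay in mean of the twisted ergodic integrals $\frac{1}{T}\int_0^T e^{2\pi i\lambda t} f\circ\phi_t\,dt$ with constants controlled in $n$. I expect this half of the conjecture to require a genuinely new idea, beyond the Hodge-theoretic effective estimates of these notes.
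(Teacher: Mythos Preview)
This statement is a \emph{conjecture} in the paper, not a theorem; the paper gives no proof and explicitly presents it as open. You correctly recognize this and offer a research plan rather than a proof, which is the only honest thing to do here.

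Your proposed line of attack --- fast approximation by rational tables, with effective ergodic-theoretic input on the unfolding surfaces $M_{P_n}$ --- is precisely the mechanism the paper cites for the known partial results: Kerckhoff--Masur--Smillie for a $G_\delta$ dense set of ergodic tables, Vorobets for an explicit ergodicity criterion, and Chaika--Forni for a $G_\delta$ dense set of weakly mixing tables. The paper itself remarks that these results are ``proved by fast approximation based on the corresponding properties of rational billiards''. So your framework is not new, but it is the right one, and your diagnosis of why it has not yielded the full-measure statement is accurate: the constants $K_{h_n}(\theta)$ and exponents $\alpha_{\kappa_n}$ in Theorem~\ref{thm:eff_erg} are not controlled uniformly as the genus of $M_{P_n}$ goes to infinity, and the unfolding surfaces are never Masur--Veech typical, so one cannot invoke generic recurrence estimates. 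Vorobets's work comes closest to making the constants explicit in the genus, but his bounds are not polynomial and still do not close the gap. Your identification of the need for ``quantitative equidistribution uniform in the stratum and in the arithmetic surface'' is exactly the missing ingredient; nobody currently knows how to supply it.
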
 
 \noindent It was proved by S.~Kerckoff, H.~Masur and J.~Smillie \cite{KMS86} that there exists a $G_\delta$ dense set of polygons
 with ergodic billiard flow. Recently J.~Chaika and the author \cite{CF} have  proved that there exists a $G_\delta$ dense set 
 of polygons with weakly mixing billiard flow.  
It is not known whether there exists a polygon with mixing billiard flow, although numerical evidence \cite{CP99}  suggests that it is the case for almost all acute triangles.  However, while (unique) ergodicity and weak mixing are $G_\delta$ properties, dense in the space of measure preserving transformations  \cite{Ha44}, mixing is not \cite{Ro48}. The above-mentioned results on ergodicity and weak mixing of general polygons are proved by fast approximation based on the corresponding properties of rational billiards, an approach that cannot be applied to prove existence of mixing  polygons.
 
 The striking gap in our knowledge of the dynamics of billiards in general polygons when compared to rational billiards is
 related to the discovery (by Rauzy, Masur, Veech) of a renormalization dynamical system for the dynamics of Interval Exchange
 Transformations and Translation flows.

\section{(Effective) unique ergodicity: a cohomological approach} 
 \noindent In this section we review the cohomological approach, based on Hodge theory, to effective unique ergodicity expounded in a series of work including~\cite{F02}, \cite{AtF08}, \cite{FM14}, \cite{Tre14}.  
 
\noindent Recently, it has been rediscovered by C.~McMullen \cite{McM20}, who has often presented it without reference to the much earlier above-mentioned work in his subsequent papers (see for instance \cite{McM23}, page 210).
 
 \smallskip
 \noindent The cohomological approach has its roots in A.~Katok's \cite{Ka73} proof of the finiteness of the cone of invariant measures for quasi-minimal flows on surfaces with saddle-like singularities.  
 
 \subsection{Katok's finiteness theorem}
 
\noindent  In this section,  we outline, following \cite{Ka73}, the proof of the following fundamental result.
 
\begin{theorem} \cite{Ka73}  
\label{thm:Katok_bound}
The cone of invariant measures for any minimal translation flow on a surface $M$ of genus $g \geq 1$  has dimension at most $g$. 
\end{theorem}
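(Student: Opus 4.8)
The plan is to attach to each invariant measure a \emph{Schwartzman asymptotic cycle} living in $H^1(M;\R)\cong\R^{2g}$, and to prove that the resulting linear correspondence is injective with image contained in a subspace on which the cup product vanishes. Since a maximal isotropic subspace of the $2g$-dimensional symplectic space $(H^1(M;\R),\cup)$ has dimension $g$, this bounds the dimension of the linear span of the invariant measures — hence of their cone — by $g$.

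Normalize so that the given flow is the vertical flow $\phi^Y_\R$ of a translation surface $(M,h)$, and let $\mathcal M$ be the real vector space of $\phi^Y$-invariant signed Borel measures. To $\mu\in\mathcal M$ I associate the de Rham $1$-current $\mathfrak C_\mu$ defined on smooth $1$-forms by $\langle\mathfrak C_\mu,\alpha\rangle=\int_M\alpha(Y)\,d\mu$. The first step is to check that $\mathfrak C_\mu$ is closed: on an exact form $\alpha=df$ one has $\alpha(Y)=Yf$, whose $\mu$-integral vanishes by invariance; thus $\mathfrak C_\mu$ determines a class $[\mathfrak C_\mu]\in H^1(M;\R)$, and $\mu\mapsto[\mathfrak C_\mu]$ is linear. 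As a sanity check, since the vertical form $\omega_Y=\im h$ is closed with $\omega_Y(Y)\equiv1$, we get $\langle\mathfrak C_\mu,\omega_Y\rangle=\mu(M)$, so no nonzero positive measure is annihilated; the content of injectivity is to rule out cancellation in the signed directions.

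Next I would establish the two key properties. \emph{Isotropy.} By the Jordan decomposition, which the flow preserves, $\mathcal M$ is spanned by positive measures, so by bilinearity of $\cup$ it suffices to show $[\mathfrak C_\mu]\cup[\mathfrak C_\nu]=0$ for probability measures $\mu,\nu$. Represent the classes geometrically: by Birkhoff's ergodic theorem applied to the homology cocycle of the flow, for $\mu$-a.e.\ $x$ the homology class of the orbit arc $\phi^Y_{[0,T]}(x)$, closed up by an auxiliary bounded path avoiding the cone points, is asymptotic as $T\to\infty$ to $T\,[\mathfrak C_\mu]$, and likewise for $\nu$. Two such orbit arcs run along the pairwise disjoint regular leaves of the vertical foliation, so their geometric intersection number is bounded independently of the two lengths $T,S$; dividing by $TS$ and letting $T,S\to\infty$ gives $[\mathfrak C_\mu]\cup[\mathfrak C_\nu]=0$. \emph{Injectivity.} Suppose $[\mathfrak C_\mu]=0$. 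An invariant measure for $\phi^Y$ is the same datum as a holonomy-invariant transverse signed measure $\tau$ for the vertical foliation, and $[\mathfrak C_\mu]=0$ is equivalent to the vanishing of the periods of $\tau$, so the primitive $p\colon M\to\R$ — defined on each horizontal transversal by integrating $\tau$ from a fixed basepoint — is globally single-valued. This $p$ is constant along leaves and, if $\mu\ne0$, nonconstant on $M$, so a generic level set of $p$ is a proper nonempty closed leaf-saturated subset, contradicting minimality. Hence $\mathcal M\hookrightarrow H^1(M;\R)$ with isotropic image, so $\dim\mathcal M\le g$, and the cone of invariant measures, which spans $\mathcal M$, has dimension at most $g$.

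The main obstacle is the injectivity step, and the delicate point inside it is the regularity of the primitive $p$: one must check that a holonomy-invariant transverse measure on a minimal foliation has no atoms (an atom would be a single leaf carrying positive mass consistently along its dense set of returns to a transversal, which is impossible), so that $p$ is continuous and its level sets are genuinely closed and saturated, and one must handle the cone-point separatrices. A more hands-on alternative that avoids foliation technology is to pass to a cross-section: a horizontal segment transverse to $\phi^Y$ meets every orbit, the first-return map is a minimal interval exchange $T$ on some number $n$ of intervals associated to $(M,h)$, the invariant measures of $\phi^Y$ correspond to those of $T$, and the same cohomological argument can be run in the relative homology $H_1(M,\Sigma;\R)$ where the period map of the exchange is fully explicit; there injectivity becomes a statement about the kernel of an explicit integer ``coboundary'' matrix, while isotropy is again the disjointness of interval-exchange orbit segments.
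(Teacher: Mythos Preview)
Your proof is correct and follows essentially the same route as the paper: associate to each invariant measure its flux class in $H^1(M;\R)$, prove injectivity by constructing a continuous flow-invariant primitive (the paper's current $U$ with $dU=\imath_V(\mu_1-\mu_2)$ is exactly your $p$), and prove isotropy via Birkhoff and an intersection count for closed-up orbit arcs. One small slip in the isotropy step: the geometric intersection number of the two closed-up arcs is not bounded independently of $T,S$ but is $O(T+S)$, since the bounded closing segment of one can meet the long orbit arc of the other linearly many times; this still vanishes after dividing by $TS$, so your conclusion is unaffected.
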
 
\noindent This result sharpens an earlier finiteness result by Oseledets (who proved for IET's an upper bound equivalent to  $2g + \# \Sigma -1$ for the dimension of the cone of invariant measures of a translation flows on a surface of genus $g$ with singularity set $\Sigma$). 

\noindent In the case $M=\T^2$ ($g=1$), Katok's upper bound already implies that all (linear) minimal flows are uniquely ergodic. 

\medskip
\noindent The first step of Katok's argument consists in introducing the (flux) cohomology class of an invariant probability measure. 
Let $\phi^V_\R$ denote a flow with generator $V$ on $M$ and let $\mu$ an invariant probability measure for $\phi^V_\R$.

\begin{definition}  The flux current $\imath_V \mu$ of the invariant probability measure $\mu$ for the flow $\phi^V_\R$ is the closed current  of dimension~$1$ (and degree~$1$, since $M$ has dimension$2$) defined as follows:
$$
(\imath_V \mu) (\alpha) = \int_M   \imath_V \alpha \, d\mu \,, \quad \text{ for every $1$-form }\alpha \in \Omega^1(M,\C)\,.
 $$
(The symbol $\imath_V$ denote the contraction operator with the vector field $V$ on differential forms, which can be extended to
currents by duality).

\noindent The (flux) cohomology class $F(\mu) \in H^1(M, \R)$ of the invariant probability measure $\mu$ is the (de Rham) cohomology class of the flux form of $\mu$, that is, 
$$
F(\mu) :=  [ \imath_V \mu ]  \in H^1(M, \R) \,.
$$
\end{definition} 
\noindent {\bf Note}: We adopt here the point of view of L.~Schwartz and G.~de Rham that a current of dimension $k\in \N$ is a continuous linear functional on the Fr\'echet space of smooth $k$-forms, and that every closed current of dimension $k$ has a 
well-defined $k$-cohomology class,  defined as the de Rham cohomology class of an appropriate de Rham regularization 
(that is a smooth closed $k$-form).

\smallskip
\noindent In the above definition, it remains to prove that the current $\imath_V \mu$ is indeed closed. This property follows
from the invariance of $\mu$ with respect to $\phi^V_\R$. In fact, we have to prove that the boundary $b(\imath_V \mu)$ or,
equivalently, its differential $d(\imath_V \mu)$ vanish. For every $f \in \Omega^0(M, \C)$, we can compute
$$
\begin{aligned}
d (\imath_V \mu) (f) &=  (\imath_V \mu) (df) =  \int_M   \imath_V df  \, d\mu  \\ &=   \int_M  Vf  \, d\mu 
= \frac{d}{dt}  \left( \int_M  f \circ \phi^V_t   \, d\mu  \right) \Big\vert_{t=0}  = 0\,.
\end{aligned}
$$
We have introduced the language of currents since it is essential in proving effective results. However, following \cite{Ka73}
it is possible to bypass the flux current and directly define the flux cohomology class as an element of the dual $H_1(M,\R)^\ast$.
It is sufficient to define the flux class $F(\mu)$ on a basis of the homology $H_1(M,\Z)$ and then extend it by linearity. As
as basis of the homology, we can choose a canonical basis $\{a_1, b_1, \dots, a_g, b_g\}$ of cycles. It is therefore sufficient to
define the flux through a loop and prove that it only depends on the homology class of the loop. 

\smallskip 
\noindent Indeed, the contraction $\imath_V \mu$ of the invariant measure $\mu$ can be interpreted as a {\it transverse
invariant measure} for the orbit foliation of $\phi^V_\R$ in the following sense.  For every rectifiable arc $I \subset M$, we define
its (transverse) measure $(\imath_V \mu) (I)$ as 
$$
(\imath_V \mu) (I) :=  \lim_{t\to 0}  \frac{1}{t}  \mu \left( \cup_{s\in [0,t]} \phi^V_t (I) \right)
$$
It can be proved that the limit exists and that, in addition,  the one-dimensional transverse measure $\imath_V \mu$ is 
invariant for the orbit foliation: for every pair of transverse arcs $[p, q]$ and $[p', q']$ with endpoints $p,q$ and 
$p', q'$ respectively, such that $p' \in \phi^V_{\R^+}(p)$ and $q' \in \phi^V_{\R^+}(q)$  we have
$$
(\imath_V \mu) ([p,q]) =  (\imath_V \mu) ([p',q'])\,.
$$
The above properties follows from the fact that by Jordan curve theorem the union of the transverse intervals $[p,q]$, $[p',q']$  and of the orbits segments joining $p$ to $p'$ and $q$ to $q'$ bounds a (simply connected) domain $D(p,q,p',q')$ such that, for all
$t>0$, by the invariance of the measure $\mu$ with respect to $\phi^V_\R$, we have 
$$
\begin{aligned}
\mu& \left( \cup_{s\in [0,t]} \phi^V_t ( [p',q']) \setminus \cup_{s\in [0,t] } \phi^V_t ([p,q] )    \right)  \\ & \quad = 
\mu ( \phi^V_t (D(p,q, p',q') ) -  \mu ( D (p,q, p',q') ) =0 \,.
\end{aligned}
$$
Once the notion of a cohomology class of an invariant measure, the argument proceeds by the following steps:
\begin{itemize}
\item if the flow $\phi^V_\R$ is minimal, then the map flux class map $F: \mathcal C_V \to H^1(M, \R)$ is injective from the cone $\mathcal C_V$ of its invariant measures into the cohomology;
\item the image of the map $F: \mathcal C_V \to H^1(M, \R)$ is contained in a Lagrangian subspace for the natural symplectic
structure on $H^1(M, \R)$ (hence it has dimension $\leq g$).
\end{itemize}
The natural symplectic structure on $H^1(M, \R)$ expresses a fundamental symmetry in the structure of deviations of
ergodic averages of translations flows and related dynamical systems.

\smallskip
\noindent We now outline the proof of the above two steps:

\begin{lemma} 
\label{lemma:inj}
If $\phi^V_\R$ is minimal, then $F: {\mathcal C}_V \to H^1(M, \R)$ is injective. 
\end{lemma}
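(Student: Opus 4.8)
The plan is to argue by contradiction, exploiting the transverse‑measure interpretation of the flux current recalled above. Suppose $\mu_1,\mu_2\in\mathcal C_V$ are invariant probability measures with $F(\mu_1)=F(\mu_2)$, and set $\nu:=\mu_1-\mu_2$, a finite signed $\phi^V_\R$‑invariant measure whose flux current $\imath_V\nu$ is exact, i.e. $[\imath_V\nu]=0$ in $H^1(M,\R)$. The goal is to show $\nu=0$. As explained above, $\imath_V\nu$ represents a (signed) holonomy‑invariant transverse measure $m_\nu$ for the orbit foliation of $\phi^V_\R$, and the hypothesis $[\imath_V\nu]=0$ says precisely that the flux of $\nu$ through every $1$‑cycle vanishes.

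First I would reduce the problem to a cross‑section. Fix a transversal $\tau$ to the flow — a finite union of arcs; for a minimal translation flow one may take, say, a short horizontal segment when $V$ is the vertical vector field, with first‑return map a minimal interval exchange. Restricting $m_\nu$ to $\tau$ yields a finite signed Borel measure on $\tau$, invariant under the return map. It is enough to prove $m_\nu|_\tau=0$: by holonomy invariance and minimality every orbit meets $\tau$, so $m_\nu|_\tau=0$ forces $m_\nu=0$ on all transversals, that is $\imath_V\nu=0$ as a current; testing against $1$‑forms $\alpha$ for which $\imath_V\alpha$ is an arbitrary continuous function supported off the finite zero set of $V$ (which is $\nu$‑null, since an invariant probability measure of a minimal translation flow charges no cone point) then gives $\int_M f\,d\nu=0$ for all continuous $f$, hence $\nu=0$.

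Next I would kill $m_\nu$ on $\tau$ using the vanishing of the flux. Given any sub‑arc $\sigma\subset\tau$ whose endpoints $p$ and $q=\phi^V_T(p)$, with $T>0$, lie on a common orbit, let $\gamma_\sigma$ be the loop obtained by following the orbit segment $\phi^V_{[0,T]}(p)$ and then $\sigma$ from $q$ back to $p$. Since $\imath_V\nu$ annihilates vectors tangent to the flow, it integrates to zero over the orbit part, whence $\langle F(\nu),[\gamma_\sigma]\rangle=\pm\,m_\nu(\sigma)$; as $F(\nu)=0$, this gives $m_\nu(\sigma)=0$. By minimality the forward orbit of every point of $\tau$ returns to $\tau$ and does so densely, so the arcs $\sigma$ arising this way have endpoints forming a dense subset of $\tau\times\tau$; a finite signed Borel measure vanishing on such a dense family of arcs is zero, so $m_\nu|_\tau=0$, completing the proof.

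The step requiring the most care is the identity $\langle F(\nu),[\gamma_\sigma]\rangle=\pm\,m_\nu(\sigma)$ together with the current‑theoretic bookkeeping behind it: one must know that the order‑zero current $\imath_V\nu$ admits a smooth de Rham representative differing from it by an exact current $dU$, so that its pairing with the piecewise‑smooth cycle $\gamma_\sigma$ is computed by genuine integration and the orbit‑tangential contribution really is $0$. As in Katok's original treatment, one can instead bypass currents altogether: define the flux directly as a functional on $H_1(M,\Z)$ by integrating the holonomy‑invariant transverse measure against the signed crossings of a loop, verify that it descends to homology — this is exactly the closedness computation $d(\imath_V\nu)(f)=\tfrac{d}{dt}\int_M f\circ\phi^V_t\,d\nu\big|_{t=0}=0$ already performed — and then run the density argument purely combinatorially on the return interval exchange associated to $\tau$. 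A secondary, routine point is the $\nu$‑nullity of the cone points used in the last reduction of the second paragraph.
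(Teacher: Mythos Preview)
Your argument is correct and follows essentially Katok's original transverse-measure route, which the paper mentions as an alternative but does not itself take. The paper's own proof is shorter and more current-theoretic: from $[\imath_V(\mu_1-\mu_2)]=0$ it invokes de Rham's theorem to produce a degree-$0$ current $U$ with $dU=\imath_V(\mu_1-\mu_2)$, observes $VU=\imath_V dU=\imath_V^2(\mu_1-\mu_2)=0$ so that $U$ is flow-invariant, notes that $U(p)-U(q)=\int_{[p,q]}\imath_V(\mu_1-\mu_2)$ on any transverse arc so that $U$ is continuous (the transverse measure being atomless by minimality), and concludes that $U$ is constant by minimality; this forces $\imath_V(\mu_1-\mu_2)=0$ and then $\mu_1=\mu_2$ since $\imath_V:\Omega^1\to\Omega^0$ is onto. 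Your approach trades the primitive $U$ for an explicit computation of the flux through loops built from an orbit segment and a transverse arc $\sigma$, deducing $m_\nu(\sigma)=0$ on a dense family of arcs and then passing to the limit (which, as you should make explicit, uses that $m_\nu$ is atomless). The paper's argument is slicker and avoids fixing a cross-section; yours is more elementary and makes the homological content explicit. The two are in fact dual: your identity $m_\nu(\sigma)=0$ for $\sigma$ with orbit-related endpoints is exactly the statement $U(p)=U(q)$ for $q$ on the orbit of $p$, i.e.\ the flow-invariance of the paper's $U$, and your density step plays the role of the continuity-plus-minimality step there.
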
 
\begin{proof} Let us assume that for $\mu_1$, $\mu_2 \in {\mathcal C}_V$ we have $[\imath_V(\mu_1)]=[\imath_V(\mu_2)] 
\in H^1(M, \R)$. By the de Rham theorem there exists a current $U$ of dimension $2$ (and degree $0$) such that
$$
dU = \imath_V( \mu_1) - \imath_V(\mu_2)\,.
$$
From the above identity, since $\imath_V^2=0$, it follows immediately that
$$
V U = \imath_V (d U ) = \imath^2_V( \mu_1) - \imath^2_V(\mu_2)=0\,,
$$
hence $U$ is invariant with respect to $\phi^V_\R$. In addition, for  any transverse interval $I \subset M$ and
for any $p, q \in I$ we have, by integrating along the subsegment $[p,q] \subset I$ with enpoints $p$, $q$,
$$
U(p)- U(q) = \int_{[p,q]}  \imath_V( \mu_1-\mu_2) \,.
$$
Since by minimality of the flow,  the current $\imath_V( \mu_1-\mu_2)$ is a continuous transverse measure (it has no atoms), the current $U$ is a continuous function. Since it is invariant $U$ is a constant function, again by the minimality of the flow.  It follows
that $ \imath_V( \mu_1) = \imath_V(\mu_2)$ and, since the contraction operator $\imath_V : \Omega^1(M, \R) \to \Omega^0(M, \R)$ is surjective, it follows that $\mu_1= \mu_2$, thereby concluding the argument.
\end{proof} 

\noindent For every $(p, T) \in M \times \R^+$, let $\gamma^V_T(p)$ denote the (oriented) orbit segment (defined for almost all $p \in M$ for all $T>0$)
$$
\gamma^V_T(p) = \bigcup_{0\leq t \leq T} \{\phi^V_t(p)\} \,,.
$$
For any pair of points $p, q \in M$, let $I(p,q) \subset M$ denote an (oriented) arc of uniformly bounded length (with respect to
a fixed Riemannian metric) joining $p$ to $q$,
union of a subsegment of a transverse arc $I \subset M$ with orbit segments joining $p$ and $q$ to $I$. Let then
$$
\overline{\gamma}^V_T(p) :=  \gamma^V_T(p) \cup I(\phi^V_T(p), p) \,.
$$
\begin{lemma} 
\label{lemma:image}
For any pair of ergodic invariant measures $\mu_1$ and $\mu_2$ for $\phi^V_\R$, we have that
the  intersection of their flux forms vanishes
$$
F (\mu_1) \wedge F(\mu_2) =0 \,,
$$
\end{lemma}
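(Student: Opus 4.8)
The plan is to interpret the wedge product $F(\mu_1)\wedge F(\mu_2)$, which by de Rham duality equals the cohomological intersection number, as a limit of geometric intersection numbers of long orbit segments, and then to show this limit vanishes because two orbit segments of a flow (even with different invariant measures) can be made to intersect with controlled, subquadratic frequency relative to their lengths.

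First I would recall that for a closed $1$-current $F(\mu)$, the cohomology class $[\imath_V\mu]\in H^1(M,\R)$ is computed on a cycle $c$ by $\langle F(\mu),c\rangle$, and that by the ergodic theorem the normalized closed curves $\overline\gamma^V_T(p)/T$ converge, as currents, to $\imath_V\mu$ for $\mu$-a.e.\ $p$ (the arc $I(\phi^V_T(p),p)$ contributes a bounded current, hence disappears after normalization). Concretely, for a smooth closed $1$-form $\alpha$ representing a cohomology class, $\frac1T\int_{\overline\gamma^V_T(p)}\alpha=\frac1T\int_0^T(\imath_V\alpha)\circ\phi^V_t(p)\,dt\to\int_M\imath_V\alpha\,d\mu=F(\mu)(\alpha)$. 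So I would fix generic points $p_1$ (for $\mu_1$) and $p_2$ (for $\mu_2$) and write
$$
F(\mu_1)\wedge F(\mu_2)=\lim_{T\to\infty}\frac{1}{T_1 T_2}\,\bigl[\overline\gamma^V_{T_1}(p_1)\bigr]\cdot\bigl[\overline\gamma^V_{T_2}(p_2)\bigr],
$$
where on the right the dot is the algebraic intersection number of two closed curves on the surface $M$, and $T_1,T_2\to\infty$ along suitable sequences.

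The second step is to bound the algebraic intersection number by the geometric one and then estimate the latter. Two orbit segments $\gamma^V_{T_1}(p_1)$ and $\gamma^V_{T_2}(p_2)$ of the same flow $\phi^V_\R$ are arcs of leaves of the same (oriented, nonsingular on a full-measure set) foliation; distinct leaves of an orientable foliation on a surface do not cross transversally, so the only crossings come from the closing arcs $I(\phi^V_{T_i}(p_i),p_i)$, which have uniformly bounded length. A closing arc, being a fixed bounded arc transverse to the flow plus bounded flow segments, meets a flow segment $\gamma^V_{T}(p)$ in at most $O(T)$ points (its transverse measure times $T$, up to bounded error), and meets the other closing arc in $O(1)$ points. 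Hence
$$
\Bigl|\bigl[\overline\gamma^V_{T_1}(p_1)\bigr]\cdot\bigl[\overline\gamma^V_{T_2}(p_2)\bigr]\Bigr|\le C\,(T_1+T_2+1),
$$
and dividing by $T_1T_2$ and letting $T_1,T_2\to\infty$ (say $T_1=T_2=T$) gives $F(\mu_1)\wedge F(\mu_2)=0$.

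The main obstacle is making the current-convergence statement precise enough that passing to the wedge product (a nonlinear operation, discontinuous in general for the weak topology on currents) is legitimate: one cannot simply wedge two weak limits. The fix is to not take a limit of a wedge of limits, but to realize the wedge product $F(\mu_1)\wedge F(\mu_2)$ directly as the cup product $\langle [\imath_V\mu_1]\smile[\imath_V\mu_2],[M]\rangle$ computed via the Poincar\'e dual cycles, choosing as a representative of the Poincar\'e dual of $F(\mu_1)$ the class of a single long closed curve $\overline\gamma^V_{T_1}(p_1)$ (up to an error $o(T_1)$ in $H_1(M,\R)$, by the ergodic theorem applied to a basis of $H^1$), and similarly for $\mu_2$; then the cup product is computed as an honest intersection number of curves, to which the geometric bound above applies with the error terms absorbed. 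One should also check the integrability needed for the closing arcs — that for a.e.\ $p_i$ the orbit endpoint $\phi^V_{T_i}(p_i)$ can be joined to $p_i$ by an arc of bounded length meeting the other orbit in $O(T)$ points — which follows from minimality (the transverse arc $I$ can be taken fixed, and return times to $I$ are the building blocks of the induced interval exchange, whose transverse measures are exactly the $\imath_V\mu_i$).
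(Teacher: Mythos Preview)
Your proof is correct and follows essentially the same route as the paper's: approximate each flux class by $\frac{1}{T}[\overline{\gamma}^V_T(p_i)]$ via the ergodic theorem, convert the cup product to an algebraic intersection number of loops by Poincar\'e duality, bound it by the geometric intersection number, and observe that distinct flow orbits are disjoint so all intersections come from the bounded closing arcs, giving $O(T)$ intersections and hence a vanishing limit after dividing by $T^2$.

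One remark: your ``main obstacle'' paragraph is overcautious. The convergence $\frac{1}{T}\overline{\gamma}^V_T(p_i)\to \imath_V\mu_i$ as currents, when paired against closed forms, is exactly convergence of the cohomology classes $\frac{1}{T}[\overline{\gamma}^V_T(p_i)]\to F(\mu_i)$ in the \emph{finite-dimensional} vector space $H^1(M,\R)$. The cup product is a continuous bilinear form there, so passing to the limit in $\frac{1}{T^2}[\overline{\gamma}^V_T(p_1)]\smile[\overline{\gamma}^V_T(p_2)]$ is immediate; no delicate analysis of wedge products of weak limits of currents is needed. The paper simply writes this limit down without comment for that reason.
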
 
\begin{proof}
The argument is based on Poincar\'e duality $P: H^1(M, \R) \to H_1(M, \R)$ and Birkhoff ergodic theorem.   
By the ergodic theorem, there exists $p_1, p_2$ in $M$, with distinct orbits, such that the following identity of currents holds : 
$$
\lim_{T\to +\infty} \frac{1} {T}  \overline{\gamma}^V_T(p_1) = \imath_V (\mu_1) \quad \text{ and }  \lim_{T\to +\infty} \frac{1} {T}  \overline{\gamma}^V_T(p_2) = \imath_V (\mu_2) \,.
$$
The above identity concerns currents of dimension $1$, hence it can be checked on smooth $1$-forms. For any ergodic measure $\mu$ and $\mu$-almost all $p\in M$, for every smooth $1$-form $\alpha$, by the ergodic theorem we have 
$$
\lim_{T\to +\infty} \frac{1} {T}  \< \overline{\gamma}^V_T(p), \alpha \> =  \lim_{T\to +\infty} \frac{1} {T} 
\int_0^T \imath_V \alpha (\phi^V_t(p) dt  =  \int_M \imath_V (\alpha) d\mu =  F(\mu) (\alpha) \,.
$$
The cup product $F(\mu_1) \wedge F(\mu_2)$ is therefore given as the limit
$$
F(\mu_1) \wedge F(\mu_2) = \lim_{T\to +\infty} \frac{1}{T^2}  \cdot ( [\overline{\gamma}^V_T(p_1)] \cap [\overline{\gamma}^V_T(p_2)] )\,.
$$
By Poincar\'e duality, the above cup product between cohomology classes is equal to the algebraic intersection number (in homology) between their Poincar\'e duals, which in turn is bounded by the total number of intersections between the loops
$\overline{\gamma}^V_T(p_1)]$ and $\overline{\gamma}^V_T(p_1)]$:
$$
 [\overline{\gamma}^V_T(p_1)] \cap [\overline{\gamma}^V_T(p_2)]  =  P [\overline{\gamma}^V_T(p_1)] \cap P [\overline{\gamma}^V_T(p_2)]   \leq  \#  \left( \overline{\gamma}^V_T(p_1)  \cap \overline{\gamma}^V_T(p_2)  \right)\,.
$$
Finally, we have
$$
\begin{aligned}
  \# &\left(\overline{\gamma}^V_T(p_1)  \cap \overline{\gamma}^V_T(p_2) \right)   =  \# \left({\gamma}^V_T(p_1)  \cap \gamma^V_T(p_2))  +  \# ({\gamma}^V_T(p_1)  \cap I(\phi^V_T(p_2), p_2) \right)  \\ & \quad +  \# \left( I(\phi^V_T(p_1), p_1)  \cap {\gamma}^V_T(p_2) \right) 
  +  \# \left( I(\phi^V_T(p_1), p_1)  \cap I(\phi^V_T(p_2), p_2) \right) \,,
\end{aligned} 
$$
so that since ${\gamma}^V_T(p_1)  \cap \gamma^V_T(p_2) = \emptyset$, as they are distinct orbits of a flow,  and since 
the arcs $ I(\phi^V_T(p_1), p_1)$ have uniformly bounded length,  we conclude that there exists a constant $C>1$ 
such that, for all $T>1$
$$
  \# \left(\overline{\gamma}^V_T(p_1)  \cap \overline{\gamma}^V_T(p_2) \right)  \leq  C  T  \,,
$$
hence 
$$
\begin{aligned}
F(\mu_1) \wedge F(\mu_2) &= \lim_{T\to +\infty} \frac{1}{T^2}  \cdot ( [\overline{\gamma}^V_T(p_1)] \cap [\overline{\gamma}^V_T(p_2)] )
\\ &\leq \lim_{T\to +\infty} \frac{1}{T^2}  \cdot ( \# \left(\overline{\gamma}^V_T(p_1)] \cap [\overline{\gamma}^V_T(p_2)\right) ) =0 \,.
\end{aligned}
$$
\end{proof} 

\begin{proof}[Theorem \ref{thm:Katok_bound}]  By Lemma~\ref{lemma:inj}, the map $F: \mathcal C_V \to H^1(M,\R)$
is injective and, since all  probability invariant measures are convex combinations of ergodic measures, by Lemma \ref{lemma:image} it has image contained in a Lagrangian subspace of the symplectic space $H^1(M, \R)$, endowed with the cup product structure. Since all Lagrangian susbspaces have dimension equal to half the dimension of the space, and $H^1(M, \R)$ 
has dimension~$2g$, the bound on the dimension of $\mathcal C_V$ follows. 
\end{proof} 

\subsection{Self-similar translation flows} 
\label{subsec:selfsim}

A self-similar translation flow is a translation flow rescaled by a  (linear) {\it pseudo-Anosov} map with according to the definition. below. Pseudo-Anosov maps were introduce by W.~Thurston in his work on diffeomorphisms of surfaces.  Those which stabilize
translation flows are of a special type, since their invariant foliations are orientable. In this section we outline a renormalization approach to the effective unique ergodicity of self-similar translation flows. Since the (effective, polynomial) unique ergodicity  of 
the unstable foliation of a hyperbolic (Anosov) diffeomorphism is closely related to the (effective, exponential) mixing of the diffeomorphism,  the argument outlined below is the core of a cohomological approach to exponential mixing for a class of 
pseudo-Anosov diffeomorphisms (see \cite{F22b}).

\begin{definition}
\label{def:self_similar}
 A translation flow $\phi^X_\R$  is self-similar if there exists a translation structure $(M,h)$ with horizontal and
vertical vector fields $(X,Y)$, a homeomorphism $\psi :M \to M$  and a real number $\lambda >1$ such that 
$$
\psi_\ast (X)  =  \lambda X  \quad \text{ and }  \quad   \psi_\ast (Y)  =  \lambda^{-1}  Y\,.
$$
The real number $\lambda$ is called  the dilation factor of the pseudo-Anosov map $\psi$.

\end{definition} 
We note that  the homeomorphism $\psi:M \to M$ which appears in the above definition is a pseudo-Anosov map with orientable
invariant foliations given by the orbit foliations of the translation flows $\phi^X_\R$ and $\phi^{Y}_\R$ and it is a diffeomorphism on the complement of the set $\Sigma_h= \{h=0\}$ of conical singularities of the translation surface.  
\smallskip 
Let $\psi^* : H^1(M, \R) \to H^1(M, \R)$ denote the linear  homomorphism induced by the pseudo-Anosov map $\psi$ on the first
cohomology of the surface. It follows from the definition that
$$
\psi^* ( \re(h) ) = \lambda^{-1} \re(h)  \quad \text{ and }  \quad  \psi^* ( \im(h) ) = \lambda  \im(h) \,.
$$
Since $\re(h)$ and  $\im(h)$ are closed $1$-form, by the de Rham theorem they have well-defined cohomology classes 
$[\re(h)]$ and $[\im(h)]\in H^1(M, \R)$. It follows that the spectrum of the linear
 map $\psi^*$  contains $\{\lambda, \lambda^{-1} \}$. Since $\lambda >1$ equals the maximal dilation of $\psi$ at all points in
 $M\setminus \Sigma_h$, it follows that 
 $$
  \lambda =  \rho (\psi)\,,  \quad   \text{the spectral radius of the linear map } \, \psi^*\,.
 $$
It can proved,  as a consequence of the Perron-Frobenius theorem, that $\lambda$ and $\lambda^{-1}$ are simple eigenvalues
of the linear map $\psi^*$. Another proof, based on Hodge theory, which holds in greater generality, is presented below
 (see Exercise \ref{exercise:simple_1}).

\smallskip
\noindent We now explain how to derive a proof of unique ergodicity of self-similar translation flows based on the ideas
of the previous section.

\begin{theorem} 
\label{thm:UE}
 All self-similar translation flows are uniquely ergodic.
\end{theorem}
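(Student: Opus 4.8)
The plan is to combine Katok's finiteness theorem (Theorem~\ref{thm:Katok_bound}) with the action of the pseudo-Anosov map $\psi$ on the cone $\mathcal{C}_X$ of invariant probability measures. First I would observe that since $\psi_\ast(X) = \lambda X$, the homeomorphism $\psi$ sends orbits of $\phi^X_\R$ to orbits of $\phi^X_\R$ (reparametrized by the constant factor $\lambda$), hence push-forward of measures $\mu \mapsto \psi_\ast \mu$ defines an affine automorphism of the cone $\mathcal{C}_X$. By Katok's theorem $\mathcal{C}_X$ is finite-dimensional (dimension $\le g$), and since it is a closed convex cone of probability measures it has finitely many extremal rays (the ergodic measures) or, more robustly, the flux class map $F\colon \mathcal{C}_X \to H^1(M,\R)$ is injective and intertwines the $\psi_\ast$-action on measures with the linear action of $(\psi^\ast)^{-1}$ (equivalently $\psi_\ast$ on homology) on cohomology classes.

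The key computation is then the behavior of $F$ under $\psi$. Since $\imath_X(\psi_\ast\mu)$ and the pull-back/push-forward of the flux current differ only by the scaling $\psi_\ast X = \lambda X$, one finds that $F(\psi_\ast \mu)$ is obtained from $F(\mu)$ by applying the linear map induced by $\psi$ on $H^1(M,\R)$, up to the factor $\lambda$ coming from the time reparametrization; concretely the image cohomology classes get multiplied along the expanding direction. Now recall from the discussion preceding the theorem that $\lambda$ is a \emph{simple} eigenvalue of $\psi^\ast$ with eigenline spanned by $[\im(h)]$ (the expanding direction), and that all invariant-measure flux classes lie in a single Lagrangian subspace $L \subset H^1(M,\R)$ by Lemma~\ref{lemma:image}. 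The restriction of the relevant linear map to $L$ therefore has a one-dimensional top eigenspace, spanned by the flux class of the canonical measure (the one coming from the flat area form, whose flux is $[\im(h)]$ up to normalization).

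From here the argument is a Perron--Frobenius / contraction argument on the finite-dimensional cone $F(\mathcal{C}_X)$. Any invariant probability measure $\mu$ has $F(\mu) \in L$; iterating $\psi_\ast$ and renormalizing, the flux classes $F(\psi_\ast^n \mu)$ converge projectively to the top eigenline $\R[\im(h)]$ because the map expands that direction by $\lambda$ and the complementary eigenvalues on $L$ are strictly smaller in modulus (as $\lambda$ is the spectral radius and simple). But $\psi_\ast^n\mu$ is again an invariant probability measure, and if the cone $\mathcal{C}_X$ were more than one-dimensional it would have an extremal ergodic measure $\mu_0$ whose flux class is extremal in $F(\mathcal{C}_X)$; the automorphism $\psi_\ast$ permutes extremal rays, so some power fixes the ray of $\mu_0$, forcing $F(\mu_0)$ to be an eigenvector of the linear map with eigenvalue of modulus $\lambda$ (after normalization) — hence proportional to $[\im(h)]$ by simplicity. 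Thus every ergodic invariant measure has the same flux class, and by injectivity of $F$ (Lemma~\ref{lemma:inj}, using that self-similar flows are minimal) there is only one, i.e.\ $\phi^X_\R$ is uniquely ergodic.

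\textbf{Main obstacle.} The delicate point is bookkeeping the scaling factors correctly: one must check that the $\psi_\ast$-action on flux classes really is conjugate (via $F$) to the linear map $\psi^\ast$ or its inverse \emph{with the right power of $\lambda$}, so that the expanding eigendirection on cohomology matches the ``largest'' invariant measure and the Perron--Frobenius contraction on the Lagrangian $L$ actually kicks in. A secondary subtlety is ensuring the cone $\mathcal{C}_X$ has only finitely many extremal rays (so that a power of $\psi_\ast$ fixes one of them) — this follows from Katok's finite-dimensionality together with the fact that, being the base of a finite-dimensional convex cone carried injectively into a Lagrangian, it is a compact finite-dimensional simplex-like set; alternatively one argues directly with the projective contraction without invoking extremality. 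Minimality of self-similar flows, needed to apply Lemma~\ref{lemma:inj}, should be recalled or cited (it follows from the existence of the dilation $\lambda>1$ together with the Keane-type argument, or from the pseudo-Anosov structure).
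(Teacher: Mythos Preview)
Your proposal is essentially correct and shares the same backbone as the paper's proof---injectivity of the flux map $F$ (Lemma~\ref{lemma:inj}), the action of $\psi$ on flux classes, and simplicity of the eigenvalue $\lambda$---but it is considerably more roundabout. The paper does not invoke Katok's finiteness theorem, Lemma~\ref{lemma:image} on Lagrangians, any Perron--Frobenius contraction, or the permutation of extremal rays. Instead it proves directly, via the Birkhoff ergodic theorem, that for \emph{every} ergodic invariant measure $\mu$ one has $\psi^*(F(\mu)) = \lambda F(\mu)$ (Claim~\ref{claim:UE}): writing $F(\mu) = \lim_{T\to\infty} T^{-1}\gamma^X_T(p)$ and using $\psi_*(\gamma^X_T(p)) = \gamma^X_{\lambda T}(\psi(p))$ gives the eigenvector relation in one line. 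Simplicity of $\lambda$ then forces $F(\mu)\in\R[\im(h)]$ for every ergodic $\mu$, and injectivity finishes. Your ``main obstacle''---the precise scaling bookkeeping---is exactly what this direct Birkhoff computation handles, so once you carry it out your permutation/contraction machinery becomes superfluous.

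Two small points. First, minimality is not merely cited in the paper: it is proved by the short argument that a saddle connection of minimal length would be shrunk by $\psi^{-1}$, a contradiction. Second, your permutation-of-extremal-rays argument does tacitly rely on the scaling identity $F(\psi_*\mu) = \lambda^{-1}\psi_* F(\mu)$ (or an equivalent form) to conclude that a $\psi_*^k$-fixed ergodic measure has flux class in the $\lambda^k$-eigenspace; without that identity the eigenvalue is undetermined. So the computation you flag as delicate is not avoidable---it is the heart of the matter, and the paper isolates it as Claim~\ref{claim:UE}.
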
 
\begin{proof}  We establish first that all  self-similar translation flows are minimal.  Let $(X, Y)$ and $f$ be as in the
above Definition \ref{def:self_similar}. If the translation flow $\phi^X_\R$  is not minimal, then it has a saddle connection $\gamma$, 
as  the surface can be decomposed as the union of finitely many cylindrical component (foliated by periodic orbits) and minimal components. In particular, there exists a saddle connection  of minimal length. However, by the action of $\psi^{-1}$, since $\psi$ and $\psi^{-1}$ stabilize the orbit foliation of $\phi^X_\R$ and $\psi^{-1}$ scales the time variable along the orbits by a factor $\lambda^{-1}$,  we can find a sequence of saddle connections of $\phi^X_\R$ with length converging to zero, which is a contradiction.

\smallskip
Unique ergodicity then follows from the injectivity Lemma~\ref{lemma:inj} and from the following claim:

\begin{claim} \label{claim:UE}  For every ergodic  probability measure $\mu$  for the flow $\phi^X_\R$ on $M$, we have 
$$
\psi^* ( F(\mu) ) =  \lambda  F(\mu) \,.
$$
\end{claim}
\noindent By the above claim, since $\lambda>1$ is a simple eigenvalue and $\psi^*[ \im(h) ] =\lambda [ \im(h) ]$, it follows that, for every ergodic measure $\mu$ of $\phi^X_\R$ on $M$, the flux cohomology class $F (\mu) \in \R [ \im (h) ] \subset H^1(M, \R)$, and, by the injectivity of the map $F: \mathcal C_X \to H^1(M, \R)$, the cone $\mathcal C_X$ of invariant measures for the flow $\phi^X_\R$ is one-dimensional, hence $\phi^X_\R$ is uniquely ergodic.

\begin{proof}[Proof of Claim \ref{claim:UE}] The Claim follows from Birkhoff ergodic theorem. Let $\mu$ be any ergodic probability measure. Let us denote $\gamma^X_T(p)$ the orbit of $\phi^V_\R$  starting at $p$ and ending at $\phi^X_T(p)$, as in the previous section. By the ergodic theorem for $\mu$-almost all $p\in M$ we have (as currents of dimension $1$) almost
$$
F (\mu) = \lim_{T \to +\infty}  \frac{1} {T}   \gamma^X_T(p)  =  \lim_{T \to +\infty}  \frac{1} {T}   \gamma^X_T(\psi(p))    \,,
$$
hence, by taking into account that  $f_\ast (X) = \lambda X$,  we can compute 
$$
\begin{aligned}
\psi_\ast (F(\mu) ) &=  \lim_{T \to +\infty}  \frac{1} {T}   \psi_\ast ( \gamma^X_T(p) ) =  \lim_{T \to +\infty}  \frac{1} {T}  \gamma^X_{\lambda T}(\psi(p) )  \\ &=  \lambda \lim_{T \to +\infty}  \frac{1} {\lambda T}  \gamma^X_{\lambda T} (\psi(p)) =  \lambda F(\mu)  \,.
\end{aligned}
$$
\end{proof}
\noindent The proof of Claim~\ref{claim:UE} completes the proof of Theorem~\ref{thm:UE}.
\end{proof}

The above argument already exhibits the structure of the cohomological argument for unique ergodicity, which derives it
from the simplicity of the top eigenvalue of a linear dynamical system (in this case the action of the pseudo-Anosov map on
cohomology). 

\smallskip
\noindent We now turn to the problem of proving  effective unique ergodicity for self-similar translation flows. The key point
is that, not only $\psi^\ast:H^1(M, \R) \to H^1(M, \R)$ has simple top eigenvalue, but it also has a {\it spectral gap}, that is, the
rest of the spectrum is contained within a disk of radius strictly less than the spectral radius (in the case of homomorphisms of
finite dimensional vector spaces, simplicity implies the existence of a spectral gap).

\begin{theorem} 
\label{thm:eff_UE_selfsim}
There exist $C>0$ and  $\alpha >0$ such that, for all functions $f \in H^1(M)$ (the Sobolev space of square integrable functions with square integrable weak first derivative),  of zero everage, for all $(p,T) \in M \times \R^+$ (such that $p$  has infinite forward orbit) we have
$$
\left \vert \int_0^T  f \circ \phi^X_t (p) dt   \right \vert  \leq  C \Vert  f \Vert_{H^1(M)}  \cdot T^{1-\alpha}\,.
$$
\end{theorem}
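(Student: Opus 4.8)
The plan is to run the cohomological argument behind Theorem~\ref{thm:UE}, strengthening ``$\lambda$ is a simple eigenvalue of $\psi^\ast$'' to ``$\psi^\ast$ has a spectral gap on $H^1(M,\R)$'', and supplementing it with the Hodge-theoretic estimates for the cohomological equation on $(M,h)$ (which hold for every translation surface, cf.\ \cite{F02}, \cite{FM14}). Let $\eta$ be the $1$-form with $\eta(X)\equiv 1$ and $\eta(Y)\equiv 0$ (a constant multiple of $\re(h)$); then, in the notation of Lemma~\ref{lemma:image},
$$
\int_0^T f\circ\phi^X_t(p)\,dt \;=\; \langle \gamma^X_T(p),\, f\,\eta\rangle \;=\; \langle \overline{\gamma}^X_T(p),\, f\,\eta\rangle \;-\; \int_{I(\phi^X_T(p),\,p)} f\,\eta\,,
$$
and the last term is $O(\Vert f\Vert_{H^1(M)})$ uniformly in $(p,T)$, since the closing arc has uniformly bounded length and the trace of an $H^1(M)$ function onto such an arc is controlled by the $H^1$ norm. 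So it suffices to bound $\langle\overline{\gamma}^X_T(p), f\,\eta\rangle$, and we may assume $T\ge 1$ (the range $T\le 1$ being routine, possibly after decreasing $\alpha$). Since $\psi_\ast X=\lambda X$, one has $\psi\circ\phi^X_t=\phi^X_{\lambda t}\circ\psi$, whence the renormalization identity $\psi_\ast\big(\overline{\gamma}^X_T(p)\big)=\overline{\gamma}^X_{\lambda T}(\psi(p))+\Delta_T(p)$, with $\Delta_T(p)$ a $1$-cycle carried by arcs of uniformly bounded length; iterating $n$ times with $T=\lambda^n\tau$ and $\tau\in[1,\lambda)$ writes $\overline{\gamma}^X_T(p)$ as $(\psi_\ast)^n$ of a bounded-length current plus $\sum_{k=0}^{n-1}(\psi_\ast)^k$ of bounded-length cycles.

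The first substantive step is to subtract the ``solvable part'' of $f$. Let $m$ be the (by Theorem~\ref{thm:UE} unique) $\phi^X_\R$-invariant probability measure, $D_0$ the mean $D_0(\varphi)=\int_M\varphi\,dm$, and let $D_1,\dots,D_r$ be a basis of a complement of $\R D_0$ in the space of $X$-invariant distributions of Sobolev order $\le 1$; this space is finite-dimensional, a fact that refines Katok's bound (Theorem~\ref{thm:Katok_bound}). Fix \emph{smooth} functions $f_1,\dots,f_r$ with $D_i(f_j)=\delta_{ij}$ and $D_0(f_j)=0$. Any $f\in H^1(M)$ with $D_0(f)=0$ splits as $f=\sum_{j=1}^r D_j(f)\,f_j+f^\sharp$, with $\vert D_j(f)\vert\le C\Vert f\Vert_{H^1}$ and $f^\sharp$ annihilated by $D_0,\dots,D_r$. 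By the Hodge estimate the equation $Xu=f^\sharp$ has a solution $u$ in a Sobolev space embedding into $C^0(M)$, with $\Vert u\Vert_{C^0}\le C\Vert f\Vert_{H^1}$; since $\langle\gamma^X_T(p),(Xu)\,\eta\rangle=\int_0^T(Xu)\circ\phi^X_t(p)\,dt=u(\phi^X_T(p))-u(p)$, the contribution of $f^\sharp$ is $O(\Vert f\Vert_{H^1})$. It remains to bound $\langle\overline{\gamma}^X_T(p),f_j\,\eta\rangle$ for the finitely many fixed, smooth $f_j$.

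For these I would combine the iterated renormalization identity with the spectral gap. By unique ergodicity the normalized orbit currents $\overline{\gamma}^X_T(p)/T$ converge to the flux current $\imath_X m$, which spans the $\lambda$-eigenline of $\psi_\ast$ (this is the quantitative content of Claim~\ref{claim:UE}); its pairing with $f_j\,\eta$ equals $\int_M f_j\,dm=D_0(f_j)=0$, so the part of $\langle\overline{\gamma}^X_T(p),f_j\,\eta\rangle$ growing linearly in $T$ vanishes. The deviation $\overline{\gamma}^X_T(p)-c(p,T)\,T\,\imath_X m$ is, modulo currents pairing trivially with $f_j\,\eta$, governed by the action of $\psi_\ast$ on the $\psi_\ast$-invariant complement of the $\lambda$-eigenline; there, by the spectral gap (the spectrum of $\psi_\ast$ on $H^1(M,\R)$ coincides with that of $\psi^\ast$, by symplectic duality), the iterates $(\psi_\ast)^k$ have norm $O(\lambda_2^{\,k})$ up to a polynomial factor, where $\lambda_2<\lambda$ is the second spectral radius. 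The $n$-fold renormalization with $T=\lambda^n\tau$ then turns the error terms $\sum_{k=0}^{n}(\psi_\ast)^k(\cdots)$ into a convergent geometric sum of size $O(\lambda_2^{\,n})=O(T^{\,1-\alpha})$ for any $\alpha>0$ with $\lambda_2<\lambda^{\,1-\alpha}$. Hence $\vert\langle\overline{\gamma}^X_T(p),f_j\,\eta\rangle\vert\le CT^{1-\alpha}$; summing over $j$ with weights $D_j(f)=O(\Vert f\Vert_{H^1})$, together with the $O(\Vert f\Vert_{H^1})$ contributions of $f^\sharp$ and of the closing arc (absorbed into $T^{1-\alpha}$ since $T\ge1$), yields the stated inequality.

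I expect the main obstacle to be the Hodge-theoretic input used in the second step: the solvability of $Xu=f$ with a gain of (slightly more than) one derivative once $f$ is orthogonal to the finitely many $X$-invariant distributions, together with the uniform estimate $\Vert u\Vert_{C^0}\le C\Vert f\Vert_{H^1}$. This rests on the analysis of the Cauchy--Riemann operator attached to the complex structure of $(M,h)$ and on controlling the relevant anisotropic Sobolev norms near the conical singularities, and it is precisely where Hodge theory, rather than the soft arguments behind Katok's finiteness theorem, becomes indispensable. A secondary, technical, difficulty is the bookkeeping relating the $H^1$ norm on functions to the dual norm on $1$-currents through the iterated renormalization, so that the geometric series in $\lambda_2$ has constants uniform in $(p,T)$.
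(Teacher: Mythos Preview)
Your approach differs substantially from the paper's and carries a genuine gap. You split $f$ via invariant distributions and attempt to solve $Xu=f^\sharp$ with $\Vert u\Vert_{C^0}\le C\Vert f\Vert_{H^1}$. This regularity claim fails: the cohomological equation for translation flows loses strictly more than one derivative (in \cite{F02} the loss is about three; the best known refinements still lose $1+\epsilon$), so from $f\in H^1$ you cannot extract $u\in C^0$. You correctly flag this as the ``main obstacle'', but it is not resolvable at the stated regularity, and without it the $f^\sharp$ contribution is uncontrolled.

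The paper avoids the cohomological equation entirely. It bounds $\Vert\gamma^X_T(p)+T\,\im(h)\Vert_{W^{-1}(M,h)}$ directly: write $\gamma^X_T(p)=\psi_\ast^{n_T}\bigl(\gamma^X_\tau(\psi^{-n_T}p)\bigr)$, decompose at each step into a closed current $z_k\in\mathcal Z^{-1}(M,h)\cap K^{-1}(M,h)$ plus a uniformly bounded remainder $r_k$ (the closing arc), and iterate $z_{k+1}=\psi_\ast z_k+r_k'$. The key is Claim~\ref{claim:spectrum_closed}: the spectrum of $\psi_\ast$ on $\mathcal Z^{-1}(M,h)$ outside the unit circle equals that of $\psi^\ast$ on $H^1(M,\R)$, because $\psi_\ast$ restricted to \emph{exact} currents is isometric for the equivalent norm $\Vert dU\Vert:=\Vert U\Vert_{L^2}$ (since $\psi$ preserves area). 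Hence the growth of $z_k$ is governed by the finite-dimensional cohomological spectral radius $\rho_\psi<\lambda$, yielding the exponent $1-\alpha=\log\rho_\psi/\log\lambda$.

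Your handling of the $f_j$ implicitly needs the same device. The deviation currents live in $W^{-1}(M,h)$, and $\langle\,\cdot\,,f_j\eta\rangle$ is not a cohomological pairing since $f_j\eta$ is not closed (unless $Yf_j=0$). Saying the deviation is ``governed by $\psi_\ast$ on the complement of the $\lambda$-eigenline'' in $H^1(M,\R)$ presupposes that the exact part of the closed current contributes no growth; this is exactly the content of Claim~\ref{claim:spectrum_closed}. Once you have that isometry on exact currents, the entire detour through invariant distributions and the cohomological equation becomes unnecessary: one bounds the $W^{-1}$ norm of the current itself, and the pairing with $f\,\re(h)$ for any $f\in H^1(M)$ follows immediately.
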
 
\begin{proof}
{\it The first step} in the argument is to write ergodic integrals in terms of the one-dimensional currents given by orbit segments:
since by definition $\imath_X (\re (h))=1$,  we have, for all  $(p, \R^+)$ and for all $f \in H^1(M)$,
\begin{equation}
\label{eq:erg_int}
 \int_0^T  f \circ \phi^X_t (p) dt  =  \<\gamma^X_T(p),  f \re(h)  \>
\end{equation}
It is therefore enough to prove bounds on the currents $\gamma^X_T(p)$ with respect to the dual Sobolev norm on the
space $W^{-1} (M,h)$ of currents (defined as the dual Banach space of the space $W^1(M,h)$ of $1$-form with coefficients
with respect to $\{\re (h), \im (h)\}$  in the Sobolev space $H^1(M)$. 
We note that, by the {\it Sobolev trace theorem}, the integration currents along smooth $1$-dimensional submanifolds of a surface belong to the dual Sobolev space $W^{-1} (M,h)$. 

\smallskip
\noindent {\it The second step} is to generate orbit segments, and the associate currents, for large $T>0$ by iterated applications
of the pseudo-Anosov map $\psi: M \to M$ on bounded orbit segments. In fact, we have  the following identity. 

\smallskip 
\noindent For any $T>1$, let $n_T = [\log T/ \log \lambda ]$ and let $\tau = T/ \lambda^{n_T} \in [1, \lambda)$. We then have  
\begin{equation}
\label{eq:psi_action}
\gamma^X_T (p)  =    \psi^{n_T}_*   (  \gamma^X_{\tau} ( \psi^{- n_T}(p)   ) )  \,.
\end{equation}
Bounds (and even asymptotics) for the currents $\gamma^X_T(p)$ as a function for $T>1$ can then in principle be derived
from knowledge of the spectrum of the operator $\psi_* : W^{-1} (M,h) \to W^{-1}(M,h)$.  However, this approach does not work since the space $W^{-1} (M,h)$ is too large and bounds on its spectral radius are too weak to derive interesting bounds on ergodic
integrals. 

\smallskip
\noindent {\it The third step}  is based on the remark that although the currents $\gamma^X_T(p)$ are never closed (for minimal
translation flows), they are always at uniformly bounded distance from a closed $1$-current, given by the loop
$$
\overline{\gamma}^X_T (p) = \gamma^X_T(p) \cup I (\phi^X_T(p), p) \,.
$$
In fact,  the integration currents along rectifiable loops are closed and there exists a constant such that
$$
\Vert {\gamma}^X_T (p)  -   \overline{\gamma}^X_T (p)  \Vert_{W^{-1} (M,h)} = \Vert I (\phi^X_T(p), p)    \Vert_{W^{-1} (M,h)} 
 \leq C \text{ \rm diam}(M,h) \,.
$$
 Let $K^{-1}(M,h) \subset W^{-1} (M,h)$ denote the closed subspace defined as
$$
K^{-1}(M,h) = \{ \gamma \in W^{-1} (M,h)  \vert \<\gamma,  \im (h)\>=0 \}\,.
$$
We note that, since $\psi^\ast ( \im (h)) =\lambda  \im (h)$, the subspace $K^{-1}(M,h)$ is 
$\psi_\ast$-invariant. 

\smallskip
\noindent Let then $\mathcal Z^{-1}(M,h) \subset W^{-1} (M,h)$ denote the subspace of closed $1$-currents. 
 For all $k \in \N$ we write 
\begin{equation}
\label{eq:dec}
\psi^k _*  \big(  \gamma^X_{\tau} ( f^{- n_T}(p) \big) +   (\lambda^k \tau)  \im (h)  =   z_k  +  r_k \,, 
\end{equation}
with $z_k \in \mathcal Z^{-1}(M,h) \cap K^{-1}(M,h)$   and $r_k \in  \mathcal Z^{-1} (M,h)^\perp \cap K^{-1}(M,h)$ such that
\begin{equation}
\label{eq:rem_bound1}
\Vert  r_k  \Vert_{W^{-1} (M,h)} \leq C \text{ \rm diam}(M,h)\,.
\end{equation}
By definition, for all $k\in \N$ we have
$$
 z_{k+1}  +  r_{k+1}  = \psi_*( z_k + r_k) =   \psi_*( z_k) +  \psi_*(r_k) \,,
$$
and by orthogonal projection there exists $r'_k\in  \mathcal Z^{-1}(M,h) \cap K^{-1}(M,h) $ such that
\begin{equation}
\label{eq:rec}
z_{k+1}  = \psi_*( z_k) + r'_k\,.
\end{equation}
Since the current $ r'_k \in \mathcal Z^{-1}(M,h)^\perp \cap K^{-1}(M,h)$ is the orthogonal projection of the current $ \psi_*(r_k)$,
by the bound in formula \eqref{eq:rem_bound1}  there exists a constant $C'>0$ such that
\begin{equation}
\label{eq:rem_bound2}
\Vert  r'_k \Vert_{W^{-1} (M,h)} \leq    \Vert \psi_*(r_k) \Vert_{W^{-1} (M,h)}  \leq   C'   \text{ \rm diam}(M,h)  \,.
\end{equation}
Finally let $\rho_\psi \in  [1, \lambda)$ denote the spectral radius of $\psi_\ast$ on $\mathcal Z^{-1} (M,h)$. By the recursive formula \eqref{eq:rec},
we have, for all $k\in \N$, 
$$
\Vert z_{k+1} \Vert_{W^{-1} (M,h)}    \leq    \rho_\psi  \Vert  z_k \Vert_{W^{-1} (M,h)}  + C'  \text{ \rm diam}(M,h) \,.
$$
which by comparison leads to the estimate
$$
\Vert z_k \Vert_{W^{-1} (M,h)} \leq   \rho_\psi^k \left( \Vert z_0 \Vert_{W^{-1} (M,h)}  + C' \text{ \rm diam}(M,h) \sum_{j=0}^{k-1} 
\rho_\psi^{-(j+1)} \right) \,.
$$
For $k = n_T$, by formulas \eqref{eq:psi_action} and \eqref{eq:dec}, we finally conclude that there exists a constant $C''>0$
such that, for all $(p, T)\in M\times \R^+$\,
$$
\begin{aligned}
&\Vert \gamma^X_T(p) +T   \im (h) \Vert_{W^{-1} (M,h)} = \Vert z_{n_T} + r_{n_T} \Vert_{W^{-1} (M,h)}  \\ 
& \qquad\leq  \rho_\psi^{n_T} \left( \Vert z_0 \Vert_{W^{-1} (M,h)}  + C' \text{ \rm diam}(M,h) n_T  \right) \\
 &\qquad\leq   C''  T^{\frac{\log \rho_\psi}{\log \lambda}}  \left(\lambda  + \text{ \rm diam}(M,h) \frac{\log T}{\log \lambda}  \right)
\end{aligned} 
$$
which completes the argument (since $1- \log \rho_\psi/ \log \lambda >0$)  up to the {\it fourth and last step}: the proof of the following claim on the spectrum of $\psi_*$ on the space $\mathcal Z^{-1} (M,h)$ of closed $1$-currents.

\begin{claim} 
\label{claim:spectrum_closed}
The spectrum of $\psi_*$ on the space $\mathcal Z^{-1} (M,h)$ of closed $1$-currents outside of the unit circle
equals the spectrum of the finite dimensional homomorphism $\psi^\ast$ on $H^1(M, \R)$.  In fact, the restriction of $\psi_*$ 
to the space $\mathcal E^{-1} (M,h)$ of exact $1$-currents is isometric, hence its spectrum is contained in the unit circle.
\end{claim}
\begin{proof}[Proof of Claim \ref{claim:spectrum_closed}]
By the de Rham theorem (for currents), there is an isomorphism
$$
H^1(M,\R)  \approx   \mathcal Z^{-1} (M,h) /   \mathcal E^{-1} (M,h) \,.
$$
Since $\mathcal E^{-1} (M,h)$ is a closed $\psi_*$-invariant space, it is enough to prove that $\psi_* \vert \mathcal E^{-1} (M,h)$
is isometric with respect to an equivalent norm. By definition for any exact current $\gamma$ of dimension $1$ there exists
a unique current $U_\gamma$ of dimension $2$ such that  $\<U_\gamma, \text{ \rm area}_h \>=0$ and 
$$
\gamma = d U_\gamma \,.
$$
Since the exterior derivative complex is elliptic with a gain of one derivative, the current $U_\gamma$ is given by integration
against a square integrable function. 

\smallskip
\noindent By the de Rham theorem  the exterior derivative operator $d:\Omega^1(M) \to \Omega^2(M)$ is onto the closed subspace $\Omega_0^2(M)$ of $2$-forms with vanishing total integral. For every smooth 
$1$-form $\alpha$  we can define 
$$
\<U_\gamma, d \alpha \>:= \<\gamma, \alpha\>\,,
$$
and the definition is well-posed since whenever $d\alpha = d\beta$, then $d(\alpha-\beta)=0$ and since $\gamma$ is exact,
by definition
$$
\<\gamma, \alpha-\beta \> =0\,.
$$
Thus $U_\gamma$ is well-defined on the subspace $\Omega_0^2(M)$ and can be extended to a bounded linear functional
on $\Omega^2(M)$  by the condition $\<U_\gamma, \text{ \rm area}_h \> =0$.  

\noindent In addition, by the open mapping theorem, there exists a constant $C>0$ such that, for any smooth function $f$ of zero average on $M$, there exists a smooth $1$ form $\alpha_f$ such that $d \alpha_f = f  \text{ \rm area}_h$,  such that
$$
 \Vert \alpha_f \Vert_{W^1(M,h)}  \leq   C \Vert  f  \Vert_{L^2(M,h)} \,.
$$
It follows that 
$$
\vert \<U_\gamma, f \> \vert  = \vert  \<\gamma, \alpha_f \> \vert \leq  \Vert \gamma \Vert_{W^{-1} (M,h)} \Vert \alpha_f \Vert_{W^{-1} (M,h)}  \leq   C \Vert \gamma \Vert_{W^{-1} (M,h)}   \Vert  f  \Vert_{L^2(M,h)}\,,
$$
which implies that $U_\gamma \in L^2(M,h)$ and that
$$
\Vert  U_\gamma \Vert_{L^2(M,h)} \leq   C \Vert \gamma \Vert_{W^{-1} (M,h)} \,.
$$
Conversely, for all smooth $1$-form $\alpha \in \Omega^1(M)$, we have
$$
\vert  \<\gamma, \alpha \> \vert  = \vert  \<U_\gamma, d\alpha \> \vert  \leq \Vert U_\gamma \Vert_{L^2(M,h)}
 \Vert \alpha \Vert_{W^1(M,h)}\,,
$$
which implies the converse estimate
$$
 \Vert \gamma \Vert_{W^{-1} (M,h)} \leq \Vert  U_\gamma \Vert_{L^2(M,h)}\,.
$$
We have thus proved that the norm 
$$
\Vert\!\vert  \gamma  \vert \!  \Vert\ :=  \Vert  U_\gamma \Vert_{L^2(M,h)} \,, \quad \text{ for all } \gamma \in \mathcal E^{-1} (M,h)\,,
$$
is an equivalent norm on the Banach space $\mathcal E^{-1} (M,h)$ (endowed with the norm induced from $W^{-1} (M,h)$).

\noindent Finally, since the action of a diffeomorphism commutes  with the exterior derivative and the pseudo-Anosov map
is area-preserving on $(M,h)$, hence unitary on  $L^2(M,h)$, we have
$$
\Vert\!\vert  \psi_*(\gamma)  \vert \!  \Vert =  \Vert  U_{\psi_*(\gamma)} \Vert_{L^2(M,h)} =   
 \Vert  \psi^*(U_\gamma) \Vert_{L^2(M,h)}        =          \Vert\!\vert  \gamma  \vert \!  \Vert \,,
$$
which completes the proof that $ \psi_*$ is isometric on $\mathcal E^{-1} (M,h)$ with respect to an equivalent norm.
\end{proof}

\noindent The proof of Claim \ref{claim:spectrum_closed}  completes the proof of Theorem~\ref{thm:eff_UE_selfsim}.
\end{proof}

\subsection{The Kontsevich--Zorich cocycle}

The typical translation flow is not self-similar, and in fact the results of the previous apply only to a countable set of
translations flows (those which are self-similar). To extend the results to typical translation flow, it is necessary to 
introduce a renormalization cocycle over the Teichm\"uller flow (or a related dynamical system such as the Rauzy--Veech 
induction and the Veech ``zippered rectangles '' flow \cite{Ve82}) on a cohomology vector bundle  over the moduli space 
of translation surfaces, which generalizes the action of a pseudo-Anosov map on the cohomology vector space. 

\begin{definition} 
\label{def:Hodge_bundles} 
\cite{Ko97}, \cite{KZ97} The {\bf complex Hodge bundle}  $H^1_\kappa (M, \C)$ over a stratum $\mathcal H^{(1)}_\kappa$ of the moduli space of translation surfaces is the quotient (orbifold) vector bundle $H^1_\kappa (M, \R)$  of the product bundle $\hat{\mathcal H}^{(1)}_\kappa  \times   H^1(M,\R)$ under the product action of the mapping class group $\Gamma_g$  by pull-back:
$$
 H^1_\kappa (M, \C) :=   \left( \hat{\mathcal H}^{(1)}_\kappa  \times   H^1(M,\C) \right) /  \Gamma_g 
$$ 
The {\bf real  Hodge bundle}  $H^1_\kappa (M, \R)$ is the real part of the complex
Hodge bundle $H^1_\kappa (M, \C)$, that is, the quotient of the trivial bundle $\hat{\mathcal H}^{(1)}_\kappa  \times   H^1(M,\R)$. 

\smallskip
\noindent The {\bf Kontsevich--Zorich cocycle} $g^{\rm KZ}_\R$ on $H^1_\kappa (M, \C)$   is the projection to the complex Hodge bundle under the action of the mapping class group $\Gamma_g$  of the product cocycle (over the Teichm\"uller geodesic flow $g_\R$)
$$
g_\R \times  \text{\rm Id} :   \hat{\mathcal H}^{(1)}_\kappa  \times   H^1(M,\C)  \to   \hat{\mathcal H}^{(1)}_\kappa  \times   H^1(M,\C) \,.
$$
The Kontsevich--Zorich cocycle on  the complex Hodge bundle $H^1_\kappa (M, \C)$  has a well-defined restriction to  the real Hodge bundle $H^1_\kappa (M, \R)$. 
\end{definition} 
\begin{remark}  The trivial bundle $\hat{\mathcal H}^{(1)}_\kappa  \times   H^1(M,\C)$ is well-defined over the Teichm\"uller space 
of Riemann surfaces, despite the fact that the Teichm\"uller space is an equivalence class of surfaces with respect to the action of the group $\text{\rm Diff}^+_0(M)$ of diffeomorphisms isotopic to the identity, since the action of $\text{\rm Diff}^+_0(M)$ is trivial. 

\noindent The bundle $H^1_\kappa (M, \C)$ is therefore the pull-back of a bundle well-defined over the moduli space of Riemann surfaces (forgetting the translation surface structure). The trivial connection on $\hat{\mathcal H}^{(1)}_\kappa  \times   H^1(M,\C)$ projects to a connection on $H^1_\kappa (M, \R)$ called the {\it Gauss-Manin connection}. 

\noindent  The Kontsevich--Zorich cocycle is given by
the parallel transport of cohomology classes along orbits of the Teichm\"uller flow with respect to the Gauss-Manin connection.
\end{remark} 

\begin{exercise} Let $(M, h) \in \mathcal H_\kappa^{(1)}$ such that there exists a pseudo-Anosov map  $\psi: (M, h) \to (M,h)$ (as in Definition \ref{def:self_similar}) of dilation factor $\lambda >1$.  Derive from from the definition of the Teichm\"uller flow $g_\R$ (see formulas \eqref{eq:GL_action} and \eqref{eq:subgroups} in section\ref{subsec:Mod_spaces}) that $(M,h)$ is a periodic point of $g_\R$ on $\mathcal H_\kappa^{(1)}$ of period 
$T=\log \lambda$.   Prove that all periodic orbits of the Teichm\"uller flow on  $\mathcal H_\kappa^{(1)}$ are of this type. Prove that
the time $T$-map of the Kontsevich--Zorich cocycle is given by the formula
$$
g^{\text {\rm KZ}}_T ( (M,h), c) =   ( (M, h), (\psi^{-1})^* (c) ) \,, \quad \text{ for all }   c \in H^1(M, \R)\,. 
$$
\end{exercise} 
\noindent The Kontsevich--Zorich cocycle generalizes the action of a pseudo-Anosov map on cohomology, and provides a framework to extend the results of section \ref{subsec:selfsim} to typical translation flows. In terms of the action  of elements of the mapping class group on the cohomology, the returns of an orbit of the Teichm\"uller geodesic flow to a fundamental domain of a stratum of the moduli space of translation surfaces encode a sequence of (pseudo-Anosov) maps whose compositions replace the iteration of a single map, as in the self-similar case. 
A natural class of translation flows for which it is possible to prove (effective) unique ergodicity by these methods is therefore that 
of horizontal flows of translation surfaces which are recurrent under the Teichm\"uller flow (with strictly positive frequency). 

\smallskip
\noindent  As the Kontsevich--Zorich cocycle generalizes the action of a single diffeomorphism on the cohomology vector
space, the notion of a {\it Lyapunov spectrum} generalizes the notion of the spectrum of a homomorphism (strictly speaking
the Lyapunov exponents generalize the logarithms of the modulus of the eigenvalues).  Since  the symplectic structure given 
by the cup product on the cohomology $H^1(M, \R)$ is invariant under the action of diffeomorphisms, the Kontsevich-Zorich 
is a {\it symplectic cocycle}, hence its Lyapunov spectrum, with respect to any probability $g_\R$-invariant measure $\mu$ on
a stratum $\mathcal H_\kappa^{(1)}$, is symmetric:
$$
\lambda^\mu_1 \geq \lambda^\mu_2 \geq \dots \geq \lambda_g^\mu \geq - \lambda_g^\mu \geq \dots \geq  \lambda^\mu_2 \geq -  \lambda^\mu_1\,.
$$
The {\it Kontsevich--Zorich conjectures} \cite{Ko97}, \cite{KZ97} stated that for $\mu=\mu^{(1)}_\kappa$ the Masur-Veech
measure on a stratum of the moduli space 
\begin{equation}
\label{eq:KZ_exp}
\lambda^\mu_1=1 > \lambda^\mu_2 > \dots > \lambda_g^\mu >0\,,
\end{equation}
and that these exponents are also the deviation exponents of ergodic averages of {\it Masur--Veech  typical }translation flows 
(as well as Masur--Veech  typical locally Hamiltonian flows with non-degenerate saddle singularities) in the sense that, there exists linear functionals $D_2, \dots, D_g$ on $C^\infty(M)$ (invariant distributions) such that, for all functions $f \in C^\infty(M)$ and for all $\epsilon >0$, 
\begin{equation} 
\label{eq:KZ_asymptotics}
\int_0^T f (\phi^X_t(x) ) dt  = T \int_M f \text{\rm area}_h  +   \sum_{i=2}^g D_i(f)  T^{\lambda_i + o(1)} +  O (T^\epsilon)\,.
\end{equation}
The identity $\lambda^\mu_1=1$ is elementary (as it be explained below), the {\it spectral gap} property 
$\lambda_2^\mu < 1$ can be derived from Veech proof of non-uniform hyperbolicity of the Masur--Veech
measures, and will be proved below (following \cite{F02}) for all probability ergodic measures.  In fact, it follows
from the existence of period coordinates that the {\it non-negative} half of symmetric  Lyapunov spectrum of the tangent cocycle 
$Tg_\R$ of the Teichm\"uller flow are
$$
2 \geq 1+ \lambda^\mu_2 \geq \dots \geq 1+\lambda^\mu_g \geq 1 = \dots =1 \geq 1-\lambda^\mu_g \geq \dots  \geq 1-\lambda^\mu_2 \geq  0   
$$
(with the exponent $1$ appearing with multiplicity at least equal to the number of cone points of surfaces in the stratum),
hence $\lambda^\mu_2<1$ if and only if all the non-trivial Lyapunov exponents of the tangent cocycle $Tg_\R$ are non-zero.

\smallskip
\noindent The non-vanishing
of Kontsevich--Zorich exponents, that is, the strict inequality $\lambda^\mu_g>0$ (for $\mu= \mu^{(1)}_\kappa$),
as well as the asymptotic of ergodic averages \eqref{eq:KZ_asymptotics}  for translations flows, were proved in \cite{F02} 
(see also  \cite{F11}). 

\noindent The {\it simplicity} the the  Kontsevich--Zorich spectrum, that is, all the strict inequalities 
in formula \eqref{eq:KZ_exp}, was proved by  A.~Avila and M.~Viana \cite{AV07}.  

\noindent The Kontsevich--Zorich conjecture 
on the deviation of ergodic averages for locally Hamiltonian flows has been completely proved only recently \cite{FU}, and
generalized to flows with degenerate (canonical) saddle like singularities in \cite{FK}. 

\smallskip
\noindent By the Oseledets theorem, for  almost all translation surfaces $(M,h)$ (with respect to any $g_\R$-ergodic measure $\mu$
on $\mathcal H_\kappa^{(1)}$) there exists a splitting of the cohomology 
$$
H^1(M, \R) = E(\lambda_1) \oplus  E(\lambda_2)  \dots \oplus  E(\lambda_s) \oplus E(-\lambda_s) \oplus \dots \oplus E(-\lambda_2) \oplus E(\lambda_1)
$$
into subspaces corresponding to the distinct non-negative Lyapunov exponents  
$$
\lambda^\mu_1 >\lambda^\mu_2 >\dots > \lambda^\mu_s > -\lambda^\mu_s > \dots > -\lambda^\mu_1\,,
$$ 
 such that for all $i\in \{1, \dots, s\}$ and for all $c \in  E(\pm \lambda_i)$,
$$
\lim_{t\to \pm\infty}  \frac{1}{t}  \log \Vert  g^{\text{\rm KZ}}_t (c)   \Vert  = \pm \lambda_i\,.
$$
In particular, if $\lambda_1=1$ is simple and $\lambda_2<\lambda_1$, then for  $c \in H^1(M, \R)$ such that
$ c \wedge E_1(\lambda_1) =0$ we have
$$
\lim_{t\to \pm\infty}  \frac{1}{t}  \log \Vert  g^{\text{\rm KZ}}_t (c)   \Vert  < \lambda^\mu_1=1\,.
$$
In the following we will prove a similar, but stronger upper bound for all  translation surfaces $(M,h)$ with the property  
that the Teichm\"uller geodesic ray $g_\R(M,h)$ returns with positive frequency to any given compact subset of the
moduli space (of Riemann surfaces). 

\subsection{The Hodge norm and its first variation} 
\label{subsec:Hodge_norm}

A powerful tool to analyze the Kontsevich--Zorich cocyle (hence the tangent cocycle of the geodesic flow) is given by
the Hodge norm on the cohomology bundle (see  for instance \cite{Ko97}, \cite{KZ97}, \cite{F02}, \cite{ABEM12} , 
\cite{EM18}, \cite{McM20}, \cite{Fr22}, \cite{KW22}). 

\begin{definition}   Let $M$ be a Riemann surface. The {\bf Hodge norm on the complex cohomology} $H^1(M, \C)$
is defined as follows. For every $c \in H^1(M, \C)$ and for every complex closed $1$- form  $\alpha$ on $M$ such
such that $c= [\alpha]$ we set
$$
\Vert c  \Vert_{H^1(M, \C)}  := \Vert \alpha \Vert_M = \left( \frac{ \imath}{2}  \int_M  \alpha \wedge \overline {\alpha}\right)^{1/2} \,. 
$$
The {\bf Hodge norm on the real  cohomology} $H^1(M, \R)$ can be defined  via the identification (Hodge representation
theorem for Riemann surfaces) of the real cohomology 
$H^1(M, \R)$ with the subspace of holomorphic $1$-forms $H^{1,0}(M) \subset H^1(M, \C)$. 

\noindent In other terms, for every 
$c \in H^1(M, \R)$  and for every holomorphic $1$- form $\alpha$ on $M$ such such that $ c= \re[\alpha]$ we set
$$
\Vert c  \Vert_{H^1(M, \R)}  := \Vert \alpha \Vert_M = \left( \frac{ \imath}{2}  \int_M  \alpha \wedge \overline {\alpha} \right)^{1/2} \,.
$$
\end{definition} 

\begin{exercise} Prove that the Hodge norm is well-defined, that is, the definition is independent of the $1$-form
representing  the (de Rham) cohomology class, and that it gives an hermitian  norm on $H^1(M, \C)$ and an euclidean
norm on $H^1(M, \R)$.
\end{exercise}

\begin{exercise} Prove that the Hodge norm is equivariant under the action of group $\text{ \rm Diff}^+(S)$  of the
underlying smooth surface $S$, in the sense that,  for evert $\phi  \in \text{ \rm Diff}^+(S)$ and for every $c\in H^1(M, \C)$,
we have 
$$
\Vert  \phi ^\ast (c)     \Vert_{H^1( \phi_\ast(M), \C)}  =  \Vert  c     \Vert_{H^1(M, \C)}  \,.
$$
The Riemann surface $\phi_\ast(M)$  is defined by precomposition of the charts of an atlas of the Riemann surface $M$
with the diffeomorphism $\phi$ on $S$. 

\noindent Prove that, since it is equivariant, the Hodge norm induces a  norm on the  Hodge bundle $H^1_\kappa (M, \C)$,
and on $H^1_\kappa (M, \R)$.
\end{exercise} 

\noindent {\bf Notation}: In the following we will adopt the following notation for  the Hodge norm of a cohomology class $c \in H^1(M, \R)$ at a translation surface  $(M, h) \in \mathcal H^{(1)}_\kappa$:
$$
\Vert  (c, M, h) \Vert  =  \Vert c    \Vert_{H^1(M, \R)} \,.
$$
The first variation of the Hodge norm along a Teichm\"uller geodesic is given by the following formula  (see \cite{F02}, Lemma 2.1' ,
 \cite{FM14}, Theorem 29, or \cite{FMZ12}, Lemma 2.5):
\begin{lemma} \label{lemma:first_var} 
Let $(M, h) \in \mathcal H^{(1)}_\kappa$. For any $c \in H^1(M, \R)$,  let $\alpha  \in H^{1,0}(M)$
denote the unique holomorphic differential such that $c=\re(\alpha)$. We have 
$$
\frac{d}{dt} \Vert g^{ \text{\rm KZ}} _t (M,h, c)\Vert^2  \Big\vert _{t=0} =   2 \re\, B_{(M,h)}(c) :=  2 \re\left( \int_M  \left(\frac{\alpha}{h} \right)^2 \text{\rm area}_h \right)\,.
$$
\end{lemma}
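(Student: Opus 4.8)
The plan is to reduce the statement to the first variation of the $L^2$-norm of the harmonic representative of the fixed cohomology class $c$ with respect to the moving conformal structure, and then to perform the computation in a flat coordinate of $h$, where the Teichm\"uller deformation is $\R$-linear.

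First I would recall that, by the Hodge representation theorem, $\Vert g^{\rm KZ}_t(M,h,c)\Vert^2$ equals the $L^2$-norm of the harmonic representative $\omega_t$ of $c$ on the Riemann surface $M_t$ underlying $g_t\cdot(M,h)$, computed with respect to any metric in the conformal class of $M_t$ --- the $2$-form $\eta\wedge *\eta$ attached to a $1$-form $\eta$ on a surface depends only on the conformal structure, not on the choice of metric. Now $\omega_t$ minimizes $\eta\mapsto\int_{M_t}\eta\wedge *_t\eta$ among closed $1$-forms with $[\eta]=c$, and at $t=0$ the form $\omega_0$ is already the minimizer: the directional derivative of $f\mapsto\int_{M_0}(\omega_0+df)\wedge *_0(\omega_0+df)$ at $f=0$, in a direction $g$, equals $2\int_{M_0}dg\wedge *_0\omega_0=-2\int_{M_0}g\,d(*_0\omega_0)=0$, since the harmonic form $\omega_0$ is co-closed. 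Hence the variation of the minimizer contributes nothing to first order and
$$\frac{d}{dt}\Big|_{t=0}\Vert g^{\rm KZ}_t(M,h,c)\Vert^2 \;=\; \int_M\omega_0\wedge\dot *(\omega_0)\,,\qquad \dot * := \frac{d}{dt}\Big|_{t=0}*_t\,.$$

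Next I would compute the pointwise operator $\dot *$. In a flat coordinate $z$ for $h$ (so $h=dz$), the action of $g_t$ on $M$ is the $\R$-linear map $z\mapsto\cosh(t)\,z-\sinh(t)\,\bar z$, which one reads off from the formula $(\re h_{g_t},\im h_{g_t})=\operatorname{diag}(e^{-t},e^{t})\,(\re h,\im h)$ for the $\GL(2,\R)$-action. Thus $h_t:=\cosh(t)\,h-\sinh(t)\,\bar h$ is a holomorphic differential on $M_t$; expressing $h=\cosh(t)\,h_t+\sinh(t)\,\bar h_t$ and using that $*_t$ acts as $-i$ on $(1,0)$-forms of $M_t$ and as $+i$ on $(0,1)$-forms, one gets $*_t(h)=-i\,[\cosh(2t)\,h-\sinh(2t)\,\bar h]$, hence
$$\dot *(h)=2i\,\bar h\,,\qquad \dot *(\bar h)=\overline{\dot *(h)}=-2i\,h\,.$$
Finally, writing the harmonic representative as $\omega_0=\re(\alpha)=\frac{1}{2}(\phi\,h+\bar\phi\,\bar h)$ with $\phi:=\alpha/h\in L^2(M,{\rm area}_h)$ (the zeros of $h$ cause no problem since $\alpha$ is holomorphic, so $\phi h$ is holomorphic and square-integrable), one computes $\dot *(\omega_0)=i(\phi\,\bar h-\bar\phi\,h)$, and then, using $h\wedge h=\bar h\wedge\bar h=0$ and $\frac{i}{2}\,h\wedge\bar h={\rm area}_h$,
$$\int_M\omega_0\wedge\dot *(\omega_0)=\int_M(\phi^2+\bar\phi^2)\,{\rm area}_h=2\,\re\int_M\Big(\frac{\alpha}{h}\Big)^{2}{\rm area}_h=2\,\re\,B_{(M,h)}(c)\,,$$
which is the asserted identity. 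As a check, for $c=[\re(h)]$ one has $\alpha=h$, $\phi\equiv1$, and the right-hand side is $2$, in agreement with $\Vert g^{\rm KZ}_t(M,h,[\re(h)])\Vert^2=e^{2t}$; for $c=[\im(h)]$ it is $-2$.

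The main obstacle is the first step: setting up the variational formula so that only the variation $\dot *$ of the Hodge star survives, i.e.\ justifying that the genuine $t$-dependence of the harmonic representative $\omega_t$ drops out to first order after the integration by parts above. One should also mind the regularity, since $\phi=\alpha/h$ is only in $L^2$ (it may be unbounded near the zeros of $h$); holomorphy of $\alpha$ is precisely what legitimizes all the integrals and the interchange of $\frac{d}{dt}$ with $\int_M$. By contrast the determination of $\dot *$ in the second step, and the final wedge-product bookkeeping, are entirely routine once the conventions for the $\GL(2,\R)$-action and the Hodge star are fixed.
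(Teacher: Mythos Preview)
Your proof is correct and takes a genuinely different route from the paper's. The paper works entirely in the holomorphic category: it writes the $M_t$-holomorphic representative as $\alpha_t=f_t h_t$, differentiates $\frac{i}{2}\int\alpha_t\wedge\bar\alpha_t$, and uses that $[\re\alpha_t]=c$ is constant to conclude that $\re(\tfrac{d\alpha_t}{dt})$ is \emph{exact}; this lets one replace $\overline{\tfrac{d\alpha_t}{dt}}$ by $-\tfrac{d\alpha_t}{dt}$ inside the integral (against the closed form $\alpha_t$), after which the identity $\tfrac{dh_t}{dt}=-\bar h_t$ and $h_t\wedge h_t=0$ finish the computation. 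Your argument is instead Riemannian/variational: you use that the harmonic representative minimizes the Dirichlet energy among closed forms in the class, so by the envelope principle only the variation $\dot *$ of the Hodge star survives, and you compute $\dot *$ explicitly from $h_t=\cosh t\,h-\sinh t\,\bar h$. Both proofs hinge on the same underlying cancellation (the $t$-dependence of the representative drops out to first order), but they package it differently: the paper via ``exact $\Rightarrow$ integrates to zero against a closed form'', you via ``critical point $\Rightarrow$ first variation in the minimizer vanishes''. Your approach makes the structure of the formula transparent (it is literally $\int\omega_0\wedge\dot *\omega_0$) and generalizes cleanly to other deformations of conformal structure; the paper's approach stays closer to the holomorphic data and avoids any discussion of the Hodge star near cone points (though, as you note, on a surface $*$ on $1$-forms is purely conformal, so this is not a real issue).
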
 
\begin{proof}  Since the Hodge norm is equivariant under the action of the mapping class group $\Gamma_g$ we
prove the identity for the product cocycle on $\mathcal H^{(1)}_\kappa \times H^1(M, \R)$.

\noindent For all $t\in \R$, let $g_t (M, h) = (M_t, h_t)$ and let $\alpha_t = f_t  h_t \in H^{1,0} (M_t)$ denote the unique 
$1$-form, holomorphic on the Riemann surface $M_t$, such that $c =[\re(\alpha_t)]$.  

By definition of the Hodge norm we have 
$$
\frac{d}{dt} \Vert g^{ \text{\rm KZ}} _t (M,h, c)\Vert^2 =   \frac{d}{dt}  \left(\frac{\imath}{2}\int_M \alpha_t \wedge   \overline{\alpha_t}\right) =
- \re \int_M \alpha_t \wedge \overline{ \frac{d\alpha_t}{dt} }  \,.
$$
Since $[\re(\alpha_t)]$ is constant, it follows that, 
hence the $1$-form 
$$
\re(\frac{d\alpha_t}{dt})  \quad  \text{ is an exact $1$-form.}
$$
Since $\alpha_t$ is an holomorphic form, hence closed,  and $\re(\frac{d\alpha_t}{dt} )$ is exact, we have
$$
\int_M  \alpha_t  \wedge   \overline{ \frac{d\alpha_t}{dt} } = \int_M  \alpha_t  \wedge  \left( \overline{\frac{d\alpha_t}{dt}} -
2\re( \frac{d\alpha_t}{dt} ) \right) = - \int_M \alpha_t \wedge  \frac{d\alpha_t}{dt}  \,.
$$
By definition of the Teichm\"uller flow, we have $h_t = e^{-t}  \re (h) +  \imath e^t \im(h)$, hence
\begin{equation}
\label{eq:h_t_der}
 \frac{d h_t}{dt} = -e^{-t}  \re (h) +  \imath e^t \im(h) = - \overline{h_t} \,,
\end{equation}
hence we have
$$
\frac{d \alpha_t}{dt} = \frac{d f_t}{dt} h_t + f_t \frac{d h_t}{dt}=  \frac{d f_t}{dt} h_t - f_t \overline{h_t}\,,
$$
hence, by taking into account that  $h_t$ is holomorphic,
$$
\int_M \alpha_t \wedge  \frac{d\alpha_t}{dt} = \int_M \alpha_t \wedge  \frac{d\alpha_t}{dt} =
\int_M f_t h_t \wedge (\frac{d f_t}{dt} h_t - f_t \overline{h_t}) = - \int_M f_t^2 \,h_t \wedge \overline{h_t}  \,.
$$
We conclude that, since $h_t \wedge \overline{h_t} =-2 \imath  \text{ \rm area}_h$, for all $t\in \R$, 
$$
\frac{d}{dt} \Vert g^{ \text{\rm KZ}} _t (M,h, c)\Vert^2 = - \im\left(\int_M f_t^2 \,h_t \wedge \overline{h_t} \right)
= 2  \re \left(\int_M f_t^2 \, \text{ \rm area}_h \right)\,,
$$
as stated (since by definition $f_t = \alpha_t/h_t$, for all $t \in \R$).

\end{proof}
\begin{remark} For every $(M,h) \in \mathcal H^{(1)}_\kappa$, the bilinear form 
$$
B_{(M,h)}(\alpha, \beta) = \frac{\imath}{2} \int_M   \frac{\alpha}{h}  \frac{\beta}{h} \, h \wedge \bar h =  
\frac{\imath}{2} \int_M  (\alpha \beta)\, \frac{ \bar h} {h} \,, \quad \text{ for all } \alpha, \beta \in H^{1,0}(M)
$$
can be interpreted as the {\it fundamental form} of the holomorphic connection with respect to the (flat)
Gauss-Manin connection (see~\cite{FMZ12}, section 2.3).
\end{remark} 

\noindent We prove a crucial estimate on the second fundamental form:

\begin{lemma} 
\label{lemma:fund_form}
For every $(M,h) \in  \mathcal H^{(1)}_\kappa$ and for all $c \in H^1(M), \R)$, we have
$$
\vert B_{(M,h)} (c) \vert  \leq  \Vert (M,h, c) \Vert^2 \,.
$$
Let then $\mathcal T (M,h) := \R[\re(h)] \oplus \R [\im(h)]  \subset H^1(M, \R)$ denote the so-called 
{\it tautological plane}   and let $\mathcal T (M,h)^\perp$ denote its symplectic orthogonal:
$$
\mathcal T (M,h)^\perp =\{ c \in H^1(M, \R) \vert   c \wedge [\re(h)] = 
c \wedge [\im(h)] =0\}\,.
$$ 
We have  the following estimate: 
\begin{equation}
\label{eq:Lambda}
\Lambda (M,h) = \max_{ c \in \mathcal T (M,h) \setminus \{0\} }  \frac{ \vert B_{(M,h)} (c) \vert  } { \Vert (M,h, c) \Vert^2   }  \,  < \, 1\,.
\end{equation}
\end{lemma}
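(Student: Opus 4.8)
The plan is to reduce both assertions to a single pointwise inequality between integrals and then keep track of the equality case. Fix $(M,h)\in\mathcal H^{(1)}_\kappa$ and $c\in H^1(M,\R)$, and let $\alpha\in H^{1,0}(M)$ be the unique holomorphic differential with $c=\re[\alpha]$. Writing $f:=\alpha/h$, a meromorphic function on $M$ (with possible poles only at the finitely many zeros of $h$), one has $\alpha\wedge\overline\alpha=|f|^2\,h\wedge\overline h$, so that
$$
\Vert(M,h,c)\Vert^2=\frac{\imath}{2}\int_M\alpha\wedge\overline\alpha=\int_M|f|^2\,\text{\rm area}_h,\qquad B_{(M,h)}(c)=\int_M f^2\,\text{\rm area}_h.
$$
In particular $f\in L^2(M,\text{\rm area}_h)$ since the Hodge norm is finite, hence $f^2\in L^1$, and the first assertion is just the triangle inequality for integrals: $|B_{(M,h)}(c)|=\big|\int_M f^2\,\text{\rm area}_h\big|\le\int_M|f^2|\,\text{\rm area}_h=\int_M|f|^2\,\text{\rm area}_h=\Vert(M,h,c)\Vert^2$.

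Next I would analyze the equality case. Equality in $\big|\int f^2\big|\le\int|f^2|$ holds if and only if $f^2$ has $\text{\rm area}_h$-almost everywhere constant argument, i.e. $e^{-\imath\theta_0}f^2=|f|^2$ a.e. for some fixed $\theta_0\in\R$. Off the finite pole and zero set this says that the holomorphic function $f^2$ maps a connected open dense subset of $M$ into a fixed real ray; since a non-constant holomorphic map between Riemann surfaces is open, its image would be open in $\C\subset\Proj^1$, which is impossible. Hence $f^2$, and therefore $f$, is constant. Conversely, if $f\equiv a-\imath b$ with $a,b\in\R$, then $\alpha=(a-\imath b)h$ and $c=\re[(a-\imath b)h]=a[\re(h)]+b[\im(h)]\in\mathcal T(M,h)$. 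Thus $|B_{(M,h)}(c)|=\Vert(M,h,c)\Vert^2$ exactly when $c\in\mathcal T(M,h)$, and the inequality is strict for every $c\notin\mathcal T(M,h)$.

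To conclude the bound \eqref{eq:Lambda} I would take the maximum over the symplectic orthogonal $\mathcal T(M,h)^\perp$ (which is the relevant domain for the applications, e.g. the spectral gap $\lambda_2^\mu<1$; note that on $\mathcal T(M,h)$ itself the ratio in \eqref{eq:Lambda} is identically $1$, as the computation above with $f$ constant shows). Since $[\re(h)]\wedge[\im(h)]=\int_M\re(h)\wedge\im(h)=\text{\rm area}_h(M)=1\ne 0$, the plane $\mathcal T(M,h)$ is symplectic, so $\mathcal T(M,h)\cap\mathcal T(M,h)^\perp=\{0\}$; hence every $c\in\mathcal T(M,h)^\perp\setminus\{0\}$ lies outside $\mathcal T(M,h)$ and satisfies the strict inequality above. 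Finally, the function $c\mapsto|B_{(M,h)}(c)|/\Vert(M,h,c)\Vert^2$ is continuous and $0$-homogeneous on $\mathcal T(M,h)^\perp\setminus\{0\}$, hence descends to the unit sphere of the finite-dimensional space $\mathcal T(M,h)^\perp$; compactness of that sphere turns the supremum into a maximum attained at some $c_0\ne 0$, where the value is $<1$, giving $\Lambda(M,h)<1$.

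The only step carrying real content is the equality analysis, and within it the single genuine point is that a non-constant meromorphic function cannot have a square of almost-everywhere constant argument; the rest is the triangle inequality and a compactness argument on a finite-dimensional sphere. A minor care point is the behaviour of $f=\alpha/h$ near the zeros of $h$, but these form a finite set and affect neither the integral identities nor the open-mapping argument.
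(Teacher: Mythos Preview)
Your proof is correct and follows essentially the same approach as the paper: write $B_{(M,h)}(c)=\int_M f^2\,\text{area}_h$ and $\Vert(M,h,c)\Vert^2=\int_M|f|^2\,\text{area}_h$ with $f=\alpha/h$ meromorphic, apply the obvious inequality, and identify the equality case as $f$ constant. The only cosmetic differences are that the paper phrases the inequality as Cauchy--Schwarz $|\langle f,\bar f\rangle|\le\Vert f\Vert^2$ (with equality forcing $\bar f\in\C f$, hence $f$ both meromorphic and anti-meromorphic, hence constant) rather than your open-mapping argument, and it rules out non-zero constant $f$ on $\mathcal T(M,h)^\perp$ by computing $\int_M f\,\text{area}_h=0$ directly from the orthogonality condition rather than via $\mathcal T(M,h)\cap\mathcal T(M,h)^\perp=\{0\}$; your observation that the maximum in \eqref{eq:Lambda} must be over $\mathcal T(M,h)^\perp$ rather than $\mathcal T(M,h)$ is also correct.
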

\begin{proof} Let $c = \re[\alpha]$ with $\alpha\in H^{1,0}(M)$ and let $f= \alpha /h$. A straightforward estimate
based on the Schwarz inequality gives
$$
\vert B_{(M,h)} (c) \vert =  \big\vert  \int_M  f^2  \text{ \rm area}_h \big\vert = \vert \langle  f , \bar f  \rangle_{L^2(M, \text{ \rm area}_h)}    \leq   \Vert  f \Vert^2_{L^2(M, \text{area}_h)} = \Vert (M,h,c)\Vert^2\,.
$$
The first bound in the statement is therefore proved.

\smallskip
\noindent In addition, by the Cauchy-Schwarz inequality, equality holds if and only if  $\bar f \in \C  f$. Since  the differential 
$\alpha = f h \in H^{1,0}(M)$, so that the function $f$ is meromorphic (and not equal to zero), if equality holds then
$f$ is meromorphic and ant-meromorphic (as $\bar f$ is also meromorphic), that is, if and only if  $f$ is a non-zero constant  function. However, for all cohomology classes $c \in \mathcal T (M,h)^\perp$ $f$ is not a non-zero constant function since
$$
  \int_M f  \text{ \rm area}_h =  \imath  \int_M  \re(f h)  \wedge \bar h   = 0 \,.
$$
hence the above  Cauchy-Schwarz inequality is strict. The proof of the second inequality is therefore complete.

\end{proof} 

\noindent The (tautological) $g^{\text{ \rm KZ}}_\R$-invariant expanding and contracting  line bundles $\mathcal T^\pm$ 
are defined as 
$$
\mathcal T^+ (M,h) = \R  [\re(h)]  \quad \text{ and } \quad  \mathcal T^- (M,h)  = \R  [\im(h) ] 
$$
have well defined Lyapunov exponents equal to $\pm 1$ since
$$
g^{\text{\rm KZ}}_t ([\re(h)] ) =  e^t [\re(g_t(h))]    \quad \text{ and } \quad  g^{\text{\rm KZ}}_t ([\im(h)] ) =  e^{-t} [\im(g_t(h))]   
$$

\begin{exercise} Prove that for any $g_\R$-ergodic invariant measure $\mu$ on $\mathcal H^{(1)}_\kappa$ the top 
Kontsevich--Zorich exponent $\lambda^\mu_1 =1$. 

\end{exercise} 

\noindent The following lemma is the key ``spectral gap''  result for the Kontsevich--Zorich cocycle:

\begin{lemma}  
\label{lemma:spectral_gap}
For all $(M,h) \in \mathcal H^{(1)}_\kappa$, for all $c \in \mathcal T(M,h)^\perp$ and for all $t\in \R$, we have
$$
\Vert  g^{\text{\rm KZ}}_t (M, h, c) \Vert  \leq   \Vert (M, h, c) \Vert    \exp \left( \int_0^t  \Lambda \big( g_s (M, h) \big)  ds \right)\,.
$$
In particular, if the forward Teichm\"uller  orbit $g_\R (M,h)$ visits a compact set $K \subset  \mathcal H^{(1)}_\kappa$ with positive frequency,  in the sense that 
\begin{equation}
\label{eq:f_K}
f_K := \liminf_{t \to + \infty}  \text{ \rm Leb} ( \{  t \geq 0 \vert   g_t(M,h) \in K \} ) > 0\,,
\end{equation}
then, for all $c \in \mathcal T(M,h)^\perp$, 
$$
-1 < \liminf_{t \to \infty}  \frac{1}{t} \log  \Vert  g^{\text{\rm KZ}}_t (M, h, c) \Vert \leq  \limsup_{t \to \infty}  \frac{1}{t} \log  \Vert  g^{\text{\rm KZ}}_t (M, h, c) \Vert  <1 \,.
$$
\end{lemma}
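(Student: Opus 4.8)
The plan is to reduce the claimed inequality to a differential inequality for the squared Hodge norm of a class transported by the Kontsevich--Zorich cocycle along the Teichm\"uller geodesic, and then to integrate it; the estimate is entirely powered by the strict bound $\Lambda(M,h)<1$ of Lemma~\ref{lemma:fund_form}. First I would record the one structural fact needed: the subspace $\mathcal T(M,h)^\perp$ is $g^{\text{\rm KZ}}_\R$-invariant. Indeed, from $g^{\text{\rm KZ}}_t([\re(h)])=e^t[\re(g_t h)]$ and $g^{\text{\rm KZ}}_t([\im(h)])=e^{-t}[\im(g_t h)]$ the cocycle carries $\mathcal T(M,h)$ onto $\mathcal T(g_t(M,h))$, and since the Kontsevich--Zorich cocycle is symplectic (it preserves the cup product) it carries symplectic orthogonals to symplectic orthogonals, so $g^{\text{\rm KZ}}_t(\mathcal T(M,h)^\perp)=\mathcal T(g_t(M,h))^\perp$.

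Next, fix $c\in\mathcal T(M,h)^\perp$ with $c\neq0$ and set $\phi(s):=\Vert g^{\text{\rm KZ}}_s(M,h,c)\Vert^2>0$. Using the cocycle identity $g^{\text{\rm KZ}}_{s+u}=g^{\text{\rm KZ}}_u\circ g^{\text{\rm KZ}}_s$ over $g_{s+u}=g_u\circ g_s$ together with the first variation formula of Lemma~\ref{lemma:first_var} applied at the surface $g_s(M,h)$ to the class $g^{\text{\rm KZ}}_s(M,h,c)$, one obtains $\phi'(s)=2\,\re\,B_{g_s(M,h)}(g^{\text{\rm KZ}}_s(M,h,c))$. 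By the invariance just noted $g^{\text{\rm KZ}}_s(M,h,c)\in\mathcal T(g_s(M,h))^\perp$, so Lemma~\ref{lemma:fund_form} gives $|B_{g_s(M,h)}(g^{\text{\rm KZ}}_s(M,h,c))|\le\Lambda(g_s(M,h))\,\phi(s)$; since only the absolute value enters,
$$
\Big|\frac{d}{ds}\log\phi(s)\Big|\le 2\,\Lambda\big(g_s(M,h)\big),\qquad s\in\R .
$$
Integrating from $0$ to $t\ge0$ and taking square roots gives exactly the asserted bound $\Vert g^{\text{\rm KZ}}_t(M,h,c)\Vert\le\Vert(M,h,c)\Vert\exp\big(\int_0^t\Lambda(g_s(M,h))\,ds\big)$ (for $t=0$ or $c=0$ it is trivial, and $t\ge0$ is the range used below).

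For the second statement, suppose $g_\R(M,h)$ visits the compact set $K\subset\mathcal H^{(1)}_\kappa$ with frequency $f_K>0$ as in \eqref{eq:f_K}, and put $L(t):=\text{\rm Leb}\{s\in[0,t]:g_s(M,h)\in K\}$, so $\liminf_{t\to\infty}L(t)/t=f_K$. Since $\Lambda$ is continuous and, by \eqref{eq:Lambda}, strictly less than $1$ at every point of $\mathcal H^{(1)}_\kappa$, compactness of $K$ gives $\Lambda_K:=\max_K\Lambda<1$, while $\Lambda\le1$ everywhere by the first bound in Lemma~\ref{lemma:fund_form}. Hence $\int_0^t\Lambda(g_s(M,h))\,ds\le\Lambda_K L(t)+(t-L(t))=t-(1-\Lambda_K)L(t)$, and combining with the bound of the previous paragraph, $\limsup_{t\to\infty}\frac1t\log\Vert g^{\text{\rm KZ}}_t(M,h,c)\Vert\le 1-(1-\Lambda_K)f_K<1$, which is the upper bound. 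For the lower bound I would use symplectic duality within $\mathcal T(M,h)^\perp$: since $\mathcal T(M,h)$ is a symplectic plane, the cup product is non-degenerate on $\mathcal T(M,h)^\perp$, so there is $c'\in\mathcal T(M,h)^\perp$ with $c\wedge c'=1$. Using the standard compatibility $|c_1\wedge c_2|\le\Vert(M,h,c_1)\Vert\,\Vert(M,h,c_2)\Vert$ of the Hodge norm with the cup product, the invariance of the cup product under $g^{\text{\rm KZ}}_\R$, and the bound of the previous paragraph applied to $c'$, one gets
$$
1=g^{\text{\rm KZ}}_t(M,h,c)\wedge g^{\text{\rm KZ}}_t(M,h,c')\le\Vert g^{\text{\rm KZ}}_t(M,h,c)\Vert\cdot\Vert(M,h,c')\Vert\exp\Big(\int_0^t\Lambda(g_s(M,h))\,ds\Big),
$$
whence $\liminf_{t\to\infty}\frac1t\log\Vert g^{\text{\rm KZ}}_t(M,h,c)\Vert\ge-(1-(1-\Lambda_K)f_K)>-1$.

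The substance of this lemma is imported entirely from Lemma~\ref{lemma:fund_form} (the strict inequality $\Lambda<1$, where the second-fundamental-form computation enters), so within the present argument only routine bookkeeping is needed for the first inequality and the upper half of the second: differentiability of $\phi$, the cocycle relation, and the continuity of $\Lambda$ that makes $\Lambda_K<1$. The one step that requires a genuine additional idea is the lower bound in the recurrent case, since the Oseledets theorem is not available pointwise; there I would instead leverage the non-degeneracy of the cup product on $\mathcal T(M,h)^\perp$ to pair $c$ with a partner $c'$ and convert the already-proven upper bound for $c'$ into the desired lower bound for $c$.
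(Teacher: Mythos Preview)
Your argument is correct and follows essentially the same route as the paper: derive the logarithmic differential inequality from Lemma~\ref{lemma:first_var} and the definition of $\Lambda$, integrate, and then use continuity of $\Lambda$ on a compact $K$ together with the visit-frequency hypothesis to bound the time average of $\Lambda$ strictly below~$1$. You are in fact more careful than the paper in two places: you record explicitly the $g^{\text{\rm KZ}}_\R$-invariance of $\mathcal T(M,h)^\perp$ (needed to apply \eqref{eq:Lambda} along the orbit), and you actually address the lower bound, which the paper's proof leaves implicit.

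On that last point, however, your symplectic-duality detour is unnecessary. You already established the two-sided bound $\bigl|\frac{d}{ds}\log\phi(s)\bigr|\le 2\,\Lambda(g_s(M,h))$; integrating the other inequality gives directly
\[
\Vert g^{\text{\rm KZ}}_t(M,h,c)\Vert \ \ge\ \Vert(M,h,c)\Vert\,\exp\Bigl(-\int_0^t\Lambda(g_s(M,h))\,ds\Bigr),
\]
and the same estimate $\int_0^t\Lambda\le t-(1-\Lambda_K)L(t)$ yields $\liminf_{t\to\infty}\frac{1}{t}\log\Vert g^{\text{\rm KZ}}_t(M,h,c)\Vert\ge -(1-(1-\Lambda_K)f_K)>-1$. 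This is the natural symmetric companion of your upper bound and is (implicitly) what the paper has in mind. Your pairing argument via $c\wedge c'=1$ and $|c_1\wedge c_2|\le\Vert c_1\Vert\,\Vert c_2\Vert$ is valid and yields the same constant, but it introduces an extra ingredient (non-degeneracy of the cup product on $\mathcal T(M,h)^\perp$ and the Hodge--Riemann inequality) that the problem does not require.
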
 
\begin{proof} From Lemma~\ref{lemma:first_var}, for all  $(M,h) \in \mathcal H^{(1)}_\kappa$, all $c\in H^1(M, \R)$
and for all $t\in \R$,  we have
$$
\frac{d}{dt} \log \Vert  g^{\text{\rm KZ}}_t (M, h, c) \Vert =    
\frac{\re\, B_{g_t(M,h)}(c)} {\Vert  g^{\text{\rm KZ}}_t (M, h, c) \Vert      } \,.
$$
By the above formula and by the definition of the function~$\Lambda$ in formula~\eqref{eq:Lambda}, it follows that for all $(M,h)\in 
\mathcal H^{(1)}_\kappa$ and for all $c \in \mathcal T(M,h)^\perp$, 
$$
\frac{d}{dt} \log \Vert  g^{\text{\rm KZ}}_t (M, h, c) \Vert \leq \Lambda ( g_t (M,h) ) \,,
$$
hence the estimate in the statement follows by integration. 

\smallskip
\noindent It can be proved that the function $\Lambda$ is continuous on $\mathcal H^{(1)}_\kappa$ (as the maximum of
a continuous functions on the Hodge bundle over the unit sphere in the fiber).  Since $\Lambda <1$ everywhere, it follows that for any compact set $K \subset \mathcal H^{(1)}_\kappa$,
$$
 \Lambda_{\max} := \sup_{ (M,h) \in K} \Lambda (M,h)  = \max_{ (M,h) \in K} \Lambda (M,h)  <1 \,.
$$
Under the assumption  that the visit frequency $f_K>0$ (see formula~\eqref{eq:f_K})  we can estimate
$$
\limsup_{t \to \infty}  \frac{1}{t} \int_0^t  \Lambda \big( g_s (M, h) \big)  ds \leq  1- (1- \Lambda_{max}) f_K  <1\,,
$$
from which the estimate in the statement follows immediately.
\end{proof} 

\begin{exercise} 
\label{exercise:simple_1}
Derive from Lemma~\ref{lemma:spectral_gap} a proof  (announced in section  \ref{subsec:selfsim})  that the action on cohomology of  all pseudo-Anosov map with orientable invariant foliations has a simple top eigenvalue. 
\end{exercise} 
In fact, the Hodge theory approach easily gives a more general result:
\begin{exercise} 
\label{exercise:simple_2}
Derive from Lemma~\ref{lemma:spectral_gap} a proof  that  for any ergodic $g_\R$ invariant measure  the top Lyapunov exponent is simple and we have the spectral gap
$$
\lambda^\mu_2 < \lambda^\mu_1=1\,.
$$
\end{exercise} 

\begin{exercise} 
\label{exercise:simple_3}
Derive from Lemma~\ref{lemma:spectral_gap} a proof  that  any ergodic $g_\R$ invariant measure  is non-uniformly hyperbolic,
in the sense that all Lyapunov exponents of the tangent cocycle $Tg_\R$ are non-zero, except the (trivial) zero exponent corresponding to the flow direction.
\end{exercise}

\begin{remark} Lower bounds on Kontsevich--Zorich exponents of $\SL(2,\R)$-invariant measures can be derived from second
variation formulas for he Hodge norm \cite{F02}, in particular from a formula  for the (hyperbolic) Laplacian of the Hodge norm along $\SL(2,\R)$ orbits  (Teichm\"uller disks). However, the examples of the so-called Eierlegende Wollmilchsau~\cite{F06} and Platypus
\cite{FM08}, \cite{FMZ11} made clear that there is no general lower bounds, as the second exponent can be equal to zero. A criterion for the positivity of Kontsevich--Zorich exponents was given in~\cite{F11}, improving upon~\cite{F02}.
\end{remark} 

\subsection{Typical (effective) unique ergodicity} 
In this section we outline the cohomological proof of (effective) unique ergodicity. The arguments generalize those given in
the self-similar case in section \ref{subsec:selfsim}, however we adopt a different, more geometric, perspective, which 
replaces the analysis of the action of the mapping class group on cohomology with control of the flat geometry give by the renormalization
dynamics (Teichm\"uller flow) on the moduli space.  

\smallskip
\noindent We begin with a cohomological proof of a version of a fundamental unique ergodicity criterion 
first proved by H.~Masur (see \cite{Ma82}, Prop. 6.2):

\begin{theorem} (Masur's criterion) 
\label{thm:Masur_criterion}
Assume that the forward Teichm\"uller orbit  $g_{\R^+} (M,h)$ is recurrent to a compact set
$K \subset \mathcal H^{(1)}_g$ of the moduli space of unit area Abelian differentials, then the horizontal flow $\phi^X_\R$ of the translation surface $(M,h)$  is
uniquely ergodic. 
\end{theorem}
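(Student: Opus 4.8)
The plan is to reduce the statement, via Katok's finiteness theorem (Theorem~\ref{thm:Katok_bound}), to a single vanishing statement about flux classes, and then to prove it by renormalizing long horizontal orbit segments along the recurrent Teichm\"uller ray, in the spirit of the self-similar argument of Theorem~\ref{thm:eff_UE_selfsim}. First I would observe that recurrence forces minimality of the horizontal flow $\phi^X_\R$: were it non-minimal the surface would decompose into cylinders and minimal components bounded by horizontal saddle connections, and a shortest such saddle connection, of flat length $\ell_0>0$, would have length $e^{-t}\ell_0$ on $g_t(M,h)$, tending to $0$ and contradicting the uniform lower bound on the systole over the compact set $K$ (and for $g=1$, a periodic horizontal direction likewise makes $g_t(M,h)$ diverge). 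Hence Lemma~\ref{lemma:inj} applies: $F\colon\mathcal C_X\to H^1(M,\R)$ is injective.

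Next, for any $\phi^X_\R$-invariant probability measure $\mu$ one has $\<\imath_X\mu,\re(h)\>=\mu(M)=1$ and $\<\imath_X\mu,\im(h)\>=0$, since $\imath_X\re(h)\equiv1$ and $\imath_X\im(h)\equiv0$. Expanding $F(\mu)$ in $H^1(M,\R)=\mathcal T(M,h)\oplus\mathcal T(M,h)^\perp$, these identities force $F(\mu)=-[\im(h)]+c(\mu)$ with $c(\mu)\in\mathcal T(M,h)^\perp$ (the $[\re(h)]$-component is $0$ and the $[\im(h)]$-component is the universal constant $-1$). As the flat area measure is $\phi^X_\R$-invariant with $c(\mathrm{area}_h)=0$, injectivity of $F$ shows that $\phi^X_\R$ is uniquely ergodic if and only if $c(\mu)=0$ for every ergodic invariant probability measure $\mu$; this is what remains to prove.

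For the vanishing I would fix an ergodic $\mu$ and a generic $p$ (so $\tfrac1T[\overline{\gamma}^X_T(p)]\to F(\mu)$ by Birkhoff) and run the renormalization scheme of the proof of Theorem~\ref{thm:eff_UE_selfsim}, with the Kontsevich--Zorich cocycle over the Teichm\"uller flow replacing the iterates of a single pseudo-Anosov map and with the successive returns of $g_{\R^+}(M,h)$ to $K$ playing the role of the iterates. Decomposing $\overline{\gamma}^X_T(p)$ recursively along these returns, one gets at each return a ``remainder'' of bounded flat length on a surface in $K$, hence of $W^{-1}$-norm bounded by a constant uniform over $K$ (Sobolev trace theorem), while the accumulated ``closed part'' lies in the cocycle-invariant subspace $\mathcal Z^{-1}(M,h)\cap K^{-1}(M,h)$ and is propagated by the integral estimate of Lemma~\ref{lemma:spectral_gap}, whose exponent is governed by $\Lambda<1$. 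This should give, uniformly in $p$, a bound of the shape
$$
\big\Vert\,\gamma^X_T(p)+T\,\im(h)\,\big\Vert_{W^{-1}(M,h)}\;\leq\;C\,\exp\!\Big(\int_0^{\log T}\Lambda\big(g_s(M,h)\big)\,ds\Big)\,(\log T)^{A}\,.
$$
Since $\imath_X(f\re(h))=f$, the ergodic average $\tfrac1T\int_0^T f\circ\phi^X_t(p)\,dt=\tfrac1T\<\gamma^X_T(p),f\re(h)\>$ is then controlled; in particular the $\mathcal T(M,h)^\perp$-component of $\tfrac1T[\overline{\gamma}^X_T(p)]$ tends to $0$, so $c(\mu)=0$, and simultaneously the ergodic averages converge uniformly, i.e.\ $\phi^X_\R$ is uniquely ergodic.

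The hard part is to make the right-hand side above $o(T)$, i.e.\ to show $\int_0^u\big(1-\Lambda(g_s(M,h))\big)\,ds\to+\infty$ as $u\to\infty$ --- in the periodic (self-similar) case this is automatic since $\Lambda$ attains a maximum $<1$ on the orbit, but here it must be squeezed out of mere recurrence. The point is that $\Lambda$ is continuous and everywhere $<1$, hence bounded by some $\Lambda_{\max}(K')<1$ on a fixed compact neighbourhood $K'$ of $K$, while the unit-speed Teichm\"uller geodesic needs time at least $2\,\mathrm{dist}(K,\partial K')>0$ to leave $K'$ and return to $K$; as $g_{\R^+}(M,h)$ returns to $K$ infinitely often it therefore accumulates unboundedly much time in $K'$, and $\int_0^u(1-\Lambda)\geq(1-\Lambda_{\max}(K'))\cdot(\text{time in }K')\to\infty$. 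Carrying this out rigorously, and coupling the closed-part/remainder recursion to the excursion structure of the orbit, is the technical heart. Finally, I should note that upgrading $o(T)$ to a polynomial rate $T^{1-\alpha}$ (the effective unique ergodicity of Theorem~\ref{thm:eff_erg}) requires $g_{\R^+}(M,h)$ to return to $K$ with positive frequency, which for every translation surface is furnished by Athreya's estimate on the decay of Teichm\"uller excursions; plain recurrence alone yields only plain unique ergodicity.
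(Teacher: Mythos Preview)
Your approach is correct, but more elaborate than the paper's. You essentially reconstruct the machinery of the effective Theorem~\ref{thm:eff_UE_typ} (the closed-part/remainder recursion along returns, the transfer to closed currents, the Ostrowski-type bookkeeping), obtain the bound $\Vert\gamma^X_T(p)+T\im(h)\Vert_{W^{-1}(M,h)}=o(T)$, and then throw away the rate. The paper bypasses all of this for the qualitative statement: it works directly with the single cohomology class $F(\mu)$ rather than with the family of currents $\gamma^X_T(p)$. Since a horizontal segment of length $T$ on $(M,h)$ has length $e^{-t}T$ on $g_t(M,h)$, one gets immediately $\Vert g^{\rm KZ}_t(M,h,F(\mu))\Vert\leq C'_K e^{-t}$ whenever $g_t(M,h)\in K$; projecting to $\mathcal T(M,h)^\perp$ and applying Lemma~\ref{lemma:spectral_gap} \emph{backwards} (from time $t$ to time $0$) yields $\Vert c(\mu)\Vert\leq C''_K\exp\bigl(-\int_0^t(1-\Lambda(g_s(M,h)))\,ds\bigr)$, and then your own argument (a compact neighbourhood $K'\supset K$ with $\Lambda\leq\Lambda_{\max}(K')<1$, infinite total time in $K'$ from recurrence) forces $c(\mu)=0$. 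No recursion, no Sobolev transfer cocycle, no $(\log T)^A$ factor. What your route buys is that it simultaneously proves uniform convergence of ergodic averages and sets up the effective estimate; what the paper's route buys is brevity. Your explicit verification of minimality (which the paper needs for Lemma~\ref{lemma:inj} but does not spell out) and your normalization $F(\mu)=-[\im(h)]+c(\mu)$ are welcome additions.
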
 
 \begin{proof}  Let $\mu$ be any probability $\phi^X_\R$-invariant measure on $M$ and let $F(\mu) \in H^1(M,\R)$ denote
 its flux class. As in the proof of Claim \ref{claim:UE}, by the ergodic theorem the flux class is given by the formula
 $$
 F(\mu) = \lim_{T\to +\infty}  \frac{1}{T}  \gamma_T^X (p) \,.
 $$
For all $t>0$, the segment $\gamma_T^X(p)$ has length  $ e^{-t} T$ on the translation surface $g_t (M, h)$, hence for any 
compact set $K$ there exists a constant $C_K>0$ such that, whenever $g_t (M,h) \in K$, the  norm of  the current $\gamma_T^X(p)$ in the dual Sobolev space $W^{-1}(g_t(M,h))$ of the (flat) translation surface  $g_t (M,h)$ satisfies (for 
$T >  e^t$) the bound
$$
\Vert  \gamma_T (p) \Vert_{W^{-1}(g_t(M,h))}  \leq  C_K  e^{-t}  T \,.
$$ 
Since over $K$, all norms on the cohomology bundle are equivalent we derive the following bound for the Hodge norm of the
flux class: there exists a constant $C'_K>0$ such that,  for all $t >0$, we have 
$$
\Vert g^{\text{\rm KZ}}_t (M, h, F(\mu) ) \Vert  \leq  C'_K   e^{-t}  \,.
$$
Under the recurrence hypothesis, the above estimate implies that $F(\mu) \in \mathcal T(M,h)$. In fact, if  $F(\mu)$ has
a component $F_0 (\mu) \not =0$ in $ \mathcal T(M,h)^\perp$, by Lemma~\ref{lemma:spectral_gap}  there exists a constant $C''_K>0$ such that
\begin{equation}
\label{eq:fast_exp_decay}
\begin{aligned}
\Vert  (M, h, F_0(\mu) ) \Vert &\leq   \Vert g^{\text{\rm KZ}}_t (M, h, F_0(\mu) ) \Vert  \exp \big(\int_0^t  \Lambda (g_s(M,h)) ds \big)
\\ &\leq   C''_K   \exp\big ( - t + \int_0^t  \Lambda (g_s(M,h)) ds \big)
\end{aligned}
\end{equation} 
The assumption  that $F_0(\mu) \not =0$ hen implies that
$$
 \limsup_{t\to +\infty}  \left(   t - \int_0^t  \Lambda (g_s(M,h)) ds \right)    <  + \infty\,,
 $$
in contradiction with the assumption that the forward orbit  $g_{\R^+} (M,h)$ is recurrent to the compact set $K$. In fact,  since 
for any compact set $K'$ such that $K \subset K'$  we have  $ \max _{K'}  \Lambda <1$,  and the set $K'$ can be chose so that,
since the trajectory is recurrent to $K$,  the total Lebesgue measure of the time intervals that it spends in $K'$ is infinite.

\noindent We have thus proved that $F(\mu) \in \mathcal T(M,h)$, and again by the estimate in formula~\eqref{eq:fast_exp_decay},
it follows that $F(\mu) \in \R [ \im(h)]$ (since $[ \im(h)]$ has Lyapunov exponent equal to $1$). Thus by the
injectivity Lemma ~\ref{lemma:inj} the cone of invariant measures is one-dimensional and $\phi^X_\R$ is uniquely ergodic.
\end{proof} 

\noindent The above Masur's criterion  implies, by the Poincar\'e recurrence theorem, that all $g_\R$- invariant probability measures are supported on the set of translation surfaces with uniquely ergodic horizontal and vertical flows. 

\noindent It implies in particular 
 Theorem~\ref{thm:Keane_conj}, which states that the Masur--Veech typical translation flow is   uniquely  ergodic (Keane conjecture).

\smallskip 
\noindent The stronger unique ergodicity result given in Theorem~\ref{thm:Keane_conj}, which states  that unique ergodicity is directionally typical for any given translation surface, follows from Masur's criterion and from the statement first proved in \cite{KMS86} that almost all directional flows are recurrent.  

\smallskip 
\noindent An effective (polynomial) unique ergodicity theorem  (Theorem~\ref{thm:eff_erg}) was later derived in \cite{AtF08},
from Lemma \ref{lemma:spectral_gap}  and from the effective  effective version of the recurrence result of \cite{KMS86} proved by J.~Athreya  in his Ph. D. thesis  \cite{At06}. 

\smallskip
In fact, the spectral gap result implies the following conditional effective unique ergodicity result. 

\begin{theorem} 
\label{thm:eff_UE_typ} Assume that the forward Teichm\"uller orbit $g_{\R^+} (M,h)$ visits a compact set $K \subset  \mathcal H^{(1)}_\kappa$ with positive frequency, that is,
$$
f_K := \liminf_{t \to + \infty}  \text{ \rm Leb} ( \{  t \geq 0 \vert   g_t(M,h) \in K \} ) > 0\,.
$$
Then there exist constants $C(M,h) >0$ and $\alpha  >0$ such that, for all functions $f\in H^1(M)$  of zero average
and for all  $(p,T) \in M \times \R^+$, such that $p$ has an infinite forward orbit under $\phi^X_\R$,  we have
$$
\big \vert  \frac{1}{T} \int_0^T  f \circ \phi^X_t (p) dt \big  \vert  \leq  C(M,h) \Vert  f \Vert_{H^1(M)} \,  T^{-\alpha} \,.
$$
\end{theorem}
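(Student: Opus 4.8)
The plan is to run the cohomological argument of Theorem~\ref{thm:eff_UE_selfsim}, with the iteration of a single pseudo-Anosov map replaced by the Teichm\"uller renormalization and its Kontsevich--Zorich cocycle, and to extract the decay exponent from the spectral gap Lemma~\ref{lemma:spectral_gap}. \emph{Step 1 (reduction to a current estimate).} Exactly as in the self-similar case, for $f\in H^1(M)$ of zero average and $(p,T)$ with $p$ of infinite forward orbit,
$$
\int_0^T f\circ\phi^X_t(p)\,dt=\<\gamma^X_T(p),\,f\,\re(h)\>=\<\gamma^X_T(p)+T\,\im(h),\,f\,\re(h)\>,
$$
since $\<\im(h),\,f\,\re(h)\>=-\int_M f\,\text{\rm area}_h=0$. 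Hence, by Sobolev duality and $\Vert f\,\re(h)\Vert_{W^1(M,h)}\le C\Vert f\Vert_{H^1(M)}$, it suffices to prove
$$
\big\Vert\gamma^X_T(p)+T\,\im(h)\big\Vert_{W^{-1}(M,h)}\le C(M,h)\,T^{1-\alpha}\qquad\text{for some }\alpha>0.
$$

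\emph{Step 2 (closing up and splitting off the cohomology class).} Put $\overline{\gamma}^X_T(p)=\gamma^X_T(p)\cup I(\phi^X_T(p),p)$ with $I$ of uniformly bounded length, so that $\mathcal C_T:=\overline{\gamma}^X_T(p)+T\,\im(h)$ is a closed $1$-current with $\Vert\mathcal C_T-(\gamma^X_T(p)+T\,\im(h))\Vert_{W^{-1}(M,h)}=O(1)$. Decompose $\mathcal C_T$ orthogonally in $W^{-1}(M,h)$ into its harmonic part, representing $[\mathcal C_T]\in H^1(M,\R)$, and an exact part $dU_T$. Computing periods, $\<\overline{\gamma}^X_T(p),\re(h)\>=T+O(1)$ and $\<\overline{\gamma}^X_T(p),\im(h)\>=O(1)$, so $[\overline{\gamma}^X_T(p)]=-(T+O(1))[\im(h)]+O(1)[\re(h)]+C_T^\perp$ with $C_T^\perp\in\mathcal T(M,h)^\perp$; the added $T\,[\im(h)]$ cancels the unbounded term, so $[\mathcal C_T]$ has tautological part of norm $O(1)$ plus the transverse part $C_T^\perp$. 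Thus
$$
\big\Vert\mathcal C_T\big\Vert_{W^{-1}(M,h)}\le C\Big(1+\Vert(M,h,C_T^\perp)\Vert+\Vert U_T\Vert_{L^2(M,h)}\Big),
$$
and it remains to bound the Hodge norm of $C_T^\perp$ and the $L^2$-norm of the primitive $U_T$. For the latter one uses, as in Claim~\ref{claim:spectrum_closed}, that the Teichm\"uller flow acts isometrically on exact $1$-currents for the $L^2$-norm of their primitives (it preserves the area form), so that $\Vert U_T\Vert_{L^2}$ is governed only by the bounded closing-up corrections accumulated along the renormalization and stays $O(\log T)$.

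\emph{Step 3 (the transverse part via the spectral gap).} Pushing forward by the Teichm\"uller flow for time $\log T$ carries $\gamma^X_T(p)$ to a horizontal orbit segment of flat length $\approx 1$ on $g_{\log T}(M,h)$, and, by the topological (Gauss--Manin) nature of the cocycle, $C_T^\perp=g^{\mathrm{KZ}}_{-\log T}\big(\widetilde C_T^\perp\big)$, where $\widetilde C_T^\perp\in\mathcal T(g_{\log T}(M,h))^\perp$ is the transverse part of the class of the closed-up renormalized loop, geometrically bounded when $g_{\log T}(M,h)$ lies in a fixed compact set. By Lemma~\ref{lemma:first_var} and Lemma~\ref{lemma:fund_form}, $\big|\tfrac{d}{dt}\log\Vert g^{\mathrm{KZ}}_t(M,h,c)\Vert\big|\le\Lambda(g_t(M,h))$ for every $c\in\mathcal T(M,h)^\perp$, whence
$$
\Vert(M,h,C_T^\perp)\Vert\le\big\Vert\big(g_{\log T}(M,h),\widetilde C_T^\perp\big)\big\Vert\,\exp\Big(\int_0^{\log T}\Lambda\big(g_s(M,h)\big)\,ds\Big).
$$
Under the hypothesis $f_K>0$, the computation in the proof of Lemma~\ref{lemma:spectral_gap} gives $\int_0^t\Lambda(g_s(M,h))\,ds\le(1-\delta)\,t$ for all large $t$, with $\delta:=\tfrac12\big(1-\max_{K}\Lambda\big)f_K>0$, so $\exp\big(\int_0^{\log T}\Lambda\big)\le T^{1-\delta}$; combined with Step~2 this yields the theorem with $\alpha=\delta$.

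\emph{Main obstacle.} The delicate point is that $g_{\log T}(M,h)$ need not lie in a fixed compact set: positive-frequency recurrence controls the cumulative integral $\int_0^t\Lambda$ but not the length of individual excursions, and the Hodge norm of $\widetilde C_T^\perp$ (and the $L^2$-norm of the primitive of the exact part) degenerate as the renormalized surface approaches the boundary of the stratum. I expect the resolution to be to renormalize not to the fixed time $\log T$ but to the last return $\sigma(T)\le\log T$ of $g_{\R^+}(M,h)$ to $K$, so that the renormalized loop has geometry controlled by constants depending only on $K$, to estimate the residual orbit segment of flat length $e^{-\sigma(T)}T$ on $g_{\sigma(T)}(M,h)\in K$ by a bootstrap of the very same inequality, and to absorb the resulting loss into the cumulative bound $\int_0^t\Lambda\le(1-\delta)t$, which is insensitive to the length of the gaps. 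In the Masur--Veech setting one additionally invokes Athreya's effective recurrence \cite{At06} to upgrade this to the stratum-uniform exponent $\alpha_\kappa$ of Theorem~\ref{thm:eff_erg}.
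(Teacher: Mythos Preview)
Your strategy is essentially the paper's, and your diagnosis of the obstacle together with its resolution is exactly right: the paper does \emph{not} renormalize to time $\log T$ but to a sequence $(t_n)$ of return times of $g_{\R^+}(M,h)$ to $K$, so that all geometric constants are controlled by $K$, and then handles arbitrary $T$ by an Ostrowski-type decomposition $T=\sum m_k T_k$ with $T_k=e^{t_k}$ (this is your ``bootstrap''). There are two organizational differences worth noting.

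First, the paper packages Steps~2--3 into a single statement, Lemma~\ref{lemma:spectral_gap_transfer}, a spectral gap for the transfer cocycle on the whole closed-current bundle $\mathcal Z^{-1}_\kappa$: the Hodge spectral gap on cohomology classes and the $L^2$-isometry on exact currents are combined, so one does not need to track the harmonic and exact parts separately. This avoids your $O(\log T)$ claim for $\Vert U_T\Vert_{L^2}$, which as written is not justified (the isometry alone gives no bound; what actually happens is that at each return time the closed loop has bounded $W^{-1}$-norm on $g_{t_n}(M,h)\in K$, hence bounded exact part there, and the isometry transports that bound back to $(M,h)$ unchanged).

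Second, and more substantively, your closing arc $I(\phi^X_T(p),p)$ is taken of bounded length on the \emph{original} surface $(M,h)$; under $g_t$ its vertical component is stretched by $e^t$, so the closed loop is not bounded on the renormalized surface and $\widetilde C_T^\perp$ is not controlled even when $g_{\log T}(M,h)\in K$. The paper instead closes on the \emph{renormalized} surface: $\gamma^X_{T_n}(p)$ is the first return to a vertical interval of unit length on $g_{t_n}(M,h)$, and the closing segment $I_n$ is the vertical sub-arc of that interval, hence of length $\le 1$ there (and $\le e^{-t_n}$ on $(M,h)$). With this choice the closed loop $\bar\gamma^X_{T_n}(p)$ has bounded $W^{-1}$-norm on $g_{t_n}(M,h)$, Lemma~\ref{lemma:spectral_gap_transfer} gives $\Vert(M,h,\bar\gamma^X_{T_n}(p))\Vert_{-1}\le C_K'' e^{(1-\alpha)t_n}$, and the Ostrowski sum finishes the proof.
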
 

The proof of Theorem~\ref{thm:eff_UE_typ} is analogous to that of Theorem~\ref{thm:eff_UE_selfsim} in 
section~\ref{subsec:selfsim}. It is based on the proof of a spectral gap result for a distributional  ``transfer cocycle'', which is 
derived from Lemma~\ref{lemma:spectral_gap} by de Rham theorem. 

\smallskip 
\noindent We give below the precise definition of the Sobolev bundle of currents and of the transfer cocycle.

\smallskip
\noindent  Let $(M,h)$ be a translation surface with horizontal and vertical vector fields $(X,Y)$. Let $W^1(M,h)$ denote the  Sobolev space of $1$-forms on $M$ endowed with the Sobolev norm induced by the flat metric of $(M,h)$. The Sobolev norm of the space 
 $W^1(M,h)$ is defined as follows: for every smooth $1$-form $\alpha$ on $M$
$$
\Vert  \alpha \Vert_{W^1(M,h)} := \left(  \Vert  \imath_ X \alpha \vert  \Vert^2_{L^2(M, \text{\rm area}_h)} + 
\Vert  \imath_ Y \alpha \vert  \Vert^2_{L^2(M, \text{\rm area}_h)} \right)^{1/2}\,.
$$
Let $W^{-1}(M,h)$ denotes the dual Sobolev space of currents of dimension $1$ (and degree $1$) on $M$ endowed with 
the Sobolev norm induced by the flat metric of $(M,h)$. 

\begin{definition}  The {\bf Sobolev bundle of $1$-currents}  $W^{-1}_\kappa$  over a stratum ${\mathcal H}^{(1)}_\kappa$ is the quotient of the Sobolev (Hilbert) bundle 
$$
  \bigcup_{(M,h) \in \hat{\mathcal H}^{(1)}_\kappa}  \{ \{(M,h)\}  \times   W^{-1} (M,h) \} \,.
$$
under the action of the mapping class group $\Gamma_g$ on ${\mathcal H}^{(1)}_\kappa$ and, for every $(M,h)\in {\mathcal H}^{(1)}_\kappa$,  on  the Sobolev space $W^{-1} (M,h)$ of currents by push-foward. The bundle $W^{-1}_\kappa$ is a Hilbert
bundle with norm
$$
\Vert  (M, h, \gamma) \Vert_{-1} :=  \Vert  \gamma \Vert_{W^{-1}(M,h)} \,, \quad \text{ for all } (M,h)\in {\mathcal H}^{(1)}_\kappa \text{ and }   \gamma \in W^{-1}(M,h)\,.
$$
The {\bf transfer cocycle} $g_t^{{(1)}}$ on  $W^{-1}_\kappa$ is defined as the quotient of the trivial cocycle
$$
g_t  \times \text{ \rm Id } :   (M,h) \times  W^{-1}(M,h)  \to  g_t(M,h) \times  W^{-1}(g_t(M,h))\,,
$$
under the action of the mapping class group. 
\end{definition} 

\noindent We now state the spectral gap result for the transfer cocycle (analogous of Claim~\ref{claim:spectrum_closed}) in section
\ref{subsec:selfsim}.  

\begin{lemma}  
\label{lemma:spectral_gap_transfer}
The  subbundle  $\mathcal Z^{-1}_\kappa \subset W^{-1}_\kappa$ of closed $1$-currents is invariant under the 
 transfer cocycle $g^{(1)}_\R$ and the restriction of $g^{(1)}_\R$ to $\mathcal Z^{-1}_\kappa$ has a spectral gap, in the following sense.
If the forward Teichm\"uller  orbit $g_\R (M,h)$ visits a compact set $K \subset  \mathcal H^{(1)}_\kappa$ with positive frequency,  in the sense that 
\begin{equation}
\label{eq:f_K}
f_K := \liminf_{t \to + \infty}  \text{ \rm Leb} ( \{  t \geq 0 \vert   g_t(M,h) \in K \} ) > 0\,,
\end{equation}
then,  there exist constants $C>0$ and $\alpha>0$ such that, for all $\gamma \in \mathcal Z^{-1}_\kappa (M,h)$,  such that 
$\<\gamma, \re (h)\>= \<\gamma, \im(h)\>=0$, and for all $t>0$, 
$$
 C^{-1}  e^{-(1-\alpha)t}  \Vert  (M, h, \gamma) \Vert_{-1}\leq   \Vert  g^{(1)}_t (M, h, \gamma) \Vert_{-1} \leq  C   e^{(1-\alpha)t}  \Vert (M, h, \gamma) \Vert_{-1} \,.
$$
\end{lemma}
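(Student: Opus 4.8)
The plan is to realise the transfer cocycle, via the de Rham theorem, as an extension of the Kontsevich--Zorich cocycle by an essentially isometric cocycle, and then to transport the spectral gap of Lemma~\ref{lemma:spectral_gap} through this extension, using the positive--frequency hypothesis to control the non-compact directions of the moduli space. First the algebraic set-up: by the de Rham theorem for currents there is a cocycle-equivariant short exact sequence of bundles
$$
0 \to \mathcal E^{-1}_\kappa \to \mathcal Z^{-1}_\kappa \to H^1_\kappa(M,\R) \to 0 ,
$$
in which $\mathcal E^{-1}_\kappa$ (exact $1$-currents) is $g^{(1)}_\R$-invariant, since the push-forward of an exact current is exact, and the induced cocycle on the quotient $H^1_\kappa(M,\R)$ is exactly $g^{\mathrm{KZ}}_\R$. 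Poincar\'e duality identifies the hypothesis $\langle\gamma,\re(h)\rangle=\langle\gamma,\im(h)\rangle=0$ on a closed current with the condition $[\gamma]\in\mathcal T(M,h)^\perp$, and $\mathcal T^\perp_\kappa$ is $g^{\mathrm{KZ}}_\R$-invariant, being the symplectic orthogonal of the invariant tautological plane. So it suffices to show that the restriction of $g^{\mathrm{KZ}}_\R$ to $\mathcal T^\perp$ and the restriction of $g^{(1)}_\R$ to $\mathcal E^{-1}$ both have two-sided growth exponent strictly less than $1$ along the recurrent orbit, and that this persists in the extension.

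For the exact subbundle: if $\gamma=dU\in\mathcal E^{-1}(M,h)$ with $U$ normalised by $\int_M U\,\mathrm{area}_h=0$, then, because the Teichm\"uller flow acts on the flat structure by the area-preserving affine maps $\mathrm{diag}(e^t,e^{-t})$, the flat area form and the function $U$ are unchanged by the cocycle, so $\|U\|_{L^2}$ is $g^{(1)}_\R$-invariant. Together with the universal bound $\|dU\|_{W^{-1}(M',h')}\le C_0\|U\|_{L^2(M',h')}$ (integration by parts against test $1$-forms), the open mapping estimate $\|U\|_{L^2(M',h')}\le C(M',h')\|dU\|_{W^{-1}(M',h')}$ for the de Rham complex of $(M',h')$ — whose constant is bounded on every compact subset of the stratum — and the crude conformal-distortion bound $\|\cdot\|_{W^{-1}(g_t(M,h))}\le e^{C|t|}\|\cdot\|_{W^{-1}(M,h)}$, positive recurrence to $K$ gives on $\mathcal E^{-1}$ a uniform upper bound and a lower bound of the form $C^{-1}e^{-(1-\alpha')t}$ with $\alpha'>0$ depending on $f_K$. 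For the cohomology part I would invoke Lemma~\ref{lemma:spectral_gap}: since $\mathcal T^\perp$ is $g^{\mathrm{KZ}}_\R$-invariant, Lemma~\ref{lemma:first_var} and the bound $|B_{(M,h)}(c)|\le\|(M,h,c)\|^2$ of Lemma~\ref{lemma:fund_form} give the two-sided differential inequality $\bigl|\tfrac{d}{dt}\log\|g^{\mathrm{KZ}}_t(M,h,c)\|\bigr|\le\Lambda(g_t(M,h))$ for $c\in\mathcal T^\perp$; continuity of $\Lambda$ with $\Lambda_K:=\max_K\Lambda<1$, plus the positive-frequency hypothesis, yields $\int_0^t\Lambda(g_s(M,h))\,ds\le(1-\alpha_1)t+O(1)$ with $\alpha_1:=(1-\Lambda_K)f_K>0$, hence $C^{-1}e^{-(1-\alpha_1)t}\|c\|\le\|g^{\mathrm{KZ}}_t(M,h,c)\|\le Ce^{(1-\alpha_1)t}\|c\|$ in the Hodge norm.

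To assemble these into the $W^{-1}$-estimate I would decompose $\gamma$, at each time $t$, by the Hodge decomposition of the conformal structure of $g_t(M,h)$: $\gamma=\omega^{(t)}_\gamma+dU^{(t)}_\gamma$ with $\omega^{(t)}_\gamma$ the harmonic representative of $[\gamma]$. Since $\omega^{(t)}_\gamma$ is closed and co-closed, the pairing $\langle\omega^{(t)}_\gamma,\beta\rangle$ depends on a test form $\beta$ only through its harmonic part and equals $[\gamma]\cup[\beta_{\mathrm{harm}}]$; this gives the bound $\|\omega^{(t)}_\gamma\|_{W^{-1}(g_t(M,h))}\le\|[\gamma]\|_{\mathrm{Hodge},\,g_t(M,h)}$, requiring no compactness and controlled by the previous paragraph. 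The delicate term is $dU^{(t)}_\gamma$: the Hodge splitting rotates, so $U^{(t)}_\gamma$ is not constant; one has $\tfrac{d}{dt}U^{(t)}_\gamma=-\phi_t$ with $d\phi_t=\dot\omega^{(t)}_\gamma$ and $\|\dot\omega^{(t)}_\gamma\|_{L^2(g_t)}\le C\|[\gamma]\|_{\mathrm{Hodge},\,g_t(M,h)}$ (again from Lemmas~\ref{lemma:first_var}--\ref{lemma:fund_form}), and, solving $d\phi_t=\dot\omega^{(t)}_\gamma$ by the open mapping theorem on $g_t(M,h)$ and integrating, one controls $\|U^{(t)}_\gamma\|_{L^2}$, hence $\|dU^{(t)}_\gamma\|_{W^{-1}(g_t)}\le C_0\|U^{(t)}_\gamma\|_{L^2}$, by $Ce^{(1-\alpha)t}\|\gamma\|_{W^{-1}(M,h)}$. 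Adding the two contributions gives the upper bound; the lower bound follows by running the same estimate for $g^{(1)}_{-t}$ along the reversed orbit $s\mapsto g_{t-s}(M,h)$, which coincides with the forward orbit of $(M,h)$ on $[0,t]$ and so, for $t$ large, still spends a definite proportion of its time in $K$ (for bounded $t$ the inequality is trivial after enlarging $C$). Here $C$ may depend on $(M,h)$ and $K$, and $\alpha$ on $f_K$ and $\Lambda_K$.

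The main obstacle is this last step, and precisely the control of $\|U^{(t)}_\gamma\|_{L^2}$ — equivalently, of the accumulated drift of the Hodge decomposition — during excursions of the Teichm\"uller orbit into the thin (cusp) part of the moduli space: there both the open mapping (Poincar\'e) constant of the flat metric and the function $\Lambda$ degenerate towards their extreme values, and only the crude exponent-$\le C$ distortion bound survives. The positive-frequency hypothesis is exactly what bounds the total length of these excursions, and, via a tent-shaped estimate on the excursion depths, their integrated contribution, so that the drift integral stays below $e^{(1-\alpha)t}$; making this quantitative, together with the analogous bookkeeping for the lower bound, is the technical heart of the argument and follows the analysis of \cite{AtF08} (the effective recurrence estimates of \cite{At06} furnish explicit control of excursion depths if one wants it).
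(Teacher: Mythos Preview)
Your strategy coincides with the paper's: the de Rham short exact sequence $0\to\mathcal E^{-1}_\kappa\to\mathcal Z^{-1}_\kappa\to H^1_\kappa(M,\R)\to0$, the observation that the $L^2$ norm of the primitive furnishes an \emph{invariant} equivalent norm on $\mathcal E^{-1}_\kappa$ (hence Lyapunov exponent $0$ there), and the spectral-gap Lemma~\ref{lemma:spectral_gap} on the cohomology quotient are exactly the three ingredients the paper's brief sketch lists, with an explicit pointer back to Claim~\ref{claim:spectrum_closed} for the model computation and forward to the proof of Theorem~\ref{thm:effective_criterion} for the generalisation.

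Your third and fourth paragraphs go further than the paper does here: the time-varying Hodge splitting $\gamma=\omega^{(t)}_\gamma+dU^{(t)}_\gamma$, the differential equation for $U^{(t)}_\gamma$, and the excursion bookkeeping are your own elaboration of what the paper leaves as a sketch. That is not a defect---the drift of the harmonic representative under the Teichm\"uller deformation is a genuine issue absent in the self-similar case (where the pseudo-Anosov preserves the conformal structure, so harmonic forms stay harmonic and no drift arises in Claim~\ref{claim:spectrum_closed}), and the paper's two-line argument does not spell out how it is absorbed. Your identification of excursion control as the crux, and the appeal to \cite{At06}/\cite{AtF08}, is consonant with how the paper handles the analogous point elsewhere.
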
  
\begin{proof} The argument is analogous to the one given in the proof of Claim~\ref{claim:spectrum_closed} and will generalized
in the proof of the effective Veech criterion (see Theorem \ref{thm:effective_criterion}) in the part on effective weak mixing. We briefly summarize  the main steps here.   

\noindent It is based on the de Rham theorem for currents, on the spectral gap Lemma~\ref{lemma:spectral_gap} for the cohomology bundle and on a direct proof that the (invariant) subbundle of exact currents carries the single Lyapunov exponent $0$ (with infinite multiplicity).

\noindent The latter statement in turn follows from the basic fact that exact currents in $W^{-1}(M,h)$ are exterior derivatives of square-integrable functions, and that the norm on $L^2(M,h)$ is invariant under the Teichm\"uller flow, in the sense that
$$
\Vert  u  \Vert_{L^2( g_t(M,h))} =  \Vert  u  \Vert_{L^2(M,h)}  \,, \text{ for all }  u\in L^2(M, h) \text{ and }  t\in \R\,,
$$
since the area form $\text{\rm area}_h$ is invariant under the Teichm\"uller  flow.  

\end{proof} 

\begin{proof} [Proof of Theorem~\ref{thm:eff_UE_typ}] 
We outline the argument which, as announced, is similar to the proof of Theorem~\ref{thm:eff_UE_selfsim}. However, the point
of view is different and geometric, as we prove below bounds on a given orbit segment under the Teichm\"uller deformation of the flat metric and of the related Sobolev norms, while in the proof of the self-similar case (Theorem~\ref{thm:eff_UE_selfsim}) we proved bounds on the push-forwards of a given orbit segment under the action of the iterates of a pseudo-Anosov map.

\smallskip
\noindent We again write ergodic integrals of the horizontal translation flow in terms of one-dimensional currents $\gamma^X_T(p) \in W^{-1}(M,h)$, as in formula \eqref{eq:erg_int}. It is therefore enough to prove bounds on the currents $\gamma^X_T(p)$ with respect to the dual Sobolev norm on the space $W^{-1} (M,h)$. 

\smallskip
\noindent We consider a sequence $(t_n)$ of return times of the forward Teichm\"uller orbit $g_{\R^+}$ to $K$ with positive frequency, that is, such that 
\begin{equation}
\label{eq:return_f} 
\lim_{n\to +\infty}  \,\frac{t_n}{n}  = f   >0 \,.
\end{equation}
Let $\gamma^X_{T_n}(p)$ be the first foward return orbit of the horizontal flow to a vertical interval centered at $p\in M$
of unit length on $g_{t_n} (M,h)$.  We note that a vertical interval of unit length on $g_{t_n} (M,p)$ has length 
$e^{-t_n}$ on $(M,h)$ and that $\gamma^X_{T_n}(p)$ is the also the first forward return orbit 
$\gamma^{X_{t_n}} _{e^{-t_n}T_n}(p)$ for the horizontal flow $\phi^{X_{t_n}}_\R$ of $g_{t_n} (M,h)$.  Since $g_{t_n} (M,h) \in K$, there exists a constant $C_K>0$ such that
$$
e^{t_n}/ C_K   \leq   T_n(p)   \leq  C_K  e^{t_n} \,.
$$
By a standard decomposition argument (a generalization of the so-called Ostrowski expansion of an irrational number), 
by the condition in formula~\ref{eq:return_f},  it is then possible to reduce estimates on ergodic integrals for arbitrary times to estimates for  the sequence of times $(T_n(p))$  defined above. We then estimate the Sobolev norm of the currents  $\gamma^X_{T_n}(p) \in W^{-1}(M,h)$. 

\smallskip
\noindent  By its definition as a return orbit to a vertical interval, the horizontal orbit segment $\gamma^X_{T_n}(p)$  can be closed by the union $\bar \gamma^X_{T_n}(p) := \gamma^X_{T_n}(p) \cup I_n$  with a vertical segment $I_n$ of uniformly bounded length on $g_{t_n}(M,h)$ connecting its endpoints.  Since $g_{t_n}(M,h)\in K$, by definition the loop $\bar \gamma^X_{T_n}(p)$ has bounded length on $g_{t_n}(M,h)$ and  there exists a constant $C'_K>0$ such that 
$$
\Vert  g^{(1)} _{t_n} ( M, h, \bar \gamma^X_{T_n}(p)) \Vert_{-1}  \leq  C'_K, \quad \text{ for all }n \in \N\,.
$$
Since $\bar \gamma^X_{T_n}(p)) \in \mathcal Z^{-1} ( g_{t_n} (M,h))$ is a closed current, by the spectral gap Lemma~\ref{lemma:spectral_gap_transfer} for the transfer cocycle, there exist $C''_K>0$ and  $\alpha >0$ such that we have 
$$
\Vert (M, h, \bar \gamma^X_{T_n}(p)) \Vert_{-1} \leq  C''_K   e^{ (1-\alpha) t_n}  \,, 
$$ 
Since the {\it vertical }interval $I_n$ has length at most $e^{-t_n}$ on $(M,h)$  (by construction it has at most unit  length on
$g_{t_n} (M,h)$), we conclude that 
$$
\Vert  \gamma^X_{T_n}(p) \Vert_{W^{-1}(M,h)} \leq   C''_K  T_n^{1-\alpha}\,,
$$
which completes the proof of the lemma (up to the Ostrowski-type decomposition result, for which we refer to 
\cite{F02}, Lemma 9.4). 
\end{proof}

\noindent Finally, the polynomial unique ergodicity Theorem \ref{thm:eff_erg} for directionally typical translation flows follows from Lemma~\ref{thm:eff_UE_typ} and from the result of J.~Athreya's \cite{At06}. In fact,  Athreya's results imply  that the 
positive frequency condition in formula~\eqref{eq:f_K} holds for Lebesgue almost all directions on {\it every }translation 
surface (that is, for the horizontal translation flow of $(M, e^{\imath \theta} h)$, for {\it every }translation surface $(M,h)$ 
and for Lebesgue  almost all $\theta \in \T$).

\section{(Effective) weak mixing: a twisted cohomology approach}

\noindent The cohomological approach to weak mixing of translation flows is based on a notion of twisted cohomology, which arises naturally if we want to attach cohomology classes to eigenfunctions, generalizing the flux cohomology 
class of an invariant function (or an invariant measure).  It  emphasizes a close analogy between (effective) unique
ergodicity and (effective) weak mixing, viewed as the (effective) unique ergodicity of twisted flows (the products with linear flows
on the circle). The relevant twisted cohomology has a well-Hodge decomposition and a Hodge norm, and it is possible to
compute once again the first variation of the Hodge norm, and derive a spectral gap result for the corresponding twisted cocycle.
Bounds on twisted integrals of translation flows can then be derived from the spectral gap of the twisted cocycle, in analogy
with the (untwisted) case of ergodic averages.

\subsection{Twisted cohomology} 
\label{subsec:Twisted_Cohom}

\noindent We associate to every eigenfunction of a translation flow a twisted cohomology class. 

\smallskip
\noindent Let $(X,Y)$ denote the horizontal and vertical flow of a translation surface $(M,h)$.  Let $u\in L^2(M, \text{\rm area}_h)$
be an eigenfunction of the horizontal translation flow $\phi^X_\R$ of eigenvalue $-2\pi i \lambda$ (with $\lambda \in \R$):
$$
u \circ \phi^X_t = e^{-2\pi i \lambda t} u\,,   \quad \text{ for all } t \in \R \,, \quad \text{ or } \quad   X u+2\pi i \lambda u =0 \,.
$$
The $1$-form $u \im(h)$ is closed for the twisted differential  
$$ 
d_{h, \lambda} =  d+ 2\pi \imath \lambda \re (h)  \wedge \,, 
$$
in fact, since $h$ is $d$-closed, 
$$
\begin{aligned}
d_{h, \lambda} ( u  \im (h)) & = d u \wedge  \im(h) +   2\pi \imath \lambda u  \re(h) \wedge \im (h) \\ &=  (Xu \cdot  \re (h) + Yu \cdot \im(h)) \wedge  \im(h) + 2\pi \imath \lambda u \cdot \re (h) \wedge \im(h) \\ &=  (Xu + 2\pi i \lambda u ) \text{\rm area}_h =0\,.
\end{aligned} 
$$
In the above calculation, all derivatives are taken in the weak $L^2$ sense. 

\begin{exercise} a) Prove that in the case $\lambda=0$,  square integrable functions with zero cohomology class are 
distributional derivatives of continuous invariant functions, hence they vanish  identically if the horizontal 
translation flow is minimal.  

\noindent b) Prove also that, for $\lambda \not =0$, square integrable functions with zero twisted 
cohomology class are  distributional derivatives of continuous invariant functions. 

\noindent c) Prove that non-trivial continuous 
eigenfunctions for the horizontal flow exist if and only if the translation surface is a (translation) cover of the circle 
$\T = \R/\Z$,   if and only if  it has a completely periodic directional foliation, with cylinders of commensurable heights.
\end{exercise} 

\noindent We generalize the above definition of twisted differential to arbitrary closed $1$-forms and we introduce the
corresponding twisted cohomology.

\begin{definition}
\label{def:twisted_diff_cohom} 
For any closed $1$-form $\eta$, the  {\bf twisted cohomology} $H^1_\eta(M, \mathbb C)$ is defined as the cohomology of the 
{\bf twisted differential } $$d_\eta= d+ 2\pi \imath \eta\wedge \,,$$
that is, the cohomology of the differential complex $(\Omega^\ast(M), d_\eta)$ of differential forms, which is defined as
as the quotient
$$
H^1_\eta (M, \C) :=  \frac{ \text{\rm Ker} (d_\eta: \Omega^1(M,\C) \to \Omega^2(M, \C))}{ \im(d_\eta: \Omega^0(M, \C) \to \Omega^1(M))   } \,.
$$
\end{definition} 
\noindent We note that the above formula gives a well-defined differential operator since for all smooth differential form $\alpha \in \Omega^\ast(M, \C)$ we have, since $\eta$ is a closed $1$ -form
$$
d^2_\eta \alpha=  (d+ 2\pi \imath \eta\wedge) \circ  (d+ 2\pi \imath \eta\wedge) \alpha = d^2 \alpha 
+ 2\pi \imath \eta\wedge d\alpha  - 2\pi  i \eta \wedge d\alpha = 0\,.
$$
It follows that $(\Omega^\ast(M), d_\eta)$ is a  well-defined differential complex and the twisted cohomology $H^1_\eta(M, \C)$ is well-defined.

\smallskip
\noindent The twisted differential $d_{h, \lambda}$ defined above is a special case of  the twisted differential $d_\eta$ in  Definition \ref {def:twisted_diff_cohom} when the closed $1$-form  $\eta =\lambda \re(h)$.

\begin{lemma} (see \cite{F22a}, Lemma 4.2) For every closed $1$-form $\eta$ on $M$, the twisted cohomology $H^1_\eta(M, \mathbb C)$ only depends,
up to a unitary (gauge) transformation, on the cohomology class $[\eta] \in H^1(M, \R)/ H^1(M, \Z)$. 
\end{lemma}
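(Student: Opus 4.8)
The plan is to exhibit, for any two closed $1$-forms $\eta,\eta'$ on $M$ with $[\eta']-[\eta]\in H^1(M,\Z)$, an explicit isomorphism of the differential complexes $(\Omega^\ast(M),d_{\eta'})\to(\Omega^\ast(M),d_\eta)$ implemented by multiplication by a smooth function of modulus one (a \emph{gauge transformation}). Such a map automatically descends to an isomorphism $H^1_{\eta'}(M,\C)\cong H^1_\eta(M,\C)$ which is unitary for the natural $L^2$ pairing on forms, and hence compatible with the Hodge norm on twisted cohomology introduced below. First I would treat the case $[\eta']=[\eta]$ in $H^1(M,\R)$, where by de Rham there is a single-valued smooth real function $h$ with $\eta'-\eta=dh$.

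For the gauge map $T_h(\alpha):=e^{2\pi\imath h}\alpha$ one computes, using that $\eta$ is closed,
\[
d_\eta(T_h\alpha)=d\big(e^{2\pi\imath h}\alpha\big)+2\pi\imath\,\eta\wedge e^{2\pi\imath h}\alpha
=e^{2\pi\imath h}\big(d\alpha+2\pi\imath(\eta+dh)\wedge\alpha\big)=T_h\big(d_{\eta'}\alpha\big),
\]
so that $d_\eta\circ T_h=T_h\circ d_{\eta'}$; since $T_h$ has inverse $T_{-h}$ it is an isomorphism of complexes, inducing the desired isomorphism on cohomology. As $|e^{2\pi\imath h}|\equiv1$, the operator $T_h$ preserves the $L^2$ inner product on $1$-forms and commutes with the Hodge star (multiplication by a scalar function being $C^\infty(M)$-linear on forms of fixed degree), so the induced map on $H^1_\eta$ is unitary.

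For the general case, if $[\eta']-[\eta]\in H^1(M,\Z)$ then $\beta:=\eta'-\eta$ is a closed $1$-form all of whose periods $\int_\gamma\beta$ over cycles $\gamma\in H_1(M,\Z)$ are integers. Integrating $\beta$ from a basepoint yields a function $h$ well-defined modulo $\Z$, so $g:=e^{2\pi\imath h}\colon M\to S^1$ is a well-defined smooth map with $g^{-1}dg=2\pi\imath\,\beta$. The same computation with $T_g(\alpha):=g\alpha$ gives
\[
d_\eta(T_g\alpha)=dg\wedge\alpha+g\,d\alpha+2\pi\imath\,\eta\wedge g\alpha
=g\big(d\alpha+2\pi\imath(\eta+\beta)\wedge\alpha\big)=T_g\big(d_{\eta'}\alpha\big),
\]
so $T_g$ is again a chain isomorphism with inverse $T_{\bar g}$, unitary because $|g|\equiv1$. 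This proves that $H^1_\eta(M,\C)$ depends, up to unitary gauge equivalence, only on $[\eta]$ modulo $H^1(M,\Z)$.

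The computations are routine; the only points requiring care are bookkeeping. First, one should record that the gauge function $g$ realizing a given shift $\beta$ is unique up to multiplication by a global constant in $H^0(M,S^1)=S^1$ (here $M$ is connected), which acts on twisted cohomology merely by an overall unit scalar, so the identification of twisted cohomologies is canonical up to this harmless ambiguity. Second, for the applications one wants the same statement for the $L^2$/Sobolev twisted complexes (since the eigenfunction $u$ attached to a twisted class lies only in $L^2$, not in $\Omega^\ast(M)$); the gauge transformation $T_g$, being multiplication by a bounded smooth unit-modulus function with smooth bounded inverse, preserves all the relevant Sobolev spaces and intertwines the corresponding twisted differentials verbatim, so the argument carries over without change. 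I expect no essential obstacle beyond keeping these regularity and normalization issues straight.
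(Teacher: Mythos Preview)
Your proof is correct and follows essentially the same approach as the paper: both construct the gauge transformation $\alpha\mapsto e^{2\pi\imath h}\alpha$ from a primitive $h$ (valued in $\T$) of $\eta'-\eta$ and verify directly that it intertwines $d_{\eta'}$ with $d_\eta$. Your case split (exact difference versus integral class) is unnecessary since the second case subsumes the first, and your additional remarks on uniqueness up to a global $S^1$ phase and on the Sobolev extension are correct and useful but go beyond what the paper records here.
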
 
\begin{proof} If $[\eta_1] = [\eta_2] \in  H^1(M, \mathbb R)/ H^1(M, \mathbb Z)$, then there exists $f :M \to \T$ such that 
$\eta_1-\eta_2 = df$, and, given any point $p \in M$ we can define
$$
f(x) = \int_{p}^x  \eta_1 -\eta_2 \,, \quad \text{ for all }  x\in M\,.
$$
The above definition is well-posed since the value of the integral does not depend modulo $\Z$ on the path of integration
between the points $p$ and $x \in M$.

\noindent We then define the ``gauge transformation'' as
$$
U_f= e^{2\pi \imath f}  : H^1_{\eta_1}(M, \mathbb C) \to H^1_{\eta_2} (M, \mathbb C)\,.
$$
In fact, since $f: M \to \mathbb T$ the function $e^{2\pi \imath f}$ is well-defined on $M$ with modulus equal to $1$
and, for all $1$-form $\alpha \in \Omega^1(M, \C)$ we have
$$
\begin{aligned}
d_{\eta_2} (U_f \alpha) &= d(e^{2\pi \imath f} \alpha) +  e^{2\pi \imath f}  \eta_2 \wedge \alpha  \\ & =
e^{2\pi \imath f}  ( d\alpha +  2\pi i df \wedge \alpha + \eta_2 \wedge \alpha ) \\ &
=e^{2\pi \imath f}  ( d\alpha +  2\pi i (\eta_1-\eta_2) \wedge \alpha + \eta_2 \wedge \alpha )  = U_f  (d_{\eta_1} \alpha) \,. 
\end{aligned} 
$$
Thus the transformation $U_f$ intertwines the twisted differentials $d_{\eta_1}$ and $d_{\eta_2}$ and therefore maps $H^1_{\eta_1}(M, \C)$ onto $H^1_{\eta_2}(M, \C)$.
\end{proof}

\noindent It follows in particular that for $[\eta] \in H^1(M, \Z)$ we have that $H^1_\eta(M, \C)\equiv H^1(M, \C)$, hence it
has dimension $2g$. 

\noindent For $[\eta] \not \in H^1(M, \Z)$, it can be proved that the twisted cohomology $H^1_\eta(M, \C)$ has dimension $2g-2$ (prove as an Exercise or see \cite{F22a}, Lemma 4.3).

\noindent  This dimension loss can be accounted for since $[\eta] \in H^1(M, \Z)$ if and only if $H^0_\eta(M, \C)$ is non-trivial, and
in fact it is one-dimensional (prove as an exercise or see \cite{F22a}, Lemma 4.1). 

\subsection{The twisted cocycle}   In Definition \ref{def:Hodge_bundles} we have introduced the real Hodge bundle $H^1_\kappa(M,\R)$ over a stratum $\mathcal H^{(1)}_\kappa$ of the moduli space of translation surfaces.  We introduce below
the toral Hodge bundle and the twisted cohomlogy bundle, on which  the key renormalization cocycles for weak mixing are defined.

\begin{definition}
The {\bf toral Hodge bundle}  $H^1_\kappa(M,\T)$  over $\mathcal H^{(1)}_\kappa$ is the quotient
$$
H^1_\kappa(M,\T):=  H^1_\kappa(M,\R)/ H^1_\kappa(M,\Z)
$$
of the real Hodge bundle over the sub-bundle with fibers given by the integral cohomology.
\smallskip
\noindent The {\bf twisted cohomology bundle} $\mathcal T^1_{\kappa} (M, \C)$ is the bundle over $H^1_\kappa(M,\T)$ with fibers given by the twisted cohomology spaces, defined as the quotient of the bundle
$$
\hat {\mathcal T} ^1_\kappa (M,\ C)  = \{ (h, \eta, c)   \vert   (h, \eta)  \in \hat {\mathcal H}^{(1)}_\kappa  \times H^1(M, \R)
\quad \text{and} \quad  c \in H^1_\eta(M,\C)  \}/ \Gamma_g $$
with respect to the action of the group $H^1(M, \Z)$ on $H^1(M,\R)$ by translations and on $H^1_\eta(M,\C)$  
by unitary transformations. 
\end{definition} 

\noindent Parallel transport yields cocycles over the Teichm\"uller flow $g_\R$ and, more generally, over the the $SL(2, \mathbb R)$ action on the moduli spaces of translation surfaces

\begin{definition}  The {\bf twisted cocycle}  (denoted as $g^\#_\R$) is the lift to the twisted cohomology bundle 
of the toral Kontsevich--Zorich cocycle (the projection of the Kontsevich--Zorich cocycle onto the toral Hodge bundle 
$H^1_\kappa(M, \T)$) by parallel transport: for all  $[(h, \eta, c)] \in \mathcal T^1_{\kappa} (M, \C)$ and for all $t \in \R$, we define
$$
g^\#_t [(M, h, \eta, c)]  =  [ (g_t(M, h), \eta,c)  ]  \,. 
$$
In the above formulas the symbol $[(h, \eta, c)]$  denotes the equivalence class of the triple $(h, \eta, c)$ such that
$ (h, \eta) \in \in \hat {\mathcal H}^{(1)} \times H^1(M, \R)$ and $c \in H^1_\eta(M, \C)$   with respect to the action of the mapping class group $\Gamma_g$  by pull-back  and to the action of the group $H^1(M,\Z)$ on the space $\{ (\eta, c) \vert  \eta \in H^1(M, \R) \text{ and } c\in H^1_\eta (M, \C)\}.$
\end{definition}

\subsection{The twisted Hodge norm and its first variation} 

The twisted cohomology, hence the twisted cohomology bundle can be endowed with a {\it Hodge norm} and a variational formula can be computed.   Any (closed) real $1$-form  $\eta$ on a translation surface $(M,h)$ (in fact, on any Riemann surface has a Hodge
decomposition into holomorphic and anti-holomorphic part, that is,  $\eta = \eta^{1,0} + \eta^{0,1}$, hence we have the decomposition
$$
d_\eta = d_\eta^{1,0}  + d_\eta^{0,1} =  d^{1,0} +   2\pi \imath \eta^{1,0}  +  d^{0,1} +   2\pi \imath \eta^{0,1}  
$$
Let $\mathcal H ^{1,0}_\eta(M,\C)$  and $  {\mathcal H}^{0,1} _\eta(M,\C)$  denote the space of twisted holomorphic, respectively anti-holomorphic,  differentials:
$$
\begin{aligned}
\mathcal H ^{1,0}_\eta(M,\C)&:= \{\alpha \in \Omega^{1,0} (M, \C) \vert   d_\eta^{1,0} \alpha =0\} \,, \\
\mathcal H ^{0,1}_\eta(M,\C)&:= \{\alpha \in \Omega^{0,1} (M, \C) \vert   d_\eta^{0,1} \alpha =0\} \,.
\end{aligned} 
$$
Since $\eta$ is a real form, and $h$ is of type $(1,0)$ (holomorphic)  there exists a smooth function $f_\eta: M \to \C$ such that 
$$
\eta^{1,0}  =  f_\eta h  \quad \text{ and }  \quad  \eta^{0,1}  =  \overline{f_\eta} \bar h\,,
$$
hence we have twisted Cauchy--Riemann operators
\begin{equation}
\label{eq:CR_op} 
\begin{aligned} 
{\partial}^+_\eta  = \partial^+ + 2 \pi \imath   \overline{ f_\eta} = (X+\imath Y)  + 2 \pi \imath \overline{ f_\eta} \,, \\ 
{\partial}^-_\eta  = \partial^- + 2 \pi \imath  f_\eta = (X-\imath Y)  + 2 \pi \imath  f_\eta\,.
\end{aligned} 
\end{equation}

\begin{exercise} Given a real $1$-form $\eta$,  prove that every  twisted-holomorphic (resp. anti-holomorphic) differential $\alpha$  (for the $1$-form $\eta$) can be written as  $\alpha = m^+  h$ (resp.$\alpha = m^-  \bar h$)  with $m^+$ (resp. $m^-$) a twisted-holomorphic (resp. anti-holomorphic)  function, that is, such that ${\partial}^+_\eta m^+=0$  (resp. ${\partial}^-_\eta m^- =0$).
\end{exercise} 

\noindent We note that, for any real closed $1$-form  $\eta \in \Omega^1(M, \R)$  the twisted cohomology space 
$H^1_\eta (M, \C)$ has no real subspace, hence for real closed $1$-form $\eta$ we defined the real twisted cohomology 
$$
H^1_\eta (M, \R) :=   \re( H^1_\eta (M, \R) \oplus  H^1_{-\eta}  (M, \R) )\,.
$$
It can be proved that any class $c \in H^1_\eta (M, \R) $ has a twisted holomorphic representative, that is,  there exists 
a  $1$-form $\alpha_\eta + \alpha_{-\eta} \in \mathcal H^{1,0}_\eta \oplus \mathcal H^{1,0}_{-\eta}$ such that 
$$
c= \re ( [ \alpha_\eta]  +  [ \alpha_{-\eta}] ) \in  H^1_\eta(M,\C) \oplus H^1_{-\eta}(M,\C)\,.
$$ 
\begin{definition} Let $\eta$ be a real closed $1$-form.  The {\bf twisted Hodge norm} of any  twisted cohomology class 
$c \in H^1_\eta(M, \C)$  is defined in terms of a $d_\eta$-closed representative $\alpha$ as follows:
$$
\Vert  c \Vert_{H^1_\eta (M, \C)} := \left( \frac{\imath}{2} \int_M  \alpha \wedge \bar \alpha \right)^{1/2}\,.
$$
The {\bf twisted Hodge norm} of  a  real twisted cohomology class $c \in H^1_\eta(M,\R)$  is defined in terms of a twisted holomorphic representative $\alpha_\eta + \alpha_{-\eta}$ as
$$
\Vert c  \Vert_{H^1_{\eta}(M,\R)} := \left( \Vert  \alpha_\eta \Vert^2 _{H^1_\eta (M, \C)} + \Vert  \alpha_{-\eta} \Vert^2_{H^1_{-\eta} (M, \C)} \right)^{1/2}\,.
$$
\end{definition} 
\noindent {\bf Notation}: For the Hodge norm on the real twisted cohomology bundle  we adopt the notation
$$
\Vert (M, h,\eta, c) \Vert =  \Vert c  \Vert_{H^1_{\eta}(M,\R)} \,, \quad \text{ for all } (h, \eta,c ) \in \mathcal T^1_\kappa(M, \C) \,.
$$
The following variational formula, analogous to the first variational formula in Lemma~\ref{lemma:first_var},  holds. 
The argument follows the original one in the proof of Lemma~5.2  of \cite{F22a} with additional details.

\begin{lemma} 
\label{lemma:first_var_twist} 
For all $(M, h, \eta) \in H^1_\kappa(M, \R)$ and $c \in H_\eta^1 (M, \R)$,  we have 
$$
\frac{d}{dt} \Vert   g^\#_t (h,\eta, c)  \Vert^2 \big\vert_{t=0}   = 4  \,\re \left(  \int_M  \left(\frac{\alpha_\eta}{h}\right)
 \left( \frac{\alpha_{-\eta}}{h} \right) \text{\rm area}_h   \right) :=  2 \,\re B_{(M,h,\eta)} (c)  \,.
$$
\end{lemma}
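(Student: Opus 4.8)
The plan is to follow the proof of Lemma~\ref{lemma:first_var} line by line, while carefully keeping track of the extra data carried by the two ``sides'' $H^1_\eta$ and $H^1_{-\eta}$ of the real twisted cohomology. First, since the twisted Hodge norm is equivariant under the mapping class group and under the gauge action of $H^1(M,\Z)$, it suffices to prove the identity for the product version of the twisted cocycle: fix a closed real $1$-form $\eta$ (with $[\eta]\notin H^1(M,\Z)$, the integral case being subsumed in Lemma~\ref{lemma:first_var}) and a real twisted class $c$, let $M_t$ be the Riemann surface underlying $g_t(M,h)$ and $h_t=e^{-t}\re(h)+\imath e^t\im(h)$, and for each $t$ write $c=\re([\alpha_{\eta,t}]+[\alpha_{-\eta,t}])$ with $\alpha_{\pm\eta,t}\in\mathcal H^{1,0}_{\pm\eta}(M_t)$. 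By the structure of twisted holomorphic differentials one has $\alpha_{\eta,t}=m^+_t h_t$ and $\alpha_{-\eta,t}=m^-_t h_t$ with $m^\pm_t$ twisted holomorphic functions. One checks, using the ellipticity of the twisted Cauchy--Riemann operators $\partial^\pm_\eta$ of \eqref{eq:CR_op} exactly as in ordinary Hodge theory, that $t\mapsto(\alpha_{\eta,t},\alpha_{-\eta,t})$ is differentiable; note that $d_\eta$ itself is independent of $t$, so each $\alpha_{\pm\eta,t}$, hence each $\dot\alpha_{\pm\eta,t}$, is $d_{\pm\eta}$-closed.

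Next I would differentiate $\Vert g^\#_t(h,\eta,c)\Vert^2=\tfrac{\imath}{2}\int_M\alpha_{\eta,t}\wedge\overline{\alpha_{\eta,t}}+\tfrac{\imath}{2}\int_M\alpha_{-\eta,t}\wedge\overline{\alpha_{-\eta,t}}$, obtaining $2\re\big(\tfrac{\imath}{2}\int_M\dot\alpha_{\eta,t}\wedge\overline{\alpha_{\eta,t}}\big)+2\re\big(\tfrac{\imath}{2}\int_M\dot\alpha_{-\eta,t}\wedge\overline{\alpha_{-\eta,t}}\big)$, each cross term being the conjugate of the one displayed. Two structural inputs replace the single observation ``$\re\dot\alpha_t$ is exact'' of the untwisted proof: (i) constancy of the de Rham class forces $\dot\alpha_{\eta,t}+\overline{\dot\alpha_{-\eta,t}}$ to be $d_\eta$-exact; (ii) a twisted integration by parts: if $\beta$ is $d_{-\eta}$-closed then $d_\eta G\wedge\beta=d(G\beta)$, whence $\int_M(d_\eta\text{-exact})\wedge(d_{-\eta}\text{-closed})=0$ by Stokes, and likewise with $\eta\mapsto-\eta$. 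Since $\overline{\alpha_{\eta,t}}$ is $d_{-\eta}$-closed, substituting $\dot\alpha_{\eta,t}=d_\eta G_t-\overline{\dot\alpha_{-\eta,t}}$ from (i) and invoking (ii) collapses $\tfrac{\imath}{2}\int_M\dot\alpha_{\eta,t}\wedge\overline{\alpha_{\eta,t}}$ to $-\tfrac{\imath}{2}\int_M\overline{\dot\alpha_{-\eta,t}}\wedge\overline{\alpha_{\eta,t}}$, and symmetrically the $-\eta$-term to $-\tfrac{\imath}{2}\int_M\overline{\dot\alpha_{\eta,t}}\wedge\overline{\alpha_{-\eta,t}}$.

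To finish I would substitute the explicit expressions. From $\dot h_t=-\overline{h_t}$ (immediate from the formula for $h_t$, exactly as in Lemma~\ref{lemma:first_var}) one gets $\dot\alpha_{\pm\eta,t}=\dot m^\pm_t h_t-m^\pm_t\overline{h_t}$; wedging against $\overline{\alpha_{\mp\eta,t}}=\overline{m^\mp_t}\,\overline{h_t}$, the $\overline{h_t}\wedge\overline{h_t}$ contributions drop, and $h_t\wedge\overline{h_t}=-2\imath\,\text{\rm area}_h$ turns each of the two collapsed quantities into $\overline{\int_M m^+_t m^-_t\,\text{\rm area}_h}$. Hence $\frac{d}{dt}\Vert g^\#_t(h,\eta,c)\Vert^2\big\vert_{t=0}=2\re\big(2\,\overline{\int_M m^+_0 m^-_0\,\text{\rm area}_h}\big)=4\re\int_M\big(\tfrac{\alpha_\eta}{h}\big)\big(\tfrac{\alpha_{-\eta}}{h}\big)\,\text{\rm area}_h$, which is the asserted formula.

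The hard part is the bookkeeping of the conjugation that interchanges $H^1_\eta$ and $H^1_{-\eta}$, and this is where the ``additional details'' over \cite{F22a} live: unlike the untwisted norm, where a single holomorphic class varies and only its real part is pinned down, here the reality constraint couples $\dot\alpha_{\eta,t}$ to $\overline{\dot\alpha_{-\eta,t}}$, so the twisted integration-by-parts identity must be brought to bear on the mixed pairing $H^1_\eta\times H^1_{-\eta}\to H^2(M,\C)$ at precisely the right step, and the signs together with the factor $4$ (against the factor $2$ of the untwisted formula) must be tracked with care. The only genuinely analytic ingredient beyond what the excerpt already provides is the smooth (real-analytic) dependence of the twisted holomorphic projection on the point of the stratum, which is a routine consequence of the ellipticity of the operators $\partial^\pm_\eta$.
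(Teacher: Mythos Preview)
Your argument is correct and in fact cleaner than the paper's own proof. You adapt the untwisted computation of Lemma~\ref{lemma:first_var} directly, via the two structural observations (i) that constancy of the real class forces $\dot\alpha_{\eta,t}+\overline{\dot\alpha_{-\eta,t}}$ to be $d_\eta$-exact, and (ii) the twisted Stokes identity $\int_M (d_\eta G)\wedge\beta=0$ for $\beta$ $d_{-\eta}$-closed. These two facts let you collapse the derivative immediately to the mixed term $\overline{\int_M m^+ m^-\,\text{area}_h}$, and the factor $4$ drops out.

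The paper instead performs a full twisted Hodge decomposition of the terms $\dot m_{\pm\eta,t}h_t$ and $m_{\pm\eta,t}\bar h_t$: each is written as a twisted-harmonic part $\alpha^{1,0}_{\pm\eta,t}$ (resp.\ $\alpha^{0,1}_{\pm\eta,t}$) plus a $d^{1,0}_{\pm\eta}$-exact (resp.\ $d^{0,1}_{\pm\eta}$-exact) correction. An elliptic argument (showing $\partial^+_{\pm\eta}\partial^-_{\pm\eta}w=0$ forces $w=0$ on the appropriate orthogonal complement) is then used to identify the exact corrections, and the relation $\alpha^{1,0}_{\pm\eta,t}=\overline{\alpha^{0,1}_{\mp\eta,t}}$ between harmonic pieces is extracted before integrating. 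Your route bypasses this entire decomposition: the Stokes identity (ii) kills the exact part without ever naming it, so you never need the elliptic lemma or the explicit matching of harmonic components. What the paper's longer approach buys is an explicit description of the harmonic projection of $\dot\alpha_{\pm\eta,t}$, which is not needed for the variational formula itself but is in the spirit of the more detailed Hodge-theoretic analysis of \cite{F22a}.
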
 
\begin{proof}
For $t\in \R$, let us denote $g^\#_t(M, h) = (M_t, h_t)$ and let 
\begin{equation}
\label{eq:first_var_twist_1}
c = \re [  m_{\eta, t}  h_t  +  m_{-\eta, t} h_t ] \,,  \quad \text{ with }  m_{\pm \eta, t}  
h_t \in \mathcal H^{1,0}_{\pm \eta} (M_t, \C)  \,.
\end{equation} 
By the definition of the twisted cocycle and of the Hodge norm, we have
$$
\Vert g^\#_t(M,h, \eta, c) \Vert^2 = \left( \int_M (\vert m_{\eta, t} \vert^2 +  \vert m_{-\eta, t} \vert^2 ) \text{ \rm area}_h  \right)\,,
$$
hence
\begin{equation}
\label{eq:first_var_twist_2}
\frac{d}{dt} \Vert g^\#_t(M,h, \eta, c) \Vert^2  = 2 \re \left( \int_M  \frac{d m_{\eta, t} }{dt} \cdot \overline{ m_{\eta, t} } 
+  \frac{d m_{-\eta, t} }{dt} \cdot \overline{ m_{-\eta, t} }  ) \text{ \rm area}_h  \right)\,.
\end{equation}
By differentiating in formula \eqref{eq:first_var_twist_1}, since the cohomology  class $c \in H^1_\eta(M,\C) \oplus H^1_{-\eta}(M,\C)$ does not depend on $t\in \R$,  since $dh_t/dt  = - \bar h_t$  by the definition of the Teichm\"uller flow (see formula \eqref{eq:h_t_der}) and by taking into account that $d_{-\eta} = \bar d_\eta$, 
there exists a unique smooth function  $w_{\eta,t}$ of zero average such that 
\begin{equation}
\label{eq:first_var_twist_3}
\re \big(  \frac{dm_{\eta, t}}{dt} h_t -   m_{\eta, t} \bar h_t     +  \frac{dm_{-\eta, t}}{dt} h_t -   m_{-\eta, t} \bar h_t  - d_\eta w_{\eta,t} \big)  \equiv 0\,.
\end{equation}
Now we observe that the $1$-forms $\frac{dm_{\pm \eta, t}}{dt} h_t$  are $d_{\pm \eta,t}^{1,0}$-closed,  and the $1$-form 
$m_{\pm \eta, t} \bar h_t $ are $d_{\pm \eta,t}^{0,1}$-closed, hence there exist $\alpha^{1,0}_{\pm \eta, t} \in \mathcal H^{1,0}_\eta(M_t, \C)$ and $\alpha^{0,1}_{\pm \eta, t} \in \mathcal H^{0,1}_\eta(M_t, \C)$, and  smooth functions $u_{\pm \eta,t}$, $v_{\pm \eta,t}$  such that  
$$
\frac{dm_{\pm \eta, t}}{dt} h_t  =  \alpha^{1,0}_{\pm \eta, t}  + d^{1,0}_{\pm \eta,t} u_{\pm \eta,t}  \quad \text{ and } \quad 
m_{\pm \eta, t} \bar h_t  =  \alpha^{0,1}_{\pm \eta, t}  + d^{0,1}_{\pm \eta,t} v_{\pm \eta,t} 
$$
The functions $u_{\pm \eta,t}$, $v_{\pm \eta,t}$  can be chosen so that 
$$u_{\pm \eta,t} \in  \text{\rm ker} (d^{1,0}_{\pm \eta,t} )^\perp = \text{\rm ker} (\partial^-_{\pm \eta,t} )^\perp  \quad \text{ and } \quad v_{\pm \eta,t}  \in \text{\rm ker} (d^{0,1}_{\pm \eta,t} )^\perp = \text{\rm ker} (\partial^+_{\pm \eta,t} )^\perp   \,.
$$
In addition, since $m_{\pm \eta, t} h_t$  are $d_{\pm \eta}$-closed, and the twisted differential does not depend on $t\in \R$, 
 it follows that their derivatives are $d_\eta$-closed, so that
$$
\begin{aligned} 
d_{\pm \eta} \big( \frac{dm_{\pm \eta, t}}{dt} h_t -   m_{\pm \eta, t} \bar h_t  \big)  &= 
d_{\pm \eta}\big(  \alpha^{1,0}_{\pm \eta, t}   -   \alpha^{0,1}_{\pm \eta, t}  +   d^{1,0}_{\pm \eta,t} u_{\pm \eta,t}  - d^{0,1}_{\pm \eta,t} v_{\pm \eta,t}   \big)  \\ &= d_{\pm \eta} ( d^{1,0}_{\pm \eta,t} u_{\pm \eta,t}  - d^{0,1}_{\pm \eta,t} v_{\pm \eta,t} ) =0 \,, 
\end{aligned} 
$$
from which we claim that we can derive that $u_{\pm \eta,t} + v_{\pm \eta,t}  \equiv 0$, hence 
$$
d^{1,0}_{\pm \eta,t} u_{\pm \eta,t}  - d^{0,1}_{\pm \eta,t} v_{\pm \eta,t}  = d_{\pm \eta} u_{\pm \eta,t} \,.
$$
In fact,  we prove below our claim that for all $(M,h)$, for all real harmonic form $\eta$ on $M$ and for  functions $w \in H^1(M)$ 
such that $w \in \bigl( (\text{\rm ker} (\partial^+_{\pm \eta} ) \cap \text{\rm ker} (\partial^-_{\pm \eta} )\bigr )^\perp$ , we have
$$
 d^{1,0}_{\pm \eta}  d^{0,1}_{\pm \eta} w = 0   \Longrightarrow    w=0 \,,
$$
The identity $ d^{1,0}_{\pm \eta}  d^{0,1}_{\pm \eta} w = 0 $ can be written  in terms of the twisted Cauchy-Riemann operators
(since the function $f_\eta$ in formula \eqref{eq:CR_op} is meromorphic) as 
$$
\partial^+_{\pm \eta} \partial^-_{\pm \eta}  w  = \partial^+_{\pm \eta} \partial^-_{\eta}  w=\partial^+ \partial^- w    +  2\pi \imath ( f_{\pm \eta}  \partial^+  + \overline{f_{\pm \eta}}  \partial^-)w - 4 \pi \vert f_{\pm \eta} \vert^2 w   =0\,.
$$
Since the adjoint $(\partial^\pm_{\pm \eta} )^\ast =- \partial^\mp_{\pm \eta}$  as densely defined closed operators on $L^2(M, \text{\rm area}_h)$ with domain the Sobolev space $H^1(M)$, it follows that
$$
\Vert \partial^\pm_{\pm \eta} w \Vert^2 = -  \<  \partial^\mp_{\pm \eta}  \partial^\pm_{\pm \eta} w, w\> =0\,,
$$
hence $w \in \text{\rm ker} (\partial^+_{\pm \eta} ) \cap \text{\rm ker} (\partial^-_{\pm \eta} )$ and, by the assumption that
it also belongs to its orthogonal, we conclude that $w=0$, as claimed.

\smallskip
\noindent  From formula~\eqref{eq:first_var_twist_3} we then conclude that $ \re ( d_{\pm \eta} (u_{\eta,t} + u_{-\eta,t}) - d_{\pm \eta} w_{\pm \eta,t}\equiv  0$ and 
$$
 \re \big(  \alpha^{1,0}_{ \eta, t}  + \alpha^{1,0}_{- \eta, t}    -   \alpha^{0,1}_{ \eta, t}  -   \alpha^{0,1}_{ -\eta, t}   \big) \equiv 0
$$
and from the above equation we derive
$$
\alpha^{1,0}_{ \pm \eta, t}  = \overline{ \alpha^{0,1}_{ \mp \eta, t}     }   \,.
$$
By taking into account that the cup $d^{1,0}_{\pm \eta, t} C^\infty(M)$ is Hodge orthogonal to 
$\mathcal H^{1,0}_{\pm \eta} (M_t, \C)$ and $d^{0,1}_{\pm \eta, t} C^\infty(M)$ is Hodge orthogonal to 
$\mathcal H^{0,1}_{\pm \eta}(M_t, \C)$  follows that
$$
\begin{aligned}
&\re \int_M \frac{dm_{\pm \eta,t }}{dt} \overline{ m_{\pm \eta, t} }  \cdot \text{ \rm area}_h =  
-\frac{1}{2} \im\int_M (\alpha^{1,0}_{\pm \eta, t}  + d^{1,0}_{\eta,t} u_{\pm \eta}) \wedge  \overline{ m_{\pm \eta, t} h_t}  \\ 
&=  -\frac{1}{2} \im \int_M (\alpha^{1,0}_{\pm \eta, t}  ) \wedge  \overline{ m_{\pm \eta, t} h_t } =  
\frac{1}{2} \im \int_M (\alpha^{0,1}_{\mp \eta, t}  ) \wedge  m_{\pm \eta, t} h_t  \\
&=\frac{1}{2} \im \int_M  (m_{\mp \eta, t} \bar h_t  -  d^{0,1}_{\eta,t} v_{\pm \eta} ) \wedge  m_{\pm \eta, t} h_t = 
\re \int_M  (m_{\mp \eta, t}  m_{\pm \eta, t})  \text{ \rm area}_h\,.
\end{aligned} 
$$
By formula \eqref{eq:first_var_twist_2} we conclude that
$$
\frac{d}{dt} \Vert g^\#_t(M,h, \eta, c) \Vert^2  = 4 \re  \int_M  (m_{\eta, t}  m_{- \eta, t})   \text{ \rm area}_h
$$
as stated, hence the argument is completed.
\end{proof} 

\noindent We now derive from the above variation formula a ``spectral gap'' result for the twisted cocycle. Strictly speaking, since
the twisted cocycle does not have ``tautological'' exponents ( the $\pm 1$ exponent in the untwisted case) we will prove
below an upper bound on the top exponent of the twisted cocycle. 

\smallskip
\noindent For all $(M,h) \in \mathcal H^{(1)}_\kappa$, all $\eta \in H^1(M,\R)$ , we let
\begin{equation}
\label{eq:Lambda_twist}
\Lambda^\# (M,h,\eta) :=  \max_{c\in H^1_\eta(M, \R)\setminus\{0\}} \frac{ \vert  B_{(M,h, \eta)}(c) \vert} { \Vert (M,h, \eta, c) \Vert^2}  \,.
\end{equation}

\noindent From Lemma~\ref{lemma:first_var_twist}  we immediately derive

\begin{lemma}
\label{lemma:spectral_gap_twist}
For all $(M,h, \eta) \in H^1_\kappa(M, \T)$, for all $c \in H^1_\eta(M,\R)$ and for all $t\in \R$, we have
$$
\Vert  g^\#_t (M, h, \eta, c) \Vert  \leq   \Vert (M, h, \eta, c) \Vert    \exp \left( \int_0^t  \Lambda^\# \big( g^{KZ}_s (M, h, \eta) \big)  ds \right)\,.
$$
\end{lemma}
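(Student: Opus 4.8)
The plan is to follow verbatim the structure of the proof of Lemma~\ref{lemma:spectral_gap}, replacing the untwisted first variation formula (Lemma~\ref{lemma:first_var}) by its twisted analogue (Lemma~\ref{lemma:first_var_twist}). Since the twisted Hodge norm is built intrinsically from the twisted Hodge decomposition of $(M,h,\eta)$ and is therefore $\Gamma_g$-equivariant, it suffices to argue for the trivial (product) cocycle $g_t\times\mathrm{Id}$ over the Teichm\"uller flow on the fiber $\{(\eta,c)\}$ of $\hat{\mathcal H}^{(1)}_\kappa$. Applying Lemma~\ref{lemma:first_var_twist} at the base point $g_s(M,h)$ to the transported class $g^\#_s(M,h,\eta,c)\in H^1_\eta(g_s(M,h),\C)$, and using the cocycle (semigroup) identity $g^\#_{s+\tau}=g^\#_\tau\circ g^\#_s$, I would upgrade the variation formula from $s=0$ to arbitrary $s\in\R$:
$$
\frac{d}{ds}\Vert g^\#_s(M,h,\eta,c)\Vert^2 = 2\,\re\,B_{g_s(M,h),\eta}\big(g^\#_s(M,h,\eta,c)\big).
$$

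Next, observe that the twisted cocycle is invertible on each fiber, so $\Vert g^\#_s(M,h,\eta,c)\Vert$ never vanishes when $c\neq 0$ (the asserted inequality being trivial when $c=0$); dividing the previous display by $\Vert g^\#_s(M,h,\eta,c)\Vert^2$ gives
$$
\frac{d}{ds}\log\Vert g^\#_s(M,h,\eta,c)\Vert = \frac{\re\,B_{g_s(M,h),\eta}\big(g^\#_s(M,h,\eta,c)\big)}{\Vert g^\#_s(M,h,\eta,c)\Vert^2}.
$$
By the elementary bound $\re z\leq|z|$ and the very definition of $\Lambda^\#$ in~\eqref{eq:Lambda_twist} --- a maximum over the unit sphere of the finite-dimensional fiber $H^1_\eta(g_s(M,h),\R)$, hence attained and finite --- the right-hand side is at most $\Lambda^\#\big(g^{KZ}_s(M,h,\eta)\big)$. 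Integrating this differential inequality from $0$ to $t$ and exponentiating yields precisely the asserted estimate.

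The substantive content here is entirely carried by Lemma~\ref{lemma:first_var_twist}; the only point in the present argument that deserves care is the passage from the $t=0$ variation formula to the orbit-wise formula, which I would justify exactly as above, via $\Gamma_g$-equivariance of the twisted Hodge norm together with the cocycle identity. I would also emphasize what is \emph{not} asserted: in contrast to the tautological-orthogonal untwisted case of Lemma~\ref{lemma:fund_form}, there is no a priori bound $\Lambda^\#<1$ at this level of generality, so the estimate above only becomes a genuine spectral gap once combined with a separate bound on $\Lambda^\#$ (for instance over compact subsets of the base, or when $[\eta]$ avoids the integral lattice), which is where the real work in the effective weak mixing argument lies.
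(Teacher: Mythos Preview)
Your proof is correct and follows exactly the approach the paper intends: the paper does not spell out a proof but simply says the lemma is ``immediately derived'' from Lemma~\ref{lemma:first_var_twist}, and your argument---differentiate $\log\Vert g^\#_s(M,h,\eta,c)\Vert$, bound the logarithmic derivative by $\Lambda^\#$ via its definition, and integrate---is precisely the twisted analogue of the paper's own proof of the untwisted Lemma~\ref{lemma:spectral_gap}. Your additional remarks on equivariance, the cocycle identity, and the caveat that $\Lambda^\#<1$ is not asserted here are all accurate and appropriate.
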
 

\noindent We prove below a key statement about the function $\Lambda^\#$ analogous to Lemma~\ref{lemma:fund_form} in section~\ref{subsec:Hodge_norm}. The main difference is that while for the function of Lemma~\ref{lemma:fund_form} we have 
$\Lambda<1$ on compact sets,  the identity $\Lambda^\#=1$ is verified on compact set on integral cohomology classes. 

\begin{lemma} (\cite{F22a}, Lemma 5.4) 
\label{lemma:Lambda_twist}
The function $\Lambda^\# $ is well-defined on the toral Hodge bundle $H^1_\kappa(M, \T)$, it satisfies 
the inequality  $0\leq \Lambda^\# \leq 1$ everywhere and 
$$
\Lambda^\# (M,h,\eta) =1  \Longrightarrow  [\eta]  \in H^1(M, \Z) \,.
$$
\end{lemma}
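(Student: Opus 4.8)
The plan is to establish the bounds $0\le\Lambda^\#\le 1$ by a Cauchy--Schwarz estimate on the twisted-holomorphic representatives of twisted cohomology classes, and then to extract from the equality case $\Lambda^\#=1$ a non-trivial $d_\eta$-parallel function, which by the characterisation of $H^0_\eta(M,\C)$ recalled in \S\ref{subsec:Twisted_Cohom} forces $[\eta]\in H^1(M,\Z)$.

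First I would fix $(M,h,\eta)$ and a non-zero $c\in H^1_\eta(M,\R)$, and write it through its unique twisted-holomorphic representatives $\alpha_{\pm\eta}=m_{\pm\eta}\,h\in\mathcal H^{1,0}_{\pm\eta}(M,\C)$, so that, exactly as in the computation of the twisted Hodge norm,
\[
\Vert(M,h,\eta,c)\Vert^2=\Vert m_\eta\Vert_{L^2}^2+\Vert m_{-\eta}\Vert_{L^2}^2,\qquad B_{(M,h,\eta)}(c)=2\int_M m_\eta\,m_{-\eta}\,\text{\rm area}_h,
\]
the $L^2$-norms being taken with respect to $\text{\rm area}_h$. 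Reading $\int_M m_\eta m_{-\eta}\,\text{\rm area}_h=\langle m_{-\eta},\overline{m_\eta}\rangle_{L^2}$ and applying Cauchy--Schwarz followed by $2ab\le a^2+b^2$ yields
\[
\vert B_{(M,h,\eta)}(c)\vert\le 2\,\Vert m_\eta\Vert_{L^2}\,\Vert m_{-\eta}\Vert_{L^2}\le\Vert m_\eta\Vert_{L^2}^2+\Vert m_{-\eta}\Vert_{L^2}^2=\Vert(M,h,\eta,c)\Vert^2,
\]
hence $\Lambda^\#\le 1$; the inequality $\Lambda^\#\ge 0$ is trivial. For well-definedness on the toral bundle I would invoke the gauge-equivariance lemma of \S\ref{subsec:Twisted_Cohom}: passing from $\eta$ to $\eta+df$ with $f:M\to\T$ conjugates the twisted complex by the multiplier $U_f=e^{2\pi\imath f}$, which is an $L^2$-isometry (being of modulus $1$) and respects the $(1,0)/(0,1)$ splitting, hence intertwines the forms $B_{(M,h,\cdot)}$ and leaves the ratio defining $\Lambda^\#$ unchanged.

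For the implication, assume $\Lambda^\#(M,h,\eta)=1$. The fibre $H^1_\eta(M,\R)$ is finite-dimensional (of dimension $2g$ or $2g-2$; if it is trivial then $[\eta]\notin H^1(M,\Z)$ and $\Lambda^\#\ne 1$, so there is nothing to prove), so the supremum in \eqref{eq:Lambda_twist} is attained at some $c\ne 0$; both $m_\eta$ and $m_{-\eta}$ are then non-zero, since otherwise $B_{(M,h,\eta)}(c)=0<\Vert(M,h,\eta,c)\Vert^2$. Equality in the Cauchy--Schwarz step forces $m_{-\eta}=\mu\,\overline{m_\eta}$ for some $\mu\in\C\setminus\{0\}$, and equality in $2ab\le a^2+b^2$ forces $\Vert m_\eta\Vert_{L^2}=\Vert m_{-\eta}\Vert_{L^2}$, hence $\vert\mu\vert=1$. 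Now I would convert this proportionality into a second Cauchy--Riemann equation: by construction $\partial^+_\eta m_\eta=0$, while from $\partial^+_{-\eta}m_{-\eta}=0$, together with $f_{-\eta}=-f_\eta$ (so $\partial^+_{-\eta}=\partial^+-2\pi\imath\,\overline{f_\eta}$), substituting $m_{-\eta}=\mu\overline{m_\eta}$ and cancelling $\mu$ gives $\partial^+\overline{m_\eta}-2\pi\imath\,\overline{f_\eta}\,\overline{m_\eta}=0$; taking complex conjugates and using $\overline{\partial^+ g}=\partial^-\bar g$ (valid since $X,Y$ are real vector fields) this reads $\partial^-_\eta m_\eta=0$. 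Thus $m_\eta$ is a non-zero function in $\ker\partial^+_\eta\cap\ker\partial^-_\eta=H^0_\eta(M,\C)$ --- recall that a function is $d_\eta$-parallel precisely when it is annihilated by both twisted Cauchy--Riemann operators, as used in the proof of Lemma~\ref{lemma:first_var_twist}. Hence $H^0_\eta(M,\C)\ne\{0\}$, and by the characterisation recalled in \S\ref{subsec:Twisted_Cohom} this forces $[\eta]\in H^1(M,\Z)$.

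The main obstacle is this last conversion: turning the scalar equality condition (Cauchy--Schwarz together with the arithmetic--geometric mean inequality) into the statement that the twisted-holomorphic representative $m_\eta$ is simultaneously $\partial^+_\eta$- and $\partial^-_\eta$-closed; once that is in place the conclusion is immediate from the already-recorded equivalence $H^0_\eta(M,\C)\ne\{0\}\iff[\eta]\in H^1(M,\Z)$. The upper bound $\Lambda^\#\le 1$ and well-definedness on the toral bundle are routine.
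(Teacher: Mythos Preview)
Your proof is correct and follows essentially the same route as the paper: the bound $\Lambda^\#\le 1$ via Cauchy--Schwarz (plus AM--GM), and the equality case yielding $m_{-\eta}\in\C\,\overline{m_\eta}$, from which one deduces $m_\eta\in\ker\partial^+_\eta\cap\ker\partial^-_\eta$ and hence $[\eta]\in H^1(M,\Z)$. You spell out the conversion step (from $\partial^+_{-\eta}m_{-\eta}=0$ to $\partial^-_\eta m_\eta=0$) and the toral well-definedness more explicitly than the paper does, but the argument is the same.
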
 
\begin{proof} The bound follows from the definition and from the Cauchy-Schwarz inequality.  The condition $\Lambda^\# (M,h,\eta) =1$ is equivalent to equality in the  Cauchy-Schwarz inequality.  It implies that there exist  functions $m_{\pm \eta} \in  \text{\rm ker} (\partial^+_{\pm \eta}) $ such that $m_\eta \in \C \overline{m_{-\eta}}$, hence 
$$m_\eta \in \text{\rm ker} (\partial^+_\eta) \cap  \text{\rm ker} (\partial^-_{\eta})\,.$$
It follows that 
$$
d m_\eta + 2\pi \imath m_\eta \eta = d_\eta ( m_\eta) =  (\partial^-_\eta m_\eta) h  + (\partial^+_\eta m_\eta) \bar h  =0\,,
$$
hence (see Exercise \ref{ex:periodic})  we have $\vert m_\eta \vert \in \C$ and, since $m_\eta \not =0$ (as $c \not =0$),
\begin{equation}
\eta =   \frac{1}{2\pi \imath}  \frac{dm_\eta}{m_\eta}  \in H^1(M, \Z)\,.
\end{equation} 
We have thus proved that $\Lambda^\# (M,h,\eta)=1$ implies $[\eta] \in H^1(M, \Z)$ as stated.
\end{proof} 

\begin{exercise} 
\label{ex:periodic}
Prove that the identity $d u + 2\pi \imath u \cdot \eta=0$  for a function $u\in L^2(M, \text{\rm area}_h)$ implies 
that $u \in C^\infty (M)$, as well as $\vert u \vert^2 \in \C$,  and then that,  if $u \not =0$, 
$$
\eta =   \frac{1}{2\pi \imath}  \frac{du}{u}  \Longrightarrow   \eta \in H^1(M, \Z)\,.
$$ 
\end{exercise} 

\subsection{ The Veech criterion revisited}
\label{ssec:Veech_crit}

\noindent We prove below a version of Veech's criterion~\cite{Ve84} for weak mixing of translation flows and
  state a corresponding effective version.  The argument is based on the first variation Lemma~\ref{lemma:first_var_twist} for 
  the   Hodge norm on the twisted cohomology bundle and has not appeared before. It is analogous to the cohomological 
 Hodge theory proof  of Masur's criterion (see Theorem~\ref{thm:Masur_criterion}).
  
  \smallskip
  \noindent Let $g^{KZ}_\R: H^1_\kappa(M, \T) \to H^1_\kappa(M, \T)$ denote the toral Kontsevich--Zorich cocycle
  over the stratum $\mathcal H^{(1)}_\kappa$ of the moduli space of translation surfaces.
  \begin{theorem}  \label{thm:criterion} Let $\lambda \in \R\setminus \{0\}$ be such that $-2\pi \imath \lambda$ is an eigenvalue for the horizontal translation flow $\phi^X_\R$ of a translation surface $(M,h) \in \mathcal H^{(1)}_\kappa$, with no continuous eigenfunction. Then we have the inclusion 
  $$
  \lim_{t\to +\infty} g^{KZ}_t (M, h, [\lambda \re(h)] )  \subset   [ H^1_\kappa(M, \Z) ] \subset H^1_\kappa (M, \T)\,,
  $$
 that is, every limit of the orbit $g^{KZ}_\R (M,h, [\lambda \re(h)] ) $ belongs to the set of integer 
points of the real Hodge bundle over the moduli space of translation surfaces.
  
  \end{theorem}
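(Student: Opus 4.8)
The plan is to mimic, in the twisted setting, the Hodge-theoretic proof of Masur's criterion (Theorem~\ref{thm:Masur_criterion}), using Lemma~\ref{lemma:first_var_twist}, Lemma~\ref{lemma:spectral_gap_twist} and Lemma~\ref{lemma:Lambda_twist} in the roles played there by Lemma~\ref{lemma:first_var}, Lemma~\ref{lemma:spectral_gap} and Lemma~\ref{lemma:fund_form}. We may assume $[\lambda \re(h)] \notin H^1(M, \Z)$, since otherwise the Kontsevich--Zorich cocycle preserves the integral lattice and the conclusion is immediate; put $\eta := \lambda \re(h)$. Let $u \in L^2(M)$, $u \neq 0$, be the given eigenfunction, $Xu + 2\pi\imath\lambda u = 0$. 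As computed in \S\ref{subsec:Twisted_Cohom} the $1$-form $u\,\im(h)$ is $d_\eta$-closed and $\bar u\,\im(h)$ is $d_{-\eta}$-closed, so together they define a real twisted class $c_u \in H^1_\eta(M,\R)$. The first thing I would prove is that $c_u \neq 0$: if $[u\,\im(h)] = 0$ in $H^1_\eta(M,\C)$, then, by the regularity of twisted primitives valid for $\lambda \neq 0$ (the exercise in \S\ref{subsec:Twisted_Cohom}), $u\,\im(h) = d_\eta w$ with $w$ continuous; expanding $d_\eta w = (Xw + 2\pi\imath\lambda w)\re(h) + (Yw)\,\im(h)$ gives $Xw + 2\pi\imath\lambda w = 0$ and $Yw = u \neq 0$, i.e. $w$ is a non-constant continuous eigenfunction of $\phi^X_\R$ of eigenvalue $-2\pi\imath\lambda$, contradicting the hypothesis.

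Next I would show that the twisted Hodge norm of $c_u$ decays exponentially along the Teichm\"uller orbit. Write $g_t(M,h) = (M_t, h_t)$ with $h_t = e^{-t}\re(h) + \imath e^t \im(h)$ and horizontal vector field $X_t = e^t X$. The same function $u$ is an eigenfunction of $\phi^{X_t}_\R$ of eigenvalue $-2\pi\imath e^t\lambda$, and its associated twisted $1$-form on $(M_t, h_t)$ is $u\,\im(h_t) = e^t\, u\,\im(h)$; since $e^t\lambda\,\re(h_t) = \eta$ the twisted differential is the same as on $(M,h)$, so $u\,\im(h_t)$ is a $d_\eta$-closed representative of $e^t$ times the transported class $g^\#_t(M,h,\eta,c_u)$. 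Because the area form is Teichm\"uller-invariant and the vertical $1$-form $\im(h_t)$ has pointwise flat norm $1$ on $(M_t,h_t)$, one has $\Vert u\,\im(h_t)\Vert_{L^2(M_t)} = \Vert u\Vert_{L^2(M)}$, so (doing the same for $\bar u$, and using that the twisted Hodge norm of a class is bounded by the $L^2$-norm of any $d_\eta$-closed representative)
\[
\Vert g^\#_t(M,h,\eta,c_u)\Vert \ \le\ \sqrt 2\, e^{-t}\,\Vert u\Vert_{L^2(M)}\, ,\qquad t \ge 0\, .
\]

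I would then combine this with the first-variation formula. By Lemma~\ref{lemma:first_var_twist} and the definition~\eqref{eq:Lambda_twist} of $\Lambda^\#$, running the computation of Lemma~\ref{lemma:spectral_gap_twist} but bounding $\re B \ge -|B| \ge -\Lambda^\#\,\Vert\cdot\Vert^2$ from below, one also gets the lower bound
\[
\Vert g^\#_t(M,h,\eta,c_u)\Vert \ \ge\ \Vert(M,h,\eta,c_u)\Vert\,\exp\!\Big(\!-\!\int_0^t \Lambda^\#\big(g^{KZ}_s(M,h,\eta)\big)\,ds\Big)\, .
\]
Comparing the two displays (here $c_u\neq0$ from the first paragraph is used, so that $\Vert(M,h,\eta,c_u)\Vert>0$) yields, for every $t>0$,
\[
\int_0^t \big(1 - \Lambda^\#(g^{KZ}_s(M,h,\eta))\big)\,ds \ \le\ \log\frac{\sqrt 2\,\Vert u\Vert_{L^2(M)}}{\Vert(M,h,\eta,c_u)\Vert}\, ,
\]
and since $1 - \Lambda^\# \ge 0$ by Lemma~\ref{lemma:Lambda_twist}, the integral $\int_0^{+\infty}(1 - \Lambda^\#(g^{KZ}_s(M,h,\eta)))\,ds$ converges.

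Finally, to conclude, let $(M_\infty, h_\infty, [\eta_\infty])$ be any accumulation point of the orbit $g^{KZ}_{\R^+}(M,h,[\eta])$ in the toral Hodge bundle, along times $t_n \to +\infty$. If $[\eta_\infty] \notin H^1(M_\infty, \Z)$, then $\Lambda^\#(M_\infty, h_\infty, [\eta_\infty]) < 1$ by Lemma~\ref{lemma:Lambda_twist}; since $[\eta_\infty] \neq 0$ in the torus fibre the twisted cohomology bundle has locally constant rank near the limit, so $\Lambda^\#$ is continuous there, and one finds $\epsilon > 0$, a neighbourhood $\mathcal U$ with $\Lambda^\# < 1 - \epsilon$ on $\mathcal U$, and, by continuity of the flow, a fixed $\rho > 0$ with $g^{KZ}_s(M,h,[\eta]) \in \mathcal U$ for all $s \in [t_n - \rho, t_n + \rho]$ and all large $n$; passing to a subsequence that makes these windows disjoint forces $\int_0^{+\infty}(1 - \Lambda^\#(g^{KZ}_s(M,h,\eta)))\,ds = +\infty$, contradicting the previous step. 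Hence $\Lambda^\#(M_\infty, h_\infty, [\eta_\infty]) = 1$, so $[\eta_\infty] \in H^1(M_\infty, \Z)$ by Lemma~\ref{lemma:Lambda_twist}. The main obstacle is the second step: one must correctly identify, on each surface $g_t(M,h)$ along the geodesic, the twisted $1$-form built from the (fixed) eigenfunction $u$, and extract from the Teichm\"uller-invariance of the area form the sharp exponential decay of its twisted Hodge norm; the regularity of twisted primitives needed for $c_u \neq 0$, and the continuity of $\Lambda^\#$ away from the integral points, are the two supporting technical points.
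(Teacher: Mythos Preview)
Your proof is correct and follows essentially the same route as the paper: establish non-vanishing of the twisted class $c_u$ (the paper's first lemma in \S\ref{ssec:Veech_crit}), prove the exponential decay $\Vert g^\#_t(M,h,\eta,c_u)\Vert \le C e^{-t}\Vert u\Vert_{L^2}$ (the paper's Lemma~\ref{lemma:exp_decay}), combine with the lower bound coming from the variational formula to get $\int_0^\infty (1-\Lambda^\#)\,ds < \infty$ (the paper's Lemma~\ref{lemma:bounded_lag}), and conclude by contraposition using $\Lambda^\# < 1$ off the integral lattice (exactly the paper's final argument). The only presentational difference is in your second step: where the paper computes the twisted-harmonic representative explicitly via the orthogonal decomposition $u = \partial^\pm_{h,\lambda} v^\pm + m^\mp$ (Lemma~\ref{lemma:Hodge_rep}) and then bounds $\Vert m^\pm\Vert \le \Vert u\Vert$, you invoke the general principle that the Hodge norm is dominated by the $L^2$ norm of any $d_\eta$-closed representative; this is the same orthogonality, just stated abstractly rather than written out.
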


\smallskip
\noindent The first step in the argument is given by the following lemma. For any $\lambda \in \C$, let $d_{h, \lambda}$ denote
the twisted differential for the closed $1$-form $\eta:=\lambda \re(h)$, that is,
$$
d_{h, \lambda} :=  d + 2\pi \imath \lambda \re(h) \wedge \,.
$$
and let $H^1_{h, \lambda}(M, \C)$ denote the corresponding twisted cohomology.

\begin{lemma} Let $u\in L^2(M, \text{\rm area}_h)$ be an eigenfunction for the horizontal flow with eigenvalue 
$-2 \pi \imath \lambda$.  Then $u\, \im (h)$ is  a $d_{h, \lambda}$-twisted $1$-form. In addition,
if the twisted class $[u \im(h)]_{h, \lambda} =0$, then $u= YU$ is the vertical derivative of a continuous 
eigenfunction $U\in H^1(M)$ of eigenvalue $-2 \pi \imath \lambda$.
\end{lemma}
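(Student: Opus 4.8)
The first assertion is the computation already displayed just before Definition~\ref{def:twisted_diff_cohom}, so I would simply recall it: expanding $du = (Xu)\re(h) + (Yu)\im(h)$ in the coframe dual to $(X,Y)$ and using $\re(h)\wedge\im(h) = \text{\rm area}_h$ gives $d_{h,\lambda}(u\,\im(h)) = (Xu + 2\pi\imath\lambda u)\,\text{\rm area}_h$, which vanishes in the weak $L^2$ sense since $u$ is an eigenfunction. Hence $u\,\im(h)$ is $d_{h,\lambda}$-closed.

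For the second assertion, assume the twisted class $[u\,\im(h)]_{h,\lambda}$ vanishes. The plan has three steps: (i) produce a distributional primitive; (ii) read off two scalar equations pinning it down; (iii) bootstrap its regularity. For (i), since the twisted cohomology can be computed with currents, the de Rham theorem for the twisted complex $(\Omega^\ast(M), d_{h,\lambda})$ gives a $0$-current $W$ on $M$ with $d_{h,\lambda}W = u\,\im(h)$. For (ii), expanding $d_{h,\lambda}W = (XW + 2\pi\imath\lambda W)\re(h) + (YW)\im(h)$ and matching the coefficients of $\re(h)$ and $\im(h)$ yields
\begin{equation*}
XW + 2\pi\imath\lambda W = 0 \qquad\text{and}\qquad YW = u,
\end{equation*}
so $W$ is already a distributional eigenfunction of eigenvalue $-2\pi\imath\lambda$ whose vertical derivative is $u$.

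For (iii) I would proceed in two stages. Since $\vert u\vert$ is invariant under $\phi^X_\R$ it is almost everywhere constant, so $u\in L^\infty(M)$ and the $1$-form $u\,\im(h)$ lies in $L^2(M,\text{\rm area}_h)$; elliptic regularity for the twisted Laplacian $d_{h,\lambda}^\ast d_{h,\lambda}$ on the flat surface $(M,h)$ then promotes $W$ to an element of $H^1(M)$, and the two displayed identities become identities of $L^2$ functions. To upgrade from $H^1$ to continuity I would use the two foliations directly: $XW = -2\pi\imath\lambda W$ forces $W\circ\phi^X_t = e^{-2\pi\imath\lambda t}W$, so $W$ is smooth along horizontal leaves with constant modulus, while $YW = u\in L^\infty$ forces $W$ to be Lipschitz along vertical leaves; mollifying $W$ over small $(X,Y)$-flow boxes and feeding in these two leafwise facts should produce a continuous representative $U$ of $W$, by the same Fubini-type mechanism that makes an invariant current with atomless transverse measure a continuous function in the proof of Lemma~\ref{lemma:inj}. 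Then $U\in H^1(M)$ is a continuous eigenfunction of eigenvalue $-2\pi\imath\lambda$ with $u = YU$, as claimed.

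\textbf{The hard part} is step (iii): it must be carried out on a surface with conical singularities, where $X$, $Y$ and the twisted Laplacian degenerate at the zeros of $h$, so both the elliptic estimate and the mollification have to be set up in suitable weighted Sobolev spaces, and one must make rigorous---starting only from a distributional $W$---the passage from leafwise regularity to joint continuity. Steps (i), (ii) and the first assertion are, by contrast, routine manipulations of differential forms.
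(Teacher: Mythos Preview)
Your skeleton matches the paper's proof: produce a primitive $U$ with $d_{h,\lambda}U = u\,\im(h)$, read off the contracted equations $XU + 2\pi\imath\lambda U = 0$ and $YU = u$, then upgrade regularity. The first assertion and your steps (i)--(ii) are exactly what the paper does.

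Where you diverge is in step (iii), and you make it harder than necessary, introducing a small gap along the way. The claim that $\vert u\vert$ is almost everywhere constant presupposes ergodicity of $\phi^X_\R$, which is not among the hypotheses. You invoke $u\in L^\infty$ twice: once to put $u\,\im(h)$ into $L^2$ --- but this is automatic from $u\in L^2$ --- and once to get Lipschitz regularity of $W$ along vertical leaves, which is where the gap actually bites. The paper avoids both detours. For the $H^1$ step it simply notes that $d_{h,\lambda}$ is a zeroth-order perturbation of the (overdetermined) elliptic operator $d$, so $d_{h,\lambda}U\in L^2$ forces $U\in H^1(M)$ directly; no twisted Laplacian is needed. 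For continuity it uses the Sobolev trace theorem along vertical trajectories: once $U\in H^1(M)$ and $XU = -2\pi\imath\lambda U$, the function $U$ is smooth along horizontal leaves, and in local flat coordinates this equation separates variables as $U(x,y)=e^{-2\pi\imath\lambda x}V(y)$ with $V\in H^1$ in one dimension, hence $V\in C^{1/2}$ by the one-dimensional Sobolev embedding. This also dissolves your ``hard part'': the argument is local on $M\setminus\Sigma_h$ and needs nothing beyond standard $H^1$ theory, so the cone points never enter.
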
 

\begin{proof}  Let $\{X,Y\}$ denote the generator of the horizontal  and vertical flows, so that, for all $f\in C^\infty(M)$, 
$$
d f = Xf \,\re(h) + Yf\, \im (h) \,.
$$
By assumption we have (in the weak sense)
$$
(X + 2\pi \imath  \lambda) u=0 
$$
As we have mentioned at the beginning of section~\ref{subsec:Twisted_Cohom}, by a calculation (in the weak sense) we derive
$$
d_{h, \lambda} (u \im(h)) = d (u \im(h)) + 2\pi \imath  \lambda u\, \re(h) \wedge \im(h) = (Xu + 2\pi \imath \lambda u)   
 \text{\rm area}_h = 0\,.
$$
Let us now assume that the twisted cohomology class $[u \im(h)]_{h, \lambda} =0 \in H^1_{h,\lambda}(M,\C)$. There exists a function $U\in L^2(M, \text{\rm area}_h)$ such that
$$
d U+ 2\pi \imath \lambda  U \re(h) = u \im(h)\,.
$$
Since the exterior derivative is an elliptic operator, it follows that $U \in H^1(M)$ and by contraction we have
$$
XU + 2\pi \imath \lambda  U = 0   \quad \text{ and }  \quad YU = u\,.
$$
It follows that $U$ is also smooth in the horizontal direction, hence it is continuous (in fact, it is H\"older) by the
Sobolev trace theorem along vertical trajectories. 
\end{proof} 

\begin{exercise}  Prove that every  function $U\in H^1(M)$ such that $(X + 2\pi \imath \lambda)  U = 0$  is of  H\"older class $1/2$ on the complement of the set $\Sigma_h:= \{h=0\}$.
\end{exercise}

\noindent Let $\partial^\pm_{h, \lambda}$ denote the twisted Cauchy-Riemann operators:
$$
\partial^\pm_{h, \lambda} =   \partial^\pm_h  +2\pi \imath \lambda = (X\pm \imath Y) + 2\pi \imath \lambda\,.
$$
By \cite{F22a}, Prop. 3.2,  the maximal closed extension of the operator $\partial^{\pm}_{h, \lambda}$ is the operator 
$- (\partial^{\mp}_{h, \lambda})^\ast$ and there exist orthogonal  decompositions
$$
L^2(M, \text{\rm area}_h) =   \text{Ran} (\partial ^\pm_{h, \lambda}) \oplus    \text{ Ker } ( (\partial ^\pm_{h, \lambda})^\ast    )
 =   \text{Ran} (\partial ^\pm_{h, \lambda}) \oplus    \text{ Ker } (\partial ^\mp_{h, \lambda}  )\,.
$$
For every $u \in L^2(M, \text{\rm area}_h)$  there exist functions $v^\pm \in \text{ Ker } (\partial ^\mp_{h, \lambda})^\perp \cap H^{1}(M)$ and functions $m^\pm \in  \text{ Ker } ( (\partial ^\pm_{h, \lambda})^\ast) = \text{ Ker } (\partial ^\mp_{h, \lambda})\subset L^2(M, \text{\rm area}_h)$ such that 
we have a decomposition
\begin{equation}
\label{eq:u_dec}
u = \partial^+_{h, \lambda} v^+ + m^- =  \partial^-_{h, \lambda} v^- + m^+\,.
\end{equation}
\begin{lemma}  
\label{lemma:Hodge_rep}
Let $u\in L^2(M, \text{\rm area}_h)$ be an eigenfunction for the horizontal flow with eigenvalue 
$-2 \pi \imath \lambda$. Let us assume that $\lambda \re(h) \not\in H^1(M,\Z)$. 
Then we have the following identity between twisted cohomology classes:
$$
[u \im(h) ]_{h, \lambda} = \frac{1}{2\imath} [ m^+  h - m^- \bar h  ]_{h, \lambda}  \in H^1_{h, \lambda} (M, \C)\,.
$$
\end{lemma}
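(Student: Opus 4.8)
The plan is to compute the twisted‑holomorphic‑plus‑antiholomorphic representative of the twisted‑closed $1$‑form $u\,\im(h)$ by feeding the orthogonal decompositions \eqref{eq:u_dec} of $u$ directly into $u\,\im(h)$ and stripping off $d_{h,\lambda}$‑exact pieces. First I would record two elementary facts: the rewriting $\im(h)=\tfrac{1}{2\imath}(h-\bar h)$, so that $u\,\im(h)=\tfrac{1}{2\imath}(u\,h-u\,\bar h)$; and, from $dw=\tfrac12(\partial^-_h w)\,h+\tfrac12(\partial^+_h w)\,\bar h$ together with $\re(h)=\tfrac12(h+\bar h)$, the identity
\[
d_{h,\lambda}w=\tfrac12(\partial^-_{h,\lambda}w)\,h+\tfrac12(\partial^+_{h,\lambda}w)\,\bar h,\qquad w\in H^1(M),
\]
which implies that a $1$‑form $a\,h+b\,\bar h$ is $d_{h,\lambda}$‑closed if and only if $\partial^+_{h,\lambda}a=\partial^-_{h,\lambda}b$. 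Since in \eqref{eq:u_dec} the summand $m^-$ is orthogonal to $\operatorname{Ran}(\partial^+_{h,\lambda})$ and $m^+$ is orthogonal to $\operatorname{Ran}(\partial^-_{h,\lambda})$, and $(\partial^\pm_{h,\lambda})^\ast=-\partial^\mp_{h,\lambda}$, one gets $\partial^-_{h,\lambda}m^-=0$ and $\partial^+_{h,\lambda}m^+=0$; in particular both $m^+h$ and $m^-\bar h$ are $d_{h,\lambda}$‑closed. Throughout, all derivatives are taken in the weak $L^2$ sense.

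Next I would substitute $u=\partial^-_{h,\lambda}v^-+m^+$ into $u\,h$ and $u=\partial^+_{h,\lambda}v^++m^-$ into $u\,\bar h$, and then use the displayed formula for $d_{h,\lambda}w$ applied to $v^\pm$ to replace $(\partial^-_{h,\lambda}v^-)\,h$ and $(\partial^+_{h,\lambda}v^+)\,\bar h$ by $d_{h,\lambda}$‑exact forms at the cost of the cross‑terms $(\partial^+_{h,\lambda}v^-)\bar h$ and $(\partial^-_{h,\lambda}v^+)h$. Collecting, one finds the identity of twisted cohomology classes
\[
2\imath\,[\,u\,\im(h)\,]_{h,\lambda}=[\,m^+h-m^-\bar h\,]_{h,\lambda}+[\omega]_{h,\lambda},\qquad \omega:=(\partial^-_{h,\lambda}v^+)\,h-(\partial^+_{h,\lambda}v^-)\,\bar h .
\]
Thus the lemma is reduced to showing that $\omega$ is $d_{h,\lambda}$‑exact.

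The heart of the argument — and the only step I do not expect to be purely formal — is the claim that $v^++v^-\in\Ker(\partial^+_{h,\lambda})$; granting it, $\partial^+_{h,\lambda}v^-=-\partial^+_{h,\lambda}v^+$, hence $\omega=(\partial^-_{h,\lambda}v^+)\,h+(\partial^+_{h,\lambda}v^+)\,\bar h=2\,d_{h,\lambda}v^+$ is exact and we are done. To prove the claim, apply $\partial^-_{h,\lambda}$ to $u=\partial^+_{h,\lambda}v^++m^-$ (using $\partial^-_{h,\lambda}m^-=0$) and $\partial^+_{h,\lambda}$ to $u=\partial^-_{h,\lambda}v^-+m^+$ (using $\partial^+_{h,\lambda}m^+=0$), obtaining $\partial^-_{h,\lambda}\partial^+_{h,\lambda}v^+=\partial^-_{h,\lambda}u$ and $\partial^+_{h,\lambda}\partial^-_{h,\lambda}v^-=\partial^+_{h,\lambda}u$. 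Because $[X,Y]=0$ on a translation surface, $\partial^+_{h,\lambda}$ and $\partial^-_{h,\lambda}$ commute, and the eigenfunction relation $Xu+2\pi\imath\lambda u=0$ reads $\partial^+_{h,\lambda}u=-\partial^-_{h,\lambda}u$; therefore $\partial^-_{h,\lambda}\partial^+_{h,\lambda}(v^++v^-)=\partial^-_{h,\lambda}u+\partial^+_{h,\lambda}u=0$, i.e. $\partial^+_{h,\lambda}(v^++v^-)\in\Ker(\partial^-_{h,\lambda})$. On the other hand $\partial^+_{h,\lambda}(v^++v^-)$ lies in $\operatorname{Ran}(\partial^+_{h,\lambda})$, which — by the orthogonal closed‑range decomposition underlying \eqref{eq:u_dec}, available since $\lambda\re(h)\notin H^1(M,\Z)$ (\cite{F22a}, Prop.~3.2) — is orthogonal to $\Ker((\partial^+_{h,\lambda})^\ast)=\Ker(\partial^-_{h,\lambda})$. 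Hence $\partial^+_{h,\lambda}(v^++v^-)=0$, which finishes the reduction; the remaining points (that $v^\pm\in H^1(M)$ makes all intermediate $1$‑forms have $L^2$ coefficients, and that the adjoint identities are exactly those recalled before the statement) are routine bookkeeping. Substituting $[\omega]_{h,\lambda}=0$ into the displayed identity yields $[\,u\,\im(h)\,]_{h,\lambda}=\tfrac{1}{2\imath}[\,m^+h-m^-\bar h\,]_{h,\lambda}$.
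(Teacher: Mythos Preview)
Your proof is correct and follows essentially the same route as the paper's: write $u\,\im(h)=\tfrac{1}{2\imath}\bigl[(\partial^-_{h,\lambda}v^-)h-(\partial^+_{h,\lambda}v^+)\bar h\bigr]+\tfrac{1}{2\imath}(m^+h-m^-\bar h)$, then show the residual is $d_{h,\lambda}$-exact by proving $\partial^-_{h,\lambda}\partial^+_{h,\lambda}(v^++v^-)=0$ and invoking the orthogonal closed-range decomposition. The only cosmetic differences are that you obtain the key identity by differentiating the decompositions of $u$ and using the eigenfunction relation $\partial^+_{h,\lambda}u=-\partial^-_{h,\lambda}u$ directly (the paper instead reads it off from the $d_{h,\lambda}$-closedness of the residual form), and that you stop at $\partial^+_{h,\lambda}(v^++v^-)=0$, which already suffices, whereas the paper pushes one step further to $v^++v^-=0$ via $\text{Ker}(\partial^+_{h,\lambda})\cap\text{Ker}(\partial^-_{h,\lambda})=\{0\}$; this last step is where the hypothesis $\lambda\,\re(h)\notin H^1(M,\Z)$ actually enters in the paper, so your slightly weaker intermediate claim in fact makes the argument independent of that hypothesis (your parenthetical attribution of the closed-range decomposition to this hypothesis is unnecessary, since the decomposition holds for all $\lambda$).
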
 
\begin{proof} Since $m^\pm \in   \text{ Ker } (\partial ^\mp_{h, \lambda})$, it follows that the $1$-forms 
  $m^+  h$ and $m^- \bar h$  are twisted closed. In fact,
$$
\begin{aligned}
d (m^+ h) + (2\pi \imath  \lambda m^+) \re(h)  \wedge h  = \frac{1}{2} (\partial_h^+ m^+ +2\pi \imath \lambda m^+) 
\bar h \wedge h =0 \,, \\
d (m^- \bar h) + (2\pi \imath \lambda m^-)  \re(h)  \wedge \bar h  = \frac{1}{2} (\partial_h^- m^- 
+2\pi \imath \lambda)   h \wedge \bar h =0 
\end{aligned}
$$
We have the identity
$$
u \im(h) = u \frac{ h - \bar h}{2\imath} = \frac{ (\partial^-_{h, \lambda} v^-) h  -  (\partial^+_{h, \lambda} v^+)  \bar h}{2\imath}     + \frac{m^+ h  -  m^- \bar h}{2\imath} \,.
$$
It follows that the $1$-form $(\partial^+_{h, \lambda} v^+) h  -  (\partial^-_{h, \lambda} v^-)  \bar h$ is $d_{h, \lambda}$-closed, which implies that the $1$-forms  $\partial^+_{h, \lambda} (v^+ +v^- ) h$ is $d_{h, \lambda}$-closed, hence $\partial^-_{h, \lambda} \partial^+_{h, \lambda} (v^+ +v^- ) =0$, or 
$$
\partial^-_{h, \lambda} (v^+ +v^- ) = \partial^+_{h, \lambda}  (v^+ +v^- ) =0\,.
$$
Thus the function $v^+ +v^- \in  \text{ Ker } (\partial ^+_{h, \lambda}) \cap  \text{ Ker } (\partial ^+_{h, \lambda})$ and by assumption
 $$
 v^+ +v^- \in  \left( \text{ Ker } (\partial ^+_{h, \lambda}) \cap  \text{ Ker } (\partial ^+_{h, \lambda}) \right)^\perp
 $$
hence $v^+ +v^-  =0$, and we have
$$
u \im(h) = \frac{1 }{2\imath}  d_{h, \lambda} v^+     + \frac{m^+ h  -  m^- \bar h}{2\imath}  \,.
$$
as stated. The argument is therefore complete.
\end{proof} 

\noindent We then consider the orbit of the twisted cohomology class $[u \im(h)]_{h, \lambda}$ under the  the twisted
cocycle and prove that it decays exponentially (with respect to the Hodge norm) in forward time. 

\begin{lemma} \label{lemma:exp_decay}  Let $u\in L^2(M, \text{\rm area}_h)$ be an eigenfunction for the horizontal flow with eigenvalue $-2 \pi \imath \lambda$.  For all $t\in \R$ we have
$$
\Vert g^\#_t (M, h, [\lambda \re(h)],  [u \im(h)] ) \Vert   \leq   \Vert  u  \Vert_{L^2(M, \text{\rm area}_h)}    e^{-t}\,.
$$
\end{lemma}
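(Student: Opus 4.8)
The plan is to estimate the twisted Hodge norm of the class $[u\,\im(h)]_{h,\lambda}$ directly on the Teichm\"uller-deformed surface, rather than through the spectral-gap inequality of Lemma~\ref{lemma:spectral_gap_twist}, which (since $\Lambda^\#\le 1$) would only give the useless bound $e^{t}$. There are two inputs. First, the twisted cocycle $g^\#_\R$ is parallel transport for the flat Gauss--Manin connection, so it carries a \emph{closed} representative of a twisted cohomology class to the \emph{same} differential form on the deformed Riemann surface, changing only the complex (flat) structure that is used to measure the Hodge norm. Second, the twisted Hodge norm of a class, being the norm of the twisted-harmonic representative, is bounded above by the $L^2$-norm (induced by the flat metric, so normalized so that $\im(h)$ has pointwise norm $1$) of \emph{any} $d_\eta$-closed $1$-form representing it --- exactly in the spirit of the estimate on the flux class in the cohomological proof of Masur's criterion, Theorem~\ref{thm:Masur_criterion}.

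First I would use the computation at the beginning of Section~\ref{subsec:Twisted_Cohom} (equivalently, the first lemma of Section~\ref{ssec:Veech_crit}): since $Xu+2\pi\imath\lambda u=0$ in the weak sense, the $1$-form $u\,\im(h)$ satisfies $d_{h,\lambda}(u\,\im(h))=(Xu+2\pi\imath\lambda u)\,\text{\rm area}_h=0$, hence is $d_\eta$-closed for $\eta=\lambda\re(h)$. Closedness for $d_\eta$ involves only the de Rham differential and the real form $\re(h)$, not the complex structure; therefore, writing $(M_t,h_t)=g_t(M,h)$, the same form $u\,\im(h)$ is a $d_\eta$-closed representative of $g^\#_t(M,h,[\lambda\re(h)],[u\,\im(h)])$ on $(M_t,h_t)$ for every $t\in\R$.

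The norm estimate is then a short computation. By the second input,
$$
\Vert g^\#_t(M,h,[\lambda\re(h)],[u\,\im(h)])\Vert \;\le\; \Vert u\,\im(h)\Vert_{L^2(M_t,\,\text{\rm area}_{h_t})}\,.
$$
Under the Teichm\"uller flow one has $\re(h_t)=e^{-t}\re(h)$ and $\im(h_t)=e^{t}\im(h)$, with $\{\re(h_t),\im(h_t)\}$ an orthonormal coframe for the flat metric of $(M_t,h_t)$ and $\text{\rm area}_{h_t}=\re(h_t)\wedge\im(h_t)=\re(h)\wedge\im(h)=\text{\rm area}_h$. Hence $\im(h)=e^{-t}\im(h_t)$ has pointwise length $e^{-t}$ in the metric of $(M_t,h_t)$, so that
$$
\Vert u\,\im(h)\Vert^2_{L^2(M_t,\,\text{\rm area}_{h_t})} \;=\; e^{-2t}\int_M |u|^2\,\text{\rm area}_h \;=\; e^{-2t}\,\Vert u\Vert^2_{L^2(M,\,\text{\rm area}_h)}\,,
$$
and combining the two displays gives $\Vert g^\#_t(M,h,[\lambda\re(h)],[u\,\im(h)])\Vert\le e^{-t}\Vert u\Vert_{L^2(M,\text{\rm area}_h)}$, as claimed.

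The step I expect to require the most care is the first input: one must check that the twisted cocycle, being defined by parallel transport, does not re-select a twisted-holomorphic (or twisted-harmonic) representative as the surface moves, so that evaluating the Hodge norm on $(M_t,h_t)$ is legitimately free to use the fixed closed form $u\,\im(h)$; the decay then comes entirely from the elementary rescaling $\im(h)=e^{-t}\im(h_t)$. The second input is just the minimality of the twisted-harmonic representative in its class (together with the normalization that makes $\frac{\imath}{2}\int_M\alpha\wedge\bar\alpha=\Vert\alpha\Vert^2$ for twisted-holomorphic $\alpha=fh$ reduce to $\Vert f\Vert^2_{L^2}$), and needs nothing beyond the Hodge-theoretic setup of Sections~\ref{subsec:Twisted_Cohom}--\ref{ssec:Veech_crit}.
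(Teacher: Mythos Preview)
Your proof is correct and follows essentially the same route as the paper's. Both arguments reduce to the observation that under parallel transport the closed form $u\,\im(h)$ is rewritten on $(M_t,h_t)$ as $u_t\,\im(h_t)$ with $u_t=e^{-t}u$, and then bound the twisted Hodge norm of the class by the $L^2$ data of this representative. The paper does this explicitly via Lemma~\ref{lemma:Hodge_rep}: it projects $u_t$ orthogonally onto $\ker(\partial^\mp_{h_t,\lambda_t})$ to obtain the twisted-harmonic representative $\tfrac{1}{2\imath}(m^+_t h_t - m^-_t\bar h_t)$, and reads off $\Vert m^\pm_t\Vert_{L^2}\le\Vert u_t\Vert_{L^2}$ from orthogonality. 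Your version invokes exactly the same orthogonality abstractly, as the minimality of the twisted-harmonic representative in its $d_\eta$-class, which is indeed what underlies the paper's inequality. The only thing to be careful about is the normalization: the Hodge norm the paper uses on $H^1_\eta(M,\C)$ is the one coming from the $(1,0)/(0,1)$ decomposition of the twisted-harmonic representative (not the indefinite form $\tfrac{\imath}{2}\int\alpha\wedge\bar\alpha$ on an arbitrary closed representative, which in fact vanishes on $u\,\im(h)$), and this agrees with a constant multiple of the flat $L^2$ norm of the harmonic representative, so your minimization argument goes through.
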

\begin{proof}
Let $(M_t,h_t) = g_t (M,h)$. We recall that, by definition of the Teichm\"uller flow, we have, for all $t\in \R$, 
$$
h_t = e^{-t} \re (h) + \imath  e^t \im (h)\,,
$$
and by the definition of the cocycle we also have
$$
\begin{aligned}
g^\#_t (M, h, [\lambda \re(h)],  [u \im(h)] )  &=  (M_t, h_t, [e^t  \lambda \re(h_t)],  [e^{-t} u \im(h_t)] ) \\ &= 
(M_t,h_t,  [\lambda_t \re(h_t)],   [u_t  \im(h_t)] )   \,.
\end{aligned}
$$
Let us now write the decomposition of the function $u$ in formula~\eqref{eq:u_dec}  for $(M_t,h_t, \lambda_t \re(h_t))$: there exist functions $m^\pm_t \in \text{\rm ker} (\partial^\pm_{h_t, \lambda_t})$ and $v^\pm_t  \in H^1(M)$ such that 
$$
u_t = \partial^+_{h_t, \lambda_t} v^+_t + m^-_t= \partial^-_{h_t, \lambda_t} v^-_t  + m^+_t \,.
$$
By Lemma~\ref{lemma:Hodge_rep} we have that, for all $t\in \R$, 
$$
[u_t \im(h_t)]_{h_t, \lambda_t } =  \frac{1}{2\imath} [ m^+_t h_t - m^-_t \bar h_t ]_{h_t, \lambda_t} \,,
$$
hence by the definition of the Hodge norm and by orthogonality
$$
\begin{aligned}
\Vert g^\#_t (M, h, [\lambda \re(h)],  [u \im(h)] ) \Vert  &=  \frac{1}{2}( \Vert m^+_t \Vert_{L^2(M, \text{\rm area}_h)} ^2 
+  \Vert m^-_t \Vert_{L^2(M, \text{\rm area}_h)}^2)^{1/2} \\  &\leq  \Vert  u_t \Vert_{L^2(M, \text{\rm area}_h)} =
e^{-t}   \Vert  u \Vert_{L^2(M, \text{\rm area}_h)} \,.
\end{aligned}
$$
\end{proof} 

\noindent Let $\Lambda^\#: H^1_\kappa (M, \T) \to [0, 1]$ denote the function defined in  formula~\eqref{eq:Lambda_twist}.

\begin{lemma}  \label{lemma:bounded_lag}
Let    $\lambda \in \R\setminus \{0\}$ be such that $-2\pi \imath \lambda$  is an eigenvalue for the
horizontal flow of the translation surface  $(M,h)\in \mathcal H(\kappa)$ which has no continuous eigenfunction.  There exists a constant $C_{h, \lambda}>0$  such that, for all $t>0$, we have
$$
\int_0^t \Lambda^\# (g^{KZ}_s (M, h, [\lambda \re (h)]) ds \,\geq \,   t - C_{h, \lambda}\,, \quad \text{ for all } t\in \R^+.  
$$
\end{lemma}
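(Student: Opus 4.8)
The idea is to combine the exponential decay of the Hodge norm of the twisted class along the forward Teichm\"uller orbit (Lemma~\ref{lemma:exp_decay}) with the differential inequality governing the growth of the Hodge norm under the twisted cocycle (Lemma~\ref{lemma:spectral_gap_twist}), reading the latter \emph{backwards} to extract a lower bound on the integral of $\Lambda^\#$. Concretely, set $c = [u\,\im(h)]_{h,\lambda} \in H^1_{h,\lambda}(M,\C)$ and write $\eta = \lambda\,\re(h)$; since there is no continuous eigenfunction, the earlier lemma gives $c \neq 0$ (if $c=0$ then $u$ would be the vertical derivative of a continuous eigenfunction). Lemma~\ref{lemma:first_var_twist} yields the exact first variation of $t\mapsto \Vert g^\#_t(M,h,\eta,c)\Vert^2$, which (as in the proof of Lemma~\ref{lemma:spectral_gap}) translates into
$$
\frac{d}{dt}\log \Vert g^\#_t(M,h,\eta,c)\Vert = \frac{\re\, B_{g_t(M,h)}(c_t)}{\Vert g^\#_t(M,h,\eta,c)\Vert^2} \,\geq\, -\Lambda^\#\big(g^{KZ}_t(M,h,[\eta])\big),
$$
using $\vert B \vert \leq \Lambda^\# \Vert\cdot\Vert^2$ from the definition~\eqref{eq:Lambda_twist}. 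Integrating from $0$ to $t$ gives
$$
\log \Vert g^\#_t(M,h,\eta,c)\Vert \,\geq\, \log \Vert (M,h,\eta,c)\Vert - \int_0^t \Lambda^\#\big(g^{KZ}_s(M,h,[\eta])\big)\,ds.
$$

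The second ingredient is the upper bound from Lemma~\ref{lemma:exp_decay}: $\Vert g^\#_t(M,h,\eta,c)\Vert \leq \Vert u\Vert_{L^2}\, e^{-t}$. Combining the two displays,
$$
-t + \log\Vert u\Vert_{L^2} \,\geq\, \log\Vert(M,h,\eta,c)\Vert - \int_0^t \Lambda^\#\big(g^{KZ}_s(M,h,[\eta])\big)\,ds,
$$
which rearranges exactly to
$$
\int_0^t \Lambda^\#\big(g^{KZ}_s(M,h,[\eta])\big)\,ds \,\geq\, t - \Big(\log\Vert u\Vert_{L^2} - \log\Vert(M,h,\eta,c)\Vert\Big),
$$
so the lemma holds with $C_{h,\lambda} = \log\Vert u\Vert_{L^2(M,\text{area}_h)} - \log\Vert(M,h,[\lambda\re(h)],[u\,\im(h)])\Vert$, a finite constant precisely because $c\neq 0$ makes the Hodge norm of $c$ strictly positive.

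\textbf{Main obstacle.} The only delicate point is making sure the first-variation identity of Lemma~\ref{lemma:first_var_twist} can legitimately be differentiated along the whole orbit and produces a genuine \emph{differential inequality} at every time $t$ — i.e.\ that $t\mapsto \Vert g^\#_t\Vert^2$ is $C^1$ and that the twisted holomorphic representatives $m_{\pm\eta,t}$ vary smoothly — and, relatedly, that $\Lambda^\#$ as defined in~\eqref{eq:Lambda_twist} is the correct pointwise bound for $\vert\re B_{g_t(M,h)}(c_t)\vert/\Vert c_t\Vert^2$ along the orbit (here $c_t = g^\#_t(M,h,\eta,c)$). This is entirely parallel to the untwisted argument in Lemma~\ref{lemma:spectral_gap}, so I would simply cite that structure; the finiteness of $C_{h,\lambda}$ and the sign conventions (the cohomology class is $\lambda\,\re(h)$ and the cocycle scales it by $e^t$, matching the $h_t = e^{-t}\re(h)+\imath e^t\im(h)$ convention used in Lemma~\ref{lemma:exp_decay}) are the points to check carefully, but present no real difficulty.
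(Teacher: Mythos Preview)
Your proof is correct and follows essentially the same approach as the paper's: both combine the exponential decay bound of Lemma~\ref{lemma:exp_decay} with the Hodge-norm variational inequality coming from Lemma~\ref{lemma:first_var_twist}, and both use the non-vanishing of $[u\,\im(h)]_{h,\lambda}$ (guaranteed by the absence of a continuous eigenfunction) to extract a finite constant after taking logarithms. The only cosmetic difference is that the paper quotes Lemma~\ref{lemma:spectral_gap_twist} applied from time $t$ back to time $0$, whereas you integrate the lower branch of the differential inequality $\frac{d}{dt}\log\Vert g^\#_t\Vert \geq -\Lambda^\#$ directly; these are the same computation.
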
 
\begin{proof}  Let $u\in L^2(M, \text{\rm area}_h)$ denote an eigenfunction of the horizontal flow with eigenvalue
$-2\pi \imath \lambda$.  By Lemma~\ref{lemma:spectral_gap_twist}, we have
$$
\begin{aligned}
\Vert (M, h, [\lambda \re(h)],  [u \im(h)] ) \Vert & \leq  \Vert  g^{KZ}_t(h, [\lambda \re(h)],  [u \im(h)] ) \Vert 
\\&\times \exp \left (\int_0^t   \Lambda^\# ( g^{KZ}_{-s}  g^{KZ}_t (h, [\lambda \re(h)],  [u \im(h)] ) ) ds                   \right) \,,
\end{aligned}
$$
hence by Lemma~\ref{lemma:exp_decay} we have
$$
\begin{aligned}
\Vert (M, h, [\lambda \re(h)],  [u \im(h)] ) \Vert &\leq \Vert u\Vert_{L^2(M, \text{\rm area}_h)}   \\ 
&\times \exp \left ( -t + \int_0^t   \Lambda^\# (g_t (h, [\lambda \re(h)],  [u \im(h)] ) ) ds  \right) 
\end{aligned}
$$
Since by assumption $-2\pi\imath \lambda$ has no continuous eigenfunction, the twisted cohomology 
class $[u \im(h)]_{h, \lambda} \not =0$. The statement follows after taking logarithms in the above 
inequality.
\end{proof}

\begin{proof} [Proof of Theorem \ref{thm:criterion}] 
We argue by contraposition. Suppose there exists a sequence  $(t_n) \subset \R^+$ of positive times  such that 
$$
g^{KZ}_{t_n} (M, h, [\lambda \re(h)] ) \to   (M_0, h_0, [\eta_0]) \not\in H^1(M, \Z)\,. 
$$ 
Since by Lemma~\ref{lemma:Lambda_twist} the strict inequality $\Lambda^\# < 1$ holds on $H_\kappa(M, \R) \setminus H_\kappa(M, \Z)$ ,  and since by \cite{F22a}, Lemma~5.3, the function  $\Lambda^\#$ is continuous, there exists $\Lambda^\#_0<1$ and a  neighborhood $\mathcal U_0$ 
of $(M_0, h_0, [\eta_0])$ such that  $\Lambda^\#\vert \mathcal U_0 \leq \Lambda^\#_0$.   

\noindent Since the forward orbit $g_{\R^+} (M, h, [\lambda \re(h)] )$
spends an infinite time in $\mathcal U_0$, it follows that for any constant $C>0$ there exists $t_C>0$ such 
that for all $t\geq t_C$ we have 
$$
 \int_0^t   \Lambda^\# (g^{KZ}_t (h, [\lambda \re(h)],  [u \im(h)] ) ) ds   \leq t - C\,,
$$
thereby contradicting Lemma~\ref{lemma:bounded_lag}.  The criterion is therefore proved.
\end{proof}

\subsection{Weak mixing of  typical translation flows} 
\label{ssec:typical_wm} 

\noindent The proof of weak mixing for typical translation flows (and interval exchange transformations) is based on the Veech
criterion for weak mixing established in section \ref{ssec:Veech_crit}.   In this section we outline the main steps  of
the proof for translation flows following \cite{AvF07}.   The case of interval exchange transformations is harder and the
proof required a different strategy. 

\smallskip
\noindent The Veech criterion suggests the introduction of the notion of a subspace of  the real Hodge bundle which 
contains all translation flows with non-weakly mixing horizontal flow:

\begin{definition}  
\label{def:weak_stable}
The {\bf weak stable space}  $\mathcal W^s_{(M,h)} \subset H^1(M, \R)$ at a translation surface 
$(M,h) \subset \mathcal H^{(1)}_\kappa$ is the set  defined as follows. Let $\mathcal K$ be a (countable) exhaustion
of $\mathcal H^{(1)}_\kappa$ by compact subset. We then define
$$
\mathcal W^s_{(M,h)}  :=  \cap_{K \in \mathcal K }   \{   c \in H^1(M,\R) \vert   g^{KZ}_t (M, h, c) \to H^1_\kappa (M, \Z) \text{ as }  g_t(M, h) \in K\}\,.
$$
\end{definition} 

\noindent In other terms, after projection onto the toral Hodge bundle $H^1_\kappa (M, \T)$, the weak stable space consists of all 
cohomology classes which converge to the zero section of $H^1_\kappa (M, \T)$ under the Kontsevich--Zorich cocycle 
$g^{KZ}_\R$ as the orbit f the Teichm\"uller flow returns to a compact set. 

\smallskip
\noindent We list below the essential properties of the weak stable space. 

\noindent For almost all $(M,h)$ with respect to any $g_\R$-invariant
probability ergodic measure on $\mathcal H^{(1)}_\kappa$ we have:

\begin{itemize}
\item  $\mathcal W^s_{(M,h)}$ depends only on the forward $g_\R$ trajectory of $(M,h)$, hence it only depends on 
$[\im (h)] \in H^1(M,\Sigma_h, \R)$ which determines a stable leaf of the Teichm\"uller flow $g_\R$ 
\item $\mathcal W^s_{(M,h)}$ is saturated by translates of the stable Oseledets subspaces of  the Kontsevich--Zorich cocycle
and equals the union of all integer translates in case the forward $g_\R$-orbit of $(M,h)$ is relatively compact in $\mathcal H^{(1)}_\kappa$.
\end{itemize} 

\noindent We note that in general we do not expect the weak stable space to be contained in the union of integer translates of the Oseledets
central stable space since in the condition which defines the weak stable space we make no assumption on the behavior of the trajectory of the cohomology class during excursions of the forward Teichm\"uller trajectory $g_{\R^+} (M,h)$ outside a compact
subset of the moduli space.  It is therefore possible that the cohomology class converges to the set $H^1_\kappa (M, \Z)$ without
converging to any given element of it (that is, by jumping during excursion from one integer point to another). 

\smallskip

\noindent The key lemma on the weak stable space (which we state below without proof)  gives a bound on its dimension.

\begin{lemma}  (see \cite{AvF07} Th. A.1) For $\mu$-almost all (Oseledets regular) translation surface $(M, h) \in \mathcal H^{(1)}_\kappa$, with respect to any $g_\R$ ergodic probability measure,  the Hausdorff codimension $\text{\rm H-codim } (\mathcal W_{(M,h)})$ of the weak stable space is greater or equal than the dimension of the unstable Oseledets space $E^+_{(M,h)} \subset H^1(M, \R)$, that is, it satisfies the bound
$$
\text{\rm H-codim } (\mathcal W^s_{(M,h)})    \geq   \text{ \rm dim}  (E^+_{(M,h)} ) =  \sum_{\lambda_i^\mu >0} \text{ \rm dim}  (E_{(M,h)}(\lambda^\mu_i) )\,.
$$
\end{lemma}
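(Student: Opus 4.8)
The plan is to reduce the codimension estimate to a Hausdorff-dimension bound for the orbit of a single cohomology class under the Kontsevich--Zorich cocycle, and to extract that bound from the exponential expansion along the unstable Oseledets directions together with the fact that, at the renormalization times, the cocycle is unimodular and preserves the integral cohomology.

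First I would discretize. Replacing the Teichm\"uller flow by a discrete renormalization (Rauzy--Veech induction, or a Poincar\'e return map of $g_\R$ to a relatively compact piece of a fundamental domain), the Kontsevich--Zorich cocycle becomes a cocycle of matrices in $\operatorname{Sp}(2g,\Z)$; in particular each of the maps $g^{KZ}_{t}$ evaluated at a renormalization time $t$ is unimodular and preserves the lattice $H^1(M,\Z)\subset H^1(M,\R)$. Fix a compact set $K\in\mathcal K$ with $\mu(K)$ close to $1$. For $\mu$-a.e.\ Oseledets regular $(M,h)$ the ergodic theorem and the Oseledets theorem give: the forward Teichm\"uller orbit visits $K$ along renormalization times $t_1<t_2<\cdots$ with $t_n/n\to 1/\mu(K)=:c<\infty$, and $g^{KZ}_{t_n}$ expands the unstable Oseledets subspace $E^+_{(M,h)}$ exponentially, its singular values in the $E^+$-block being comparable to $e^{(\lambda^\mu_i+o(1))t_n}$ for the positive exponents $\lambda^\mu_i$. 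Write $\pi^+$ for the projection onto $E^+_{(M,h)}$ along the center-stable subspace $E^{cs}_{(M,h)}$, so that $\dim E^{cs}_{(M,h)}=2g-\dim E^+_{(M,h)}$.

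The core of the argument is then the claim that $\pi^+\!\big(\mathcal W^s_{(M,h)}\big)$ has Hausdorff dimension zero. Granting it, since $\mathcal W^s_{(M,h)}\subset(\pi^+)^{-1}\big(\pi^+\mathcal W^s_{(M,h)}\big)$, the standard product inequality for Hausdorff dimension yields $\dim_H\mathcal W^s_{(M,h)}\le 0+\dim E^{cs}_{(M,h)}=2g-\dim E^+_{(M,h)}$, which is exactly the asserted bound on the codimension. To prove the claim, I would use that $c\in\mathcal W^s_{(M,h)}$ forces, for every $\epsilon>0$, the orbit $g^{KZ}_{t_n}(M,h,c)$ to lie eventually within $\epsilon$ of $H^1(M,\Z)$. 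Pulling this constraint back by the (unimodular, lattice-preserving) cocycle and reading it off in the $E^+$-directions shows that $\pi^+(c)$ is trapped, from some step on, inside a nested Moran-type family of subsets of $E^+_{(M,h)}$: between consecutive renormalization times the cocycle stretches the relevant scales along $E^+_{(M,h)}$ by factors exponential in $t_{k+1}-t_k$ (with rates the positive Lyapunov exponents), while the pulled-back lattice acquires a correspondingly fine mesh in those directions, so that the $\epsilon$-neighbourhood of the lattice keeps only a definite $\epsilon$-dependent fraction of each surviving cell (the finitely many exceptional cells meeting the pulled-back lattice contribute only a countable, hence dimension-zero, set). The usual Moran/Besicovitch dimension computation for such a construction, combined with $t_n\sim cn$, bounds the Hausdorff dimension of the trapped set by a quantity tending to $0$ as $\epsilon\to 0$; this is precisely where the positive frequency of the returns (finiteness of $c$) enters, as it forces the $\log\epsilon$ terms to dominate the exponential ones. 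Intersecting over all $\epsilon>0$ gives $\dim_H\pi^+\mathcal W^s_{(M,h)}=0$.

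The step I expect to be the main obstacle is the non-uniform hyperbolicity of the Kontsevich--Zorich cocycle: during the excursions of the Teichm\"uller orbit outside $K$ between consecutive visits, the norm of $g^{KZ}_{t_{k+1}}\big(g^{KZ}_{t_k}\big)^{-1}$ is unbounded, so the ratios and branching numbers in the Moran construction fluctuate and can occasionally be very large. One has to show that long excursions are rare enough --- which comes down to the integrability of the induced return times, a consequence of the ergodic theorem --- and arrange the covering so that the contribution of the bad steps is absorbed into a Borel--Cantelli-type estimate without spoiling the dimension bound. This technical core is carried out in \cite{AvF07}, Appendix~A; once it is in place one concludes $\text{\rm H-codim}(\mathcal W^s_{(M,h)})\ge\dim E^+_{(M,h)}=\sum_{\lambda^\mu_i>0}\dim E_{(M,h)}(\lambda^\mu_i)$.
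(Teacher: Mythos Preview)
The paper does not prove this lemma: it is explicitly stated ``without proof'' and the reader is referred to \cite{AvF07}, Theorem~A.1. So there is no proof in the paper to compare against, and your sketch already goes further than the present text.

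That said, your outline is a fair summary of the \cite{AvF07} argument and the overall architecture is correct: discretize so that the cocycle acts by integral symplectic matrices, observe that membership in $\mathcal W^s_{(M,h)}$ forces the orbit at return times to lie in shrinking lattice-neighborhoods, pull these back by the expanding cocycle along $E^+_{(M,h)}$ to get a nested covering of Moran type, and conclude the dimension bound. Reducing to the claim $\dim_H \pi^+(\mathcal W^s_{(M,h)})=0$ and then using $\dim_H(S\times E^{cs})=\dim_H S+\dim E^{cs}$ is a clean way to organize the computation; in \cite{AvF07} the bookkeeping is done slightly differently (one bounds directly the dimension of $\{c:\operatorname{dist}(g^{KZ}_{t_n}c,H^1(M,\Z))\le\delta\ \forall n\ge n_0\}$ by $2g-\dim E^+ +o_\delta(1)$ and intersects over $\delta$), but the content is the same.

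You have correctly isolated the only genuine difficulty: between consecutive returns the transition matrix $g^{KZ}_{t_{k+1}}(g^{KZ}_{t_k})^{-1}$ can have arbitrarily large norm, so the branching number at step $k$ of the Moran construction (the number of lattice points that can be hit) is not uniformly bounded. What makes the argument go through is the integrability of the logarithm of this norm with respect to the induced return map, which gives $\frac{1}{n}\sum_{k\le n}\log\Vert g^{KZ}_{t_{k+1}}(g^{KZ}_{t_k})^{-1}\Vert\to$ a finite limit, so that the cumulative branching grows only exponentially and is beaten by the contraction $e^{-\lambda^\mu_i t_n}$ in each unstable direction once $\delta$ is small. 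Your appeal to ``integrability of the induced return times'' is the right instinct but not quite the right quantity; it is the log-integrability of the cocycle over the induced system (a standard consequence of the Oseledets hypotheses for $\mu$) that is used. With that correction your sketch matches the strategy of \cite{AvF07}, Appendix~A.
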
 

\noindent From the Veech criterion and from above dimension bound, we derive the following

\begin{theorem} \label{thm:lin_elimin}  (Linear elimination)  Let $\mu$ be any $g_\R$-invariant ergodic measure with a product structure
with respect to the invariant foliations of the Teichm\"uller flow and  with unstable dimension $d^u_\mu$ (in the absolute cohomology, that is, after the projection $H^1(M, \Sigma, \C) \to  H^1(M, \C)$ in period coordinates) satisfying the bound
$$
d^u_\mu  >    1 +  \text{ \rm codim}  (E^+_\mu) =  1+ 2g -  \sum_{\lambda_i^\mu >0} \text{ \rm dim}  (E_{(M,h)}(\lambda^\mu_i) )\,,
$$ 
then  for $\mu$-almost all $(M,h) \in \mathcal H^{(1)}_\kappa$ the horizontal translation flow $\phi^X_\R$ is weakly mixing.
In particular,  horizontal translation flow $\phi^X_\R$ is weakly mixing for the Masur--Veech typical translation surface.
\end{theorem}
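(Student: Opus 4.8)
The plan is to combine the cohomological Veech criterion (Theorem~\ref{thm:criterion}) with the Hausdorff-codimension bound on the weak stable space, via a Fubini-type transversality argument exploiting the product structure of $\mu$. First, I would reduce weak mixing to an avoidance statement about the weak stable space. Work outside the $\mu$-null set of $(M,h)$ that are not Oseledets regular, or whose forward Teichm\"uller orbit fails to be recurrent to compact sets, or for which $\phi^X_\R$ is not uniquely ergodic (the last handled by Masur's criterion, Theorem~\ref{thm:Masur_criterion}, and Poincar\'e recurrence). Suppose $\phi^X_\R$ is not weakly mixing at a remaining $(M,h)$: then there is a non-constant eigenfunction $u\in L^2(M,\text{\rm area}_h)$ with eigenvalue $-2\pi\imath\lambda$, and necessarily $\lambda\neq 0$, since $\lambda=0$ would make $u$ an invariant function, hence constant by unique ergodicity. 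If $u$ has a continuous version, $(M,h)$ factors over the circle; this locus is a countable union of proper $g_\R$-invariant closed subsets of $\mathcal H^{(1)}_\kappa$ and so is $\mu$-null for $\mu$ the Masur--Veech measure (and for any $\mu$ not supported on it), by ergodicity. Otherwise Theorem~\ref{thm:criterion} applies and every limit point of $g^{KZ}_t(M,h,[\lambda\re(h)])$, $t\to+\infty$, lies in $H^1_\kappa(M,\Z)$; in particular the same holds along returns to an arbitrary compact set, so $[\lambda\re(h)]\in\mathcal W^s_{(M,h)}$. Thus, off a $\mu$-null set, failure of weak mixing forces the line $\R\,[\re(h)]\subset H^1(M,\R)$ to meet $\mathcal W^s_{(M,h)}$ at a nonzero point, and it remains to prove that the set of such $(M,h)$ is $\mu$-null.

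Next, I would run the transversality argument. Since $\mathcal W^s_{(M,h)}$ depends only on the forward Teichm\"uller orbit, it is constant along the stable leaves of $g_\R$ (on which $[\im(h)]$ is fixed and $[\re(h)]$ varies). Choosing a countable cover of $\mathcal H^{(1)}_\kappa$ by local boxes $B\cong B^u\times B^s$ with $\mu|_B$ equivalent to a product measure, disintegrate over the unstable factor: for a.e.\ stable leaf, $W:=\mathcal W^s_{(M,h)}$ is a fixed subset of the fibre $H^1(M,\R)\cong\R^{2g}$ whose Hausdorff codimension is at least $\dim E^+_\mu$ by the key Lemma preceding the statement, so the cone $\R\cdot W$ has Hausdorff dimension at most $1+\mathrm{codim}(E^+_\mu)$. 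Along the stable slice the real-period assignment $(M',h')\mapsto[\re(h')]$ is a submersion whose image carries a measure absolutely continuous of dimension equal to $d^u_\mu$ (using, for $\mu$ with the time-reversal symmetry $g_t\mapsto g_{-t}$ available for Masur--Veech and affine measures, that stable and unstable dimensions coincide). Since $d^u_\mu>1+\mathrm{codim}(E^+_\mu)\geq\dim_H(\R\cdot W)$ by hypothesis, the preimage of $\R\cdot W$ has zero conditional measure; integrating over the unstable factor and summing over the cover yields that the bad set is $\mu$-null, hence $\phi^X_\R$ is weakly mixing $\mu$-almost everywhere.

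Finally, the ``in particular'' assertion amounts to checking that the Masur--Veech measure meets the dimension inequality: one uses the positivity of the Kontsevich--Zorich exponents (and their simplicity, Avila--Viana) to identify $\dim E^+_\mu=g$, together with the explicit Lyapunov spectrum of $Tg_\R$ recalled above to compute $d^u_\mu$; in the low-dimensional strata where the inequality is borderline one passes, as in \cite{AvF07}, to a sublocus (or a restricted Rauzy class) on which it holds and transports weak mixing back. The main obstacle, and the technical heart of the argument, is precisely this transversality step — making rigorous both that $\mathcal W^s_{(M,h)}$ is genuinely thin in the fibre (the Hausdorff-codimension Lemma, which in \cite{AvF07} rests on a careful analysis of the Oseledets filtration and of the excursions of the Teichm\"uller flow outside compact sets) and that the real-period parametrization pushes the conditional measure to something absolutely continuous and sufficiently transverse to the cone over $W$; in short, controlling the interaction between the fractal, non-linear geometry of the weak stable set and the affine geometry of period coordinates.
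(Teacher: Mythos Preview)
Your argument is essentially the paper's: invoke the Veech criterion (Theorem~\ref{thm:criterion}) to reduce to showing $\R[\re(h)]\cap\mathcal W^s_{(M,h)}=\{0\}$ for $\mu$-almost every $(M,h)$, bound $\dim_H(\R\cdot\mathcal W^s_{(M,h)})\leq 1+\operatorname{codim}(E^+_\mu)$ via the Hausdorff-codimension lemma, and conclude by a Fubini argument using the product structure of $\mu$.

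Two small corrections. First, the continuous-eigenfunction case need not be excised separately: if $u$ is a continuous eigenfunction of eigenvalue $-2\pi\imath\lambda$ then the surface factors over $\T$ via a map pulling $d\theta$ back to $\lambda\re(h)$, so $\lambda[\re(h)]\in H^1(M,\Z)\subset\mathcal W^s_{(M,h)}$ and the same transversality argument disposes of it. Second, for the Masur--Veech measure there is no ``borderline'' stratum requiring passage to a sublocus: one simply has $d^u_\mu=2g$ (the conditional is Lebesgue on all of $H^1(M,\R)$) and $\dim E^+_\mu=g$ by \cite{F02}, and $2g>g+1$ holds for every $g\geq 2$---the paper's check is one line. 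Your remark about restricted Rauzy classes conflates this with the harder IET case, which is a genuinely different problem (fixed $[\re(h)]$) not covered by linear elimination.
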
 

\begin{proof}  By the Veech criterion \cite{Ve84} (see also Theorem~\ref{thm:criterion})  if the horizontal flow is weakly mixing 
under the condition that 
$$
\R [ \re(h) ]  \not \in  \mathcal W^s_{(M,h)}   \Longleftrightarrow    [ \re(h) ]  \not \in \R \cdot   \mathcal W^s_{(M,h)}  \,.
$$
For all $(M,h)$ which is an Oseledets regular point for the Kontsevich--Zorich cocycle $g^{KZ}_\R$, the set $\R \cdot   
\mathcal W^s_{(M,h)}$ depends only on $[\im (h)]$ (which in turn determines the stable leaf of the Teichm\"uller flow).
In addition we have the dimension bound
$$
\text{\rm H-dim } (\R \cot \mathcal W^s_{(M,h)}) \leq 1+ \text{\rm H-dim } (\mathcal W^s_{(M,h)}) \leq  1+  \text{ \rm codim}  (E^+_\mu) < d^u_\mu\,.
$$
It follows that, since $\mu$ has a product structure $\mu = \mu_s \otimes \mu_u$, for  $\mu_s$-almost all $[\im(h)] 
\in H^1(M, \Sigma, \R)$ the translation surface $(M,h)$ is Oseledets regular for $g^{KZ}_\R$, and by the above
inequality it follows that, for $\mu_u$-almost all $[\re (h)] \in H^1(M, \Sigma, \R)$,  we have that $\R [\re(h)] \cap \mathcal W^s_{(M,h)} =\{0\}$, hence  the horizontal flow is weakly mixing. 

\smallskip
\noindent In the particular case of the Masur--Veech measure $\mu= \mu^{(1)}_\kappa$, it was proved in \cite{F02} (and later in \cite{AV07}) that $\text{ \rm dim}  (E^+_\mu) =  g$ (the genus). In addition, the Masur--Veech measure has a product structure
with dimension $d^u_\mu = 2g$. Since for $g \geq 2$ we have
$$
2g > 1+ 2g  - g = g+1\,,
$$
it follows that the Masur--Veech typical translation flow is weakly mixing. The argument is complete.
\end{proof} 

\begin{remark}  An analogous argument  applies to canonical invariant measures supported on $\SL(2, \R)$-invariant
orbifolds $\mathcal M$ of rank $r$ at least $2$. In fact, every  canonical invariant measure $\mu$ has a product structure and  the stable and unstable dimensions  $d^u_\mu= d^s_\mu =2r$.  Since $\mathcal M$ is $g_\R$-invariant the analysis can be restricted to the invariant subbundle $H^1_{\mathcal M}(M,\R) \subset H^1_{\kappa}(M,\R)$ given by the real part of the projection of the tangent bundle $T\mathcal M \subset H^1_\kappa(M, \Sigma, \C)$ (the complex relative cohomology bundle) onto $H^1_\kappa(M,  \C)$ (the  complex absolute cohomology bundle).  It is known that the Kontsevich--Zorich exponents are all non-zero on $H^1_{\mathcal M}(M,\R)$ (see \cite{F11} and \cite{Fi17}, Cor. 1.3),  which implies that $\text{ \rm dim}  (E^+_\mu) = r$.  A condition analogous to that of Theorem  \ref{thm:lin_elimin} is therefore verified if and only if 
$$
2r =d^u_\mu  > 1  + 2r -r = r+1 \Longleftrightarrow   r \geq 2\,.
$$
\end{remark}

\begin{remark} The case of $\SL(2,\R)$-invariant orbifolds of rank one is in general open with the important exception of 
closed $\SL(2,\R)$-orbits (Veech translation surfaces). Avila and Delecroix  \cite{AD16} have investigated in depths this case and proved in particular directionally typical weak mixing on all non-arithmetic Veech surfaces. These include surfaces coming by the
unfolding of billiards in regular polygons with at least 5 edges. 
\end{remark} 

\noindent We conclude this section with a few words on the proof of weak mixing for Interval Exchange Transformations (IET's). 
The problem is essentially equivalent to establishing weak mixing for translation flows $(M, h)$ with a {\it fixed}  class $\re(h) \in H^1(M, \R)$ (for almost all $\im(h) \in H^1(M, \R)$).  Indeed, an interval exchange transformation is equivalent to the (horizontal) translation flow obtained by suspension under  a constant roof function. The roof function determines the class $\re(h)$. The linear elimination procedure outlined above therefore fails. 

\noindent In fact, for many permutations (of rotation type)  the constant roof function determines a class in the relative cohomology
$H^1(M, \Sigma, \R)$ which has a non-trivial component on the kernel of the forgetful map $H^1(M, \Sigma, \R) \to H^1(M, \R)$
(with respect to the Oseledets decomposition). Since the extension of the Kontsevich--Zorich cocycle (or the Rauzy--Veech cocycle) to the kernel of the forgetful map is {\it isometric}, the Veech criterion immediately implies the weak mixing property for almost all
IET's, since by the isometric behavior of the cocycle non-integer vector cannot converges to the integer lattice under the cocycle.

\smallskip
\noindent By this {\it isometric elimination} scheme, outlined above, Veech \cite{Ve84} established the weak mixing property for Lebsgue almost all IET's, for all permutations or rotation class, which are not rotations. Permutations of rotation class are those equivalent to rotations (cyclic) permutations under Rauzy operations. The particular case of IET's on $3$-intervals (which are not rotations) was known earlier from the work of Katok and Stepin \cite{KS67}.

\smallskip 
\noindent  The completion of the proof of weak mixing for typical IET's (see \cite{AvF07}, \cite{AFS23}) is based on a non-linear, probabilistic elimination scheme. 

\smallskip
\noindent The {\it hyperbolic elimination} scheme analyzes the dynamics in $H^1(M, \mathbb R)$ modulo $H^1(M, \mathbb Z)$ of {\it lines segments} approximately aligned with the top unstable Oseledets subspace contained in balls of given (sufficiently) small radius
around integer vectors in the cohomology vector space.  There are two main effects:

\begin{enumerate}
\item {\it Repulsion}:  Under the hypothesis that the second Lyapunov exponent of the Kontsevich--Zorich cocycle $\lambda_2>0$  line segment are repelled away from the nearest integer point (under the action of the Kontsevich--Zorich cocycle); 
\item {\it Spreading}:  For sufficiently {\it long excursions} of the Teichm\"uller orbit, a line segment can get dilated and spread to several line segments near many different integer points (after intersection with balls centered at those points). 
\end{enumerate}
The strategy of the argument is to prove that the probability that a given segment remains inside a fixed neighborhood of the integer
lattice converges to zero as time diverges:  {the probability of escaping  elimination goes to zero with time.} This conclusion follows
from an estimate of the probability that the repulsion mechanism pushes a segment out of the given neighborhood against the probability that the segment ``survives'' by jumping near another lattice point during an excursion of the Teichm\"uller obit. The latter
is small since excursions of Teichm\"uller geodesics outside of the ``thick part'' of the moduli space are rare  (see \cite{At06} or 
\cite{AGY06} although the original argument in \cite{AvF07} bypasses such stronger results on excursions). 

\noindent The key underlying dynamical feature that makes the probabilistic argument possible are the mixing properties of the Rauzy--Veech induction (rather of the induced maps on compact sub-simplices)  or of the Teichm\"uller flow, which are exponentially mixing dynamical systems (see \cite{AvF07},  \cite{AGY06}, \cite{AB07}).

\subsection{Effective weak mixing for translation flows}
\label{ssec:typical_eff_wm} 

\noindent The quantitative theory of weak mixing for translation flows and Interval Exchange Transformations is based on an effective
version of the Veech criterion, which, as the Veech criterion in Theorem~\ref{thm:criterion},  can be derived by methods of twisted Hodge theory.

\begin{theorem} 
\label{thm:effective_criterion}
(Effective Veech Criterion) Assume that for some compact set $K \subset  \mathcal H^{(1)}_\kappa$ of positive measure and for
some   neighborhood $U \subset H^1_\kappa(M, \T)$ of the zero section  (or, equivalently, for some neighborhood  of $H^1_\kappa(M, \Z)$ in $H^1_\kappa(M, \R))$,  for a given $(M, h) \in \mathcal H^{(1)}_\kappa$  and $\lambda \in \R\setminus \{0\}$, we have 
$$
f_{K,U} := \limsup_{t>0} \frac{ \text{\rm Leb}( \{ s\in [0, t] \vert g_t(M,h) \in K \text{ and }   g^{KZ}_t (M, h,   \lambda [\re (h)]) \in U \})} {   \text{\rm Leb}( \{ s\in [0, t] \vert g_t(M,h) \in K\}) }    <1\,.
$$
Then there exist constants  $\alpha>0$  and $C(\lambda)>0$ such that, for  all weakly differentiable $f \in H^1(M)$ and for all
$(p,T) \in M\times \R^+$ (such that $x$ has an infinite forward horizontal orbit)   we have
$$
\left\vert \int_0^T  e^{2\pi\imath \lambda t} f \circ \phi^X_t (p) dt \right\vert  \leq C(\lambda) \Vert  f \Vert_{H^1(M)}   T^{1- \alpha}\,.
$$
\end{theorem}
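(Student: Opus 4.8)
The plan is to repeat, \emph{mutatis mutandis}, the argument for effective unique ergodicity of Theorem~\ref{thm:eff_UE_typ}, with the ordinary cohomology replaced by the twisted cohomology for the closed $1$-form $\eta:=\lambda\re(h)$ and the transfer cocycle replaced by its twisted counterpart on twisted $1$-currents. The first step is the identity, valid for all $f\in H^1(M)$ and all $(p,T)\in M\times\R^+$ with $p$ of infinite forward orbit,
$$
\int_0^T e^{2\pi\imath\lambda t}\,f\circ\phi^X_t(p)\,dt \;=\; \langle\, \gamma^X_T(p),\, f\,\re(h)\,\rangle_{h,\lambda}\,,
$$
where the right-hand side is the $d_{h,\lambda}$-twisted pairing obtained by weighting the integration along the orbit segment by the holonomy factor $e^{2\pi\imath\lambda t}$ of the flat connection $d_{h,\lambda}$ along horizontal trajectories (recall $\imath_X\re(h)=1$). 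Since $\Vert f\re(h)\Vert_{W^1(M,h)}$ is comparable to $\Vert f\Vert_{H^1(M)}$ and, by the Sobolev trace theorem, the twisted current $\gamma^X_T(p)$ belongs to the dual twisted Sobolev space $W^{-1}_{h,\lambda}(M)$ of $d_{h,\lambda}$-twisted $1$-currents, the theorem reduces to the estimate $\Vert\gamma^X_T(p)\Vert_{W^{-1}_{h,\lambda}(M)}\le C(\lambda)\,T^{1-\alpha}$.

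Next I would set up the twisted renormalization. Over the toral Hodge bundle $H^1_\kappa(M,\T)$ one forms the Hilbert bundle $\mathcal W^{-1}_\kappa$ with fiber $W^{-1}_{h,\eta}(M)$ over $(M,h,[\eta])$ and the twisted transfer cocycle $g^{\#,-1}_\R$ given by push-forward under the Teichm\"uller flow --- the twisted analogue of the transfer cocycle of Section~\ref{ssec:typical_eff_wm} and of the $\psi_\ast$-action of Section~\ref{subsec:selfsim}. The subbundle $\mathcal Z^{-1}_\kappa$ of $d_{h,\eta}$-closed currents is $g^{\#,-1}_\R$-invariant; its quotient is the twisted cohomology bundle, on which $g^{\#,-1}_\R$ induces the twisted cocycle $g^\#_\R$ of Lemma~\ref{lemma:first_var_twist}; and the subbundle of $d_{h,\eta}$-exact currents, which by the twisted elliptic estimate of \cite{F22a}, Prop.~3.2, consists of the $d_{h,\eta}$-derivatives of $L^2$ functions, carries the single Lyapunov exponent $0$ (with infinite multiplicity), since the area form is $g_\R$-invariant and $g^{\#,-1}_\R$ is therefore isometric on it in an equivalent norm. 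Combining this with the first-variation bound of Lemma~\ref{lemma:spectral_gap_twist} and with Lemma~\ref{lemma:Lambda_twist} ($\Lambda^\#$ continuous, and $\Lambda^\#<1$ off the zero section of $H^1_\kappa(M,\T)$) yields the twisted analogue of Lemma~\ref{lemma:spectral_gap_transfer}: under the hypothesis of the theorem there exist $C,\alpha>0$ such that $C^{-1}e^{-(1-\alpha)t}\Vert\gamma\Vert_{-1}\le\Vert g^{\#,-1}_t\gamma\Vert_{-1}\le C e^{(1-\alpha)t}\Vert\gamma\Vert_{-1}$ for all $d_{h,\eta}$-closed $\gamma$ and all $t>0$. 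The hypothesis $f_{K,U}<1$ enters exactly here: it forces the forward orbit of $(M,h,\lambda[\re(h)])$ under $g^{KZ}_\R$ to spend positive frequency in the compact region $\{(m,[\eta])\,:\,m\in K,\ [\eta]\notin U\}$, on which $\Lambda^\#\le\Lambda^\#_0<1$, so that (using also that $g_{\R^+}(M,h)$ returns to $K$ with positive frequency) $\limsup_{t\to\infty}\tfrac1t\int_0^t\Lambda^\#\big(g^{KZ}_s(M,h,\lambda[\re(h)])\big)\,ds\le 1-\alpha<1$.

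With the twisted spectral gap in hand, the bound on $\gamma^X_T(p)$ for arbitrary $T$ follows as in the proofs of Theorems~\ref{thm:eff_UE_selfsim} and~\ref{thm:eff_UE_typ}: choose return times $(t_n)$ of $g_{\R^+}(M,h)$ with $t_n/n\to f>0$, $g_{t_n}(M,h)\in K$ and $g^{KZ}_{t_n}(M,h,\lambda[\re(h)])\notin U$; for the first-return orbit segment $\gamma^X_{T_n}(p)$ of the horizontal flow to a unit vertical interval on $g_{t_n}(M,h)$ one has $e^{t_n}/C_K\le T_n\le C_K e^{t_n}$ and $\gamma^X_{T_n}(p)$ has bounded twisted $W^{-1}$-norm on $g_{t_n}(M,h)\in K$; transporting a $d_{h_{t_n},\eta_n}$-closed current of bounded norm back to $(M,h)$ costs at most $e^{(1-\alpha)t_n}$ by the spectral gap, and an Ostrowski-type expansion of a general time $T$ into the scales $T_n$ (the analogue of \eqref{eq:psi_action}, see \cite{F02}, Lemma~9.4) then gives $\Vert\gamma^X_T(p)\Vert_{W^{-1}_{h,\lambda}(M)}\le C(\lambda)\,T^{1-\alpha}$ up to a logarithmic factor that is absorbed into the exponent by slightly decreasing $\alpha$.

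The main obstacle, and the one genuinely new feature compared with the untwisted case, is the \emph{closing-up step}. For the ordinary transfer cocycle the loop obtained by joining $\gamma^X_{T_n}(p)$ to a short transverse arc is a closed current and the $\pm1$ tautological exponents have to be split off; for the twisted cocycle there are no tautological exponents --- which is what makes the \emph{whole} closed subbundle subexponentially expanding --- but the same loop carries a generically nontrivial $\eta_n$-holonomy, so its twisted integration current is \emph{not} $d_{h_{t_n},\eta_n}$-closed. One must therefore represent $\gamma^X_{T_n}(p)$, up to a current of uniformly bounded $W^{-1}$-norm on $g_{t_n}(M,h)$, by a $d_{h_{t_n},\eta_n}$-closed current before invoking the spectral gap; here the hypothesis that $g^{KZ}_{t_n}(M,h,\lambda[\re(h)])$ lies outside the fixed neighbourhood $U$ of the integer lattice is used again, to produce on $g_{t_n}(M,h)\in K$ a uniformly short cycle whose $\eta_n$-holonomy is bounded away from $1$, so that the holonomy defect can be cancelled at uniformly controlled cost --- equivalently, the defect is carried through the renormalization as an additional bounded error term, exactly like the terms $r_k,r'_k$ in the proof of Theorem~\ref{thm:eff_UE_selfsim} (cf.\ \eqref{eq:dec}--\eqref{eq:rem_bound2}). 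Verifying that this correction is possible uniformly, and that the Ostrowski expansion is compatible with the holonomy weights, is the technical heart of the argument, and it is carried out by the methods of twisted Hodge theory of \cite{F22a}.
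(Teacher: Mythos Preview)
Your overall architecture is correct and matches the paper's proof: express the twisted integral as a twisted $1$-current, show that it is at bounded $W^{-1}$-distance from a $d_{h,\lambda}$-closed current, transfer the problem to the twisted cohomology bundle where Lemma~\ref{lemma:spectral_gap_twist} and Lemma~\ref{lemma:Lambda_twist} give subexponential growth, and finish with an Ostrowski-type decomposition. The treatment of the exact subbundle via the $L^2$ norm of the primitive is also the same.

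Where you diverge from the paper is precisely at the step you flag as the ``main obstacle''. The paper does \emph{not} close up the orbit segment geometrically, and therefore never confronts a holonomy defect. Instead it computes the twisted boundary of $\gamma^X_{\lambda,T}(p)$ directly: for $f\in C^\infty(M)$,
\[
\gamma^X_{\lambda,T}(p)(d_{h,\lambda}f)=\int_0^T \frac{d}{dt}\bigl(e^{2\pi\imath\lambda t}f\circ\phi^X_t(p)\bigr)\,dt
= e^{2\pi\imath\lambda T}f\circ\phi^X_T(p)-f(p),
\]
so $\Vert d_{h,\lambda}\gamma^X_{\lambda,T}(p)\Vert_{W^{-2}(M,h)}\le 2$, and since $d_{h,\lambda}$ is elliptic its restriction to the orthogonal of $\mathcal Z^{-1}_\lambda(M,h)$ has bounded inverse, giving a uniform bound on the $W^{-1}$-distance from $\gamma^X_{\lambda,T}(p)$ to a closed current. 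No auxiliary cycle with controlled holonomy is needed, and your proposed cancellation scheme (adding integer multiples of a short loop whose $\eta_n$-period is bounded away from $\Z$) would not in general produce a loop of integer $\eta_n$-period anyway.

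Relatedly, you misplace the role of the hypothesis $f_{K,U}<1$. In the paper it enters \emph{only} in the integral bound on $\Lambda^\#$ (your middle paragraph has this right), and the return times $(t_k)$ used for the Ostrowski decomposition are simply returns to $K$, with no requirement that $g^{KZ}_{t_k}(M,h,\lambda[\re(h)])\notin U$. So your restriction of the return sequence to ``good'' times is unnecessary once the closing-up is done via ellipticity rather than geometrically.
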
 

\noindent The proof of the effective Veech criterion in Theorem~\ref{thm:effective_criterion} is completely analogous to  that of 
effective unique ergodicity in Theorem \ref{thm:eff_UE_selfsim} (self-similar case)  and Theorem \ref{thm:eff_UE_typ} (typical case), in fact it is a ``twisted'' version of it.  We outline this analogy and the resulting argument below. 

\begin{proof} [Outline of the Proof of Theorem~\ref{thm:effective_criterion}]   {\it First Step}. For a fixed $\lambda \in \R$ (including in the untwisted case $\lambda=0$), we write the twisted integrals in terms of $1$-dimensional currents given by the orbit segments.  We view orbit arcs  $\gamma^X_T(p)$ of the horizontal flow  $\phi^X_\R$ as $1$-dimensional currents defined as
$$
\gamma^X_T(p) (\alpha) := \int_0^T  e^{2\pi \imath \lambda t}   (\imath_X \alpha) \circ \phi^X_t (p) dt \,, \quad \text{for all smooth $1$-form } \alpha \in W^1(M,h)\,.
$$
Since $\imath_X \re(h) \equiv 1$, we have that 
$$
\gamma^X_{\lambda,T}(p) ( f \re h) =   \int_0^T  e^{2\pi \imath \lambda t}   f\circ \phi^X_t (p) dt  \,.
$$
We note that the current $\gamma^X_{\lambda,T}(p)$, as current of integration along a codimension $1$ submanifold belong to the
space $W^{-1}(M,h)$ by the Sobolev trace theorem.

\smallskip
\noindent {\it Second step}. We argue that such currents are at bounded distance from the space of currents which are closed with respect to the twisted differential $$d_{h, \lambda} =d + 2 \pi \imath \lambda \re (h) \wedge\,.$$  Indeed, we can compute the twisted differential of the current $\gamma^X_{\lambda,T}(p)$ as follows.  Let $f \in C^\infty(M)$, then we have
$$
\begin{aligned}
\vert \gamma^X_{\lambda,T}(p) (d_{h,\lambda}f) \vert & := \left\vert \int_0^T  e^{2\pi \imath \lambda t}   (\imath_X d_{h,\lambda}f) \circ \phi^X_t (p) dt \right\vert 
\\ &=  \left\vert   \int_0^T  e^{2\pi \imath \lambda t}   (Xf +2\pi \imath \lambda f  ) \circ \phi^X_t (p) dt  \right\vert  \\
&=  \left\vert   \int_0^T  \frac{d}{dt} \left( e^{2\pi \imath \lambda t}   f\circ \phi^X_t (p) \right) dt   \right\vert  \\ 
&=  \left\vert  e^{2\pi \imath \lambda T}   f\circ \phi^X_T(p)- f(p)  \right\vert  \leq  2 \Vert  f \Vert_{W^{2}(M,h)}  \,.
\end{aligned} 
$$
Thus,  $\Vert d_{h, \lambda} \gamma^X_{\lambda,T}(x) \Vert_{W^{-2}(M,h)}  \leq  2$, for all $(p,T)\in M\times \R^+$, and  since the operator
$d_{h,\lambda}$ is elliptic,  hence its restriction to the orthogonal of its kernel $\mathcal Z^{-1}_{\lambda}(M,h)$ has bounded inverse, it is possible to prove that there exists a constant $C(M) >0$ (uniformly bounded on compact subsets of the moduli space) such that 
$$
\inf_{z \in \mathcal Z^{-1}_{ \lambda}(M,h)}  \Vert   \gamma^X_{\lambda,T}(p)  -z \Vert_{W^{-1}(M,h)}  \leq  C(M)
$$
Since, as we have just proved, the current $ \gamma^X_{\lambda,T}(p)$ is at uniformly bounded distance from twisted-closed currents,
the argument is reduced to an estimate on the growth of the (Sobolev) norm of {\it twisted closed currents} under the restriction
of the ``transfer'' cocycle on the bundle $W^{-1}_\kappa (M)$ of currents of Sobolev order $-1$ to the sub-bundle 
$\mathcal Z^{-1}_{\kappa, \lambda}(M)$  of twisted closed currents.  

\smallskip
\noindent {\it Third step}.  The proof of bounds on the growth of the (Sobolev) norm  of order $-1$  of twisted  closed currents
can be reduced to the growth of the Hodge norm of twisted cohomology classes once we have prove that  there exists an invariant norm on the sub-bundle $\mathcal E^{-1}_{\kappa, \lambda}(M)$  of twisted exact currents.  The invariant norm is defined as
follows. By definition, a current $\gamma \in W^{-1} (M,h) $ is twisted exact if   $\gamma\in d_{h, \lambda}(L^2(M, \text{\rm area}_h)) $, hence we can define
$$
\Vert\!\vert  \gamma \vert\!\Vert_{h, \lambda}  :=  \Vert  U_{h, \lambda}  \Vert_{L^2(M, \text{\rm area}_h)}  
 \quad \text{ if }   \gamma = d_{h, \lambda} U_{h, \lambda} \in \mathcal E^{-1}_{\lambda}(M,h)   \,.
$$
{\it Fourth step} At this point, since we have reduced bounds on the currents $\gamma^X_{\lambda,T} (p)$ (in Sobolev norm)
to bounds on twisted cohomology classes (in Hodge norm), by Lemma \ref{lemma:spectral_gap_twist} we can derive the the following bound. We note that on compact sets of the moduli space, the twisted cohomology is finite dimensional,  the quotient
norm induced by the Sobolev norms are equivalent to the Hodge norm. 

\noindent Let $K\subset {\mathcal  H}^{(1)}_\kappa$ be any given compact set. For any $(M,h)$ there exists a constant $C_K (M,h)>1$ such that, for any $T= e^t$ such that $g_t(M,h) \in K$, for any $p\in M$ (with infinite horizontal forward orbit) we have 
\begin{equation}
\label{eq:twist_integral_bound} 
\Vert \gamma^X_{\lambda,T} (p) \Vert_{W^{-1}(M,h)}  \leq  C_K(M,h)  \exp \left (  \int_0^{\log T}  \Lambda^\# (g^{KZ}_s(M,h,\lambda[ \re(h)] ) ds \right)\,
\end{equation}
Let $K$ be a compact set of positive measure and $(M,h)$ be such that 
$$
\liminf_{t>0} \text{\rm Leb} \{ s\in [0,t ] \vert  g_s(M, h) \in K\}  = \mu_K >0\,.
$$
Since by Lemma~\ref{lemma:Lambda_twist}   there exists a constant $\Lambda^\#_{\rm max} <1$ such that 
$$
\Lambda ^\# (M, h, \eta)   \leq \Lambda^\#_{\rm max} <1 \,, \text { for all }  (M,h )\in K \text{ and }  (M, h , \eta) \not \in  U\,,
$$
it follows from the bound in formula~\eqref{eq:twist_integral_bound} that 
$$
\Vert \gamma^X_{\lambda,T} (p) \Vert_{W^{-1}(M,h)}  \leq C_K(M,h) \exp\left ( 1- \mu_K  (1-f_{K,U})  (1- \Lambda^\#_{\rm max}) ) \log T \right) \,,
$$
that is, for $\zeta:= \mu_K (1-f_{ U})(1- \Lambda^\#)   >0$, 
we have 
\begin{equation}
\label{eq:bound_best_returns}
\Vert \gamma^X_{\lambda,T} (p) \Vert_{W^{-1}(M,h)}  \leq C_K(M,h)  T^{1-\zeta} \,.
\end{equation} 
Finally, let $(T_k)= (e^{t_k})$ with $t_0=0$ and$(t_k)$  for $k\geq 1$ a sequence of return times of the forward orbit $g_{\R^+} (M,h)$  to the compact set $K \subset \mathcal H^{(1)}_\kappa$ such that 
$$
\lim_{k \to \infty}  \frac{t_k}{k}  =  \mu_K >0\,.
$$
{\it Fifth Step}. The argument is completed by decomposing  any horizontal orbit of length $T>0$ into orbits arcs of lengths which belong to the sequence  $(T_k)$ (to which the bound \eqref{eq:bound_best_returns} can be applied). In fact for any $T >1$ we have the (Ostrowski's) decomposition 
$$
T = \sum_{k=0}^N   m_k  T_k  
$$
with $T_N \leq T \leq T_{N+1}$ and $m_k T_k \leq T_{k+1}$, for all $k <N$, hence by the bound in 
formula~\eqref{eq:bound_best_returns} we derive
$$
\Vert \gamma^X_{\lambda,T} (p) \Vert_{W^{-1}(M,h)}  \leq C_K(M,h)  \sum_{k=1}^N  m_k T_k^{1-\zeta} \,.
$$
For every $\epsilon \in (0, \mu_K)$ there exists $k_\epsilon \in \N$ such that 
$$
(\mu_K-\epsilon) k  \leq t_k  \leq   (\mu_K+\epsilon) k \,, \quad \text{ for all } k\geq k_\epsilon\,,
$$
hence there exists a constant $C_{K, \epsilon} >1$ such that 
$$
m_k \leq \frac{T_{k+1} }{T_k} =e^{t_{k+1} -t_k}  \leq     C_{K, \epsilon}  e^{2 \epsilon k}\,.
$$
There exists therefore a constant $C_{K, \epsilon} (\zeta) >1$ such that
$$
\begin{aligned} 
 \sum_{k=1}^N  m_k T_k^{1-\zeta}  &\leq  C_{K, \epsilon}   \sum_{k=1}^N  e^{ [(1-\zeta)(\mu_K+ \epsilon)  +2\epsilon] k} 
\\ & \leq  C_{K, \epsilon}(\zeta)  e^{ [(1-\zeta)(\mu_K+ \epsilon)  +2\epsilon] N}
 \leq   C_{K, \epsilon} (\zeta) T^{ 1-\alpha} 
\end{aligned} 
$$
with 
$$
\alpha = 1- (1-\zeta)\frac{\mu_K+ \epsilon}{\mu_K-\epsilon}   +2\epsilon\,.
$$
Since there exists  $\epsilon>0$ such that in the above formula $\alpha >0$, the argument is complete
(in fact, the bound in the statement  holds for any  exponent $\alpha <\zeta$).
\end{proof} 

\noindent The proof of (Masur--Veech) typical {\it effective} weak mixing of translation flows is completed by a linear approximation argument analogous to that outline for proof of typical weak mixing.  Namely, motivated by the effective Veech criterion (Theorem~\ref{thm:effective_criterion}),  in analogy with Definition \ref{def:weak_stable}, we introduce the effective weak stable space:

\begin{definition}  
\label{def:eff_weak_stable}
The {\bf effective weak stable space}  $\mathcal W^{eff}_{(M,h)} \subset H^1(M, \R)$ at a translation surface 
$(M,h) \subset \mathcal H^{(1)}_\kappa$ is the set  defined as follows. 

\noindent Let $\mathcal K$ be a countable exhaustion of the
moduli space $\mathcal H^{(1)}_\kappa$ by compact subsets and let  $\mathcal U$ a countable basis of 
neighborhoods of  $H^1_\kappa(M, \Z) \subset H^1_\kappa(M, \R)$. We then define 
$$
\begin{aligned}
&\mathcal W^{eff}_{(M,h)}  :=  \cap_{K \in \mathcal K } \cap_{U \in \mathcal U}   \{   c \in H^1(M,\R) \vert  \\
&\quad \limsup_{t>0}  \frac{ \text{\rm Leb}( \{ s\in [0, t] \vert g_t(M,h) \in K \text{ and }   g^{KZ}_t (M, h,   \lambda [\re (h)]) \in  U  \})}
 {  \text{\rm Leb}( \{ s\in [0, t] \vert   g_t (M, h) \in  K\})  }    =1   \}\,.
\end{aligned} 
$$
\end{definition} 

\noindent In other terms, in the definition of the effective stable space {\it the convergence takes place  only in 
average} and excursion of the cohomology class outside of any given neighborhood of the integer lattice are possible provided 
their frequency converges to zero in the large time limit.

\noindent Despite this importance difference, all the basic properties of the stable space (in particular concerning its Hausdorff
dimension) hold for the effective weak stable space, and the linear elimination procedure outlined in section \ref{ssec:typical_wm}
can be carried out (see \cite{F22a}), as well as the non-linear, probabilistic elimination (see \cite{AFS23}).

\smallskip
\noindent By the effective Veech criterion (Theorem \ref{thm:effective_criterion} and by the  linear elimination argument we can then derive the following:
\begin{theorem} \cite{F22a} 
For each stratum $\mathcal H^{(1)}_\kappa$ of translation surfaces there exists a constant $\alpha_\kappa>0$
  such that, for the Masur--Veech typical  translation surface $(M,h) \in \mathcal H^{(1)}_\kappa$ and for all 
  $\lambda \in \R$ there exists a constant  $K_h(\lambda) >0$  such that  the horrizontal translation flow satisfies the following effective weak mixing estimate . For all functions 
  $f \in H^1(M)$  (the Sobolev space of functions with square integrable first weak derivative) of zero average, and for all 
  $(p,T) \in M \times\R^+$ such that $p$ has infinite forward orbit, we have
  $$
  \Big \vert   \int_0^T  e^{2\pi \imath \lambda t}   f \circ \phi^{X}_t (p)  dt   \Big\vert   \leq  K_h(\lambda) T^{1-\alpha_\kappa}\,.
  $$
\end{theorem}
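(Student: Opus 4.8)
The plan is to derive this theorem from the effective Veech criterion (Theorem~\ref{thm:effective_criterion}) by a \emph{linear elimination} argument exactly parallel to the proof of Theorem~\ref{thm:lin_elimin}, but carried out with the effective weak stable space of Definition~\ref{def:eff_weak_stable} in place of the weak stable space of Definition~\ref{def:weak_stable}. The case $\lambda=0$ is precisely the effective unique ergodicity Theorem~\ref{thm:eff_erg} (equivalently Theorem~\ref{thm:eff_UE_typ}), which already supplies a stratum exponent and can be kept aside; so from now on fix $\lambda\in\R\setminus\{0\}$.

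First I would record that, by Theorem~\ref{thm:effective_criterion}, the desired polynomial bound on the twisted integrals (with $K_h(\lambda)$ absorbing $\Vert f\Vert_{H^1(M)}$) holds at $(M,h)$ for the parameter $\lambda$ as soon as one produces a compact set $K\subset\mathcal H^{(1)}_\kappa$ of positive Masur--Veech measure and a neighborhood $U$ of the zero section of $H^1_\kappa(M,\T)$ for which the average escape frequency $f_{K,U}$ is strictly less than $1$; the resulting exponent $\alpha$ then depends only on the return frequency $\mu_K$, on $f_{K,U}$, and on $\Lambda^\#_{\max}:=\max\{\Lambda^\#(M,h,\eta):(M,h)\in K,\ (M,h,\eta)\notin U\}<1$ (Lemma~\ref{lemma:Lambda_twist}). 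Hence the theorem reduces to the statement that for Masur--Veech almost every $(M,h)$ and every $\lambda\neq0$ one has $\lambda[\re(h)]\notin\R\cdot\mathcal W^{eff}_{(M,h)}$, equivalently $[\re(h)]\notin\R\cdot\mathcal W^{eff}_{(M,h)}$; fixing $K$ and $U$ once and for all, a quantitative recurrence argument upgrades ``$f_{K,U}<1$ almost everywhere'' to a uniform gap $f_{K,U}\le f_0<1$, which together with the $\lambda=0$ exponent produces the single stratum constant $\alpha_\kappa$.

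Next I would prove the dimension bound for the effective weak stable space: for the Masur--Veech measure $\mu$ and $\mu$-almost every Oseledets-regular $(M,h)$,
\[
\text{H-codim}\big(\mathcal W^{eff}_{(M,h)}\big)\ \geq\ \dim E^+_{(M,h)}\ =\ g
\]
by \cite{F02} (see also \cite{AV07}), so that $\text{H-dim}(\mathcal W^{eff}_{(M,h)})\le g$ and $\text{H-dim}(\R\cdot\mathcal W^{eff}_{(M,h)})\le g+1$. Since $\mathcal W^{eff}_{(M,h)}$ depends only on the forward Teichm\"uller trajectory, hence only on $[\im(h)]$, and since $\mu$ has a product structure $\mu=\mu_s\otimes\mu_u$ with unstable dimension $d^u_\mu=2g$, for $\mu_s$-almost every $[\im(h)]$ the exceptional set $\R\cdot\mathcal W^{eff}_{(M,h)}$ has Hausdorff dimension $\le g+1<2g=d^u_\mu$; therefore for $\mu_u$-almost every $[\re(h)]$ one gets $\R[\re(h)]\cap\mathcal W^{eff}_{(M,h)}=\{0\}$, which is the genericity statement needed in the previous paragraph. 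The strict inequality $2g>g+1$ uses $g\geq2$, the hypothesis of the theorem.

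The hard part will be the dimension bound for $\mathcal W^{eff}_{(M,h)}$, i.e.\ the effective analogue of the key lemma of \cite{AvF07} (Theorem~A.1). The difficulty is exactly the relaxation built into Definition~\ref{def:eff_weak_stable}: a cohomology class lies in $\mathcal W^{eff}_{(M,h)}$ if it returns to a neighborhood of $H^1_\kappa(M,\Z)$ only with asymptotic frequency $1$, so excursions of the class away from the integer lattice during long excursions of the Teichm\"uller geodesic into the thin part of $\mathcal H^{(1)}_\kappa$ are allowed, provided they are rare. To bound this larger set I would combine (i) a \emph{repulsion} estimate, valid under the spectral gap $\lambda^\mu_2>0$ (available for the Masur--Veech measure by \cite{F02}), showing that near any integer point the Kontsevich--Zorich cocycle expands the component of a cohomology class transverse to the lattice along the unstable Oseledets directions, with (ii) the \emph{rarity of long Teichm\"uller excursions} outside the thick part, in the quantitative form of \cite{At06} (or \cite{AGY06}), which bounds the Lebesgue measure of the times spent far from $K$ and forces the codimension count; feeding these into a Borel--Cantelli / dimension-counting argument as in \cite{AvF07} and \cite{F22a} yields the stated Hausdorff codimension bound. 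Everything else is a transcription of the untwisted effective unique ergodicity argument (Theorem~\ref{thm:eff_UE_selfsim}, Theorem~\ref{thm:eff_UE_typ}) and of the non-effective weak mixing argument of Section~\ref{ssec:typical_wm}, so I expect (i)--(ii) and their combination to be the only genuinely technical point.
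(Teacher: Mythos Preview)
Your proposal is correct and follows essentially the same approach as the paper: the paper derives this theorem from the effective Veech criterion (Theorem~\ref{thm:effective_criterion}) together with the linear elimination argument applied to the effective weak stable space of Definition~\ref{def:eff_weak_stable}, exactly as you outline, deferring the Hausdorff codimension bound for $\mathcal W^{eff}_{(M,h)}$ to \cite{F22a}. Your identification of the dimension bound as the only genuinely technical step, and of the product structure plus $2g>g+1$ arithmetic as the conclusion, matches the paper's treatment; the one minor remark is that the ``repulsion/spreading'' dichotomy you invoke is what the paper reserves for the probabilistic (IET) elimination, whereas the linear elimination needed here proceeds by a more direct Hausdorff-dimension covering argument in the style of \cite{AvF07}, Theorem~A.1.
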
 
\noindent The above theorem implies the last equivalent property in Definition~\ref{def:wm_eff} for the space $W^1(M)$, hence it concludes (see also Exercise~\ref{ex:wm_eff}) the outline of the proof of polynomial weak mixing for typical translation flows stated in Theorem~\ref{thm:weak_mixing_eff}. For the proof of effective weak mixing for IET's, based on the effective Veech criterion and the probabilistic elimination procedure, we refer the reader to \cite{AFS23}.

\end{document}